\numberwithin{equation}{section}
\def\XXint#1#2#3{{\setbox0=\hbox{$#1{#2#3}{\int}$}
\vcenter{\hbox{$#2#3$}}\kern-.5\wd0}}
\newcommand{\R}{\mathbb{R}}
\newcommand{\C}{\mathbb{C}}
\newcommand{\Z}{\mathbb{Z}}
\newcommand{\N}{\mathbb{N}}
\newcommand{\I}{\mathbb{I}}
\newcommand{\qut}[1]{\textquotedblleft #1\textquotedblright}
\newcommand{\be}{\begin{equation}}
\newcommand{\eq}[2]{\begin{center}\begin{equation}\begin{split}#1\end{split}#2\end{equation}\end{center}}
\newcommand{\eqn}[2]{\begin{equation}\begin{split}#1\end{split}\notag#2\end{equation}}
\newcommand{\ee}{\end{equation}}
\newcommand{\ep}{\varepsilon}
\newcommand{\mat}[1]{\begin{pmatrix}#1\end{pmatrix}}
\newcommand{\imply}[1]{\Rightarrow}
\newcommand{\eps}{\varepsilon}
\newcommand{\pd}[2]{\frac{\partial #1}{\partial #2}}
\newcommand{\vt}{\vartheta}
\newcommand{\RM}{\mathbb{R}}
\newcommand{\cn}{\operatorname{cn}}
\newcommand{\bap}{\begin{app}}
\newcommand{\eap}{\end{app}}
\newcommand{\begs}{\begin{exams}}
\newcommand{\eegs}{\end{exams}}
\newcommand{\beg}{\begin{example}}
\newcommand{\eeg}{\end{exaplem}}
\newcommand{\bpr}{\begin{proposition}}
\newcommand{\epr}{\end{proposition}}
\newcommand{\bt}{\begin{theorem}}
\newcommand{\et}{\end{theorem}}
\newcommand{\bc}{\begin{corollary}}
\newcommand{\ec}{\end{corollary}}
\newcommand{\bl}{\begin{lemma}}
\newcommand{\el}{\end{lemma}}
\newcommand{\bd}{\begin{definition}}
\newcommand{\ed}{\end{definition}}
\newcommand{\brs}{\begin{remarks}}
\newcommand{\ers}{\end{remarks}}
\newcommand{\kStableLeft}{0.9426}
\newcommand{\kUnstableLowerLeft}{0.199910210210210}
\newcommand{\kUnstableLowerRight}{0.942197747747748}
\newcommand{\kUnstableUpperRight}{0.999999999997}
\newcommand{\XUnstableLowerLeft}{6.28\ }
\newcommand{\XUnstableLowerRight}{8.44 }
\newcommand{\XUnstableUpperRight}{48.3\ }
\newtheorem{theorem}{Theorem}[section]
\newtheorem{proposition}[theorem]{Proposition}
\newtheorem{corollary}[theorem]{Corollary}
\newtheorem{lemma}[theorem]{Lemma}
\newtheorem{definition}[theorem]{Definition}
\newtheorem{example}[theorem]{Example}
\newtheorem{remark}[theorem]{Remark}
\title{ Numerical proof of stability of roll waves in the small-amplitude limit for inclined thin film flow
}
\author{\sc \small Blake Barker}
\thanks{Indiana University, Bloomington, IN 47405;
bhbarker@indiana.edu: Research of B.B. was partially supported
under NSF grants no. DMS-0300487 and DMS-0801745 and the College of Arts and Sciences Dissertation Year Fellowship.}
\begin{document}

\maketitle

%%%%%%%%%%%%%%%%%%%%%%%%%%%%%%%%%%%%%%%%%%%%%%%%%%%%%%%%%%%%%%%%
% Abstract
%%%%%%%%%%%%%%%%%%%%%%%%%%%%%%%%%%%%%%%%%%%%%%%%%%%%%%%%%%%%%%%%

\begin{abstract}
We present a rigorous numerical proof based on interval arithmetic computations categorizing 
the linearized and nonlinear stability of periodic viscous roll waves of the KdV-KS equation modeling weakly unstable flow
of a thin fluid film on an incline in the small-amplitude KdV limit.
The argument proceeds by verification of a stability condition derived by
Bar-Nepomnyashchy and Johnson-Noble-Rodrigues-Zumbrun involving inner products of various elliptic functions arising through the KdV equation.
One  key point in the analysis is a bootstrap argument balancing the extremely poor sup norm bounds for these functions against  the extremely good convergence properties for analytic interpolation in order to obtain a feasible computation time.
Another is the way of handling analytic interpolation in several variables by a two-step process carving up the parameter space into manageable pieces for rigorous evaluation.
These and other general aspects of the analysis should serve as blueprints for more general analyses of spectral stability.
\end{abstract}

\tableofcontents

%%%%%%%%%%%%%%%%%%%%%%%%%%%%%%%%%%%%%%%%%%%%%%%%%%%%%%%%%%%%%%%%
% Introduction
%%%%%%%%%%%%%%%%%%%%%%%%%%%%%%%%%%%%%%%%%%%%%%%%%%%%%%%%%%%%%%%%
\section{Introduction}

In this paper we study by a rigorous analytical and numerical investigation the spectral stability of periodic wave train solutions of the 
%CHANGEDkz:
%Kuramoto-Sivashinsky equation (KS),
Korteweg-de Vries--Kuramoto-Sivashinsky equation (KdV-KS),
%ENDCHANGED
%
\begin{equation}
u_t + (u^2/2)_x + \eps u_{xxx} + \delta(u_{xx}+u_{xxxx})=0,\quad \eps^2+\delta^2 = 1,\quad t>0,\ x\in \R,
\label{gks}
\end{equation}
in the limit $\xi\neq 0, \ \delta \to 0$. The 
%CHANGEDkz
%KS equation 
KdV-KS equation 
%ENDCHANGED
has been used to model a wide variety of phenomena including pattern formation and hydrodynamic instability \cite{S1,S2}. For $1\gg \delta\sim \sqrt{F-2}>0$, \eqref{gks} can be derived with formal asymptotics from the St. Venant shallow water equations \cite{YY},
$
h_t+(hu)_x=0,\ (hu)_t+(hu^2+h^2/2F^2)_x=h-u^2+\nu(hu_x)_x,
$
as the Froude number $F\to 2^+$, which is significant since constant solutions are unstable for $F >2$. Alternatively, \eqref{gks} may be derived with formal asymptotics from the full Navier-Stokes (NS) equations \cite{W} as the Nusselt number goes to 0, that is for $0<R-R_c\ll 1$, where Nusselt flows are unstable for $R$ greater than the critical Reynolds number $R_c$. In these limits, the period scales as $1/\delta$ and the amplitude as $\delta^2$ so that instabilities are of small-amplitude long wave type.

When $\delta = 0$, \eqref{gks} is the Korteweg-de Vries (KdV) equation which is a Hamiltonian system. In particular, \eqref{gks} is a singular perturbation of the KdV equation.

Analytical and numerical studies \cite{BJNRZ1,BN,CDK,JNRZ} indicate that a band of stable periodic traveling-wave solutions of \eqref{gks} continues from the classical KS limit ($\eps = 0$) to the KdV limit; see Figure \ref{islands}. 

In \cite{BN} the authors find that the stability band in the limit $\delta \to 0$ is given by $[X_l,X_u]$ where 
\begin{equation}
X_l\approx \frac{2\pi}{0.744\pm 0.001}\approx 8.44\pm 0.01\ \mathrm{and} \ X_u \approx \frac{2\pi}{0.239\pm 0.001}\approx 26.29\pm 0.11.
\label{boundaries}
\end{equation}
Similar results were obtained in \cite{BJNRZ1,CDK}; see Figure \ref{islands}.  In the $\delta \to 0$ limit the Evans function computations used in \cite{BJNRZ1} become more demanding due to it being a singular limit and are not readily accessible to direct numerical computation. 
%CHANGEDkz:
%This limit is the one of main importance since it governs \qut{weakly unstable behavior} \cite{BN,N}.
This limit is of particular interest as the one
governing canonical \qut{weakly unstable} behavior \cite{BN,N}, in the
sense that the lowest-order term in the associated perturbation expansion 
(corresponding to the coefficient of the second-order derivative term) vanishes.
%ENDCHANGEDkz:
\begin{center}
\begin{figure}[htbp] \label{islands}
$\begin{array}{cc}
(a) \includegraphics[scale=.3]{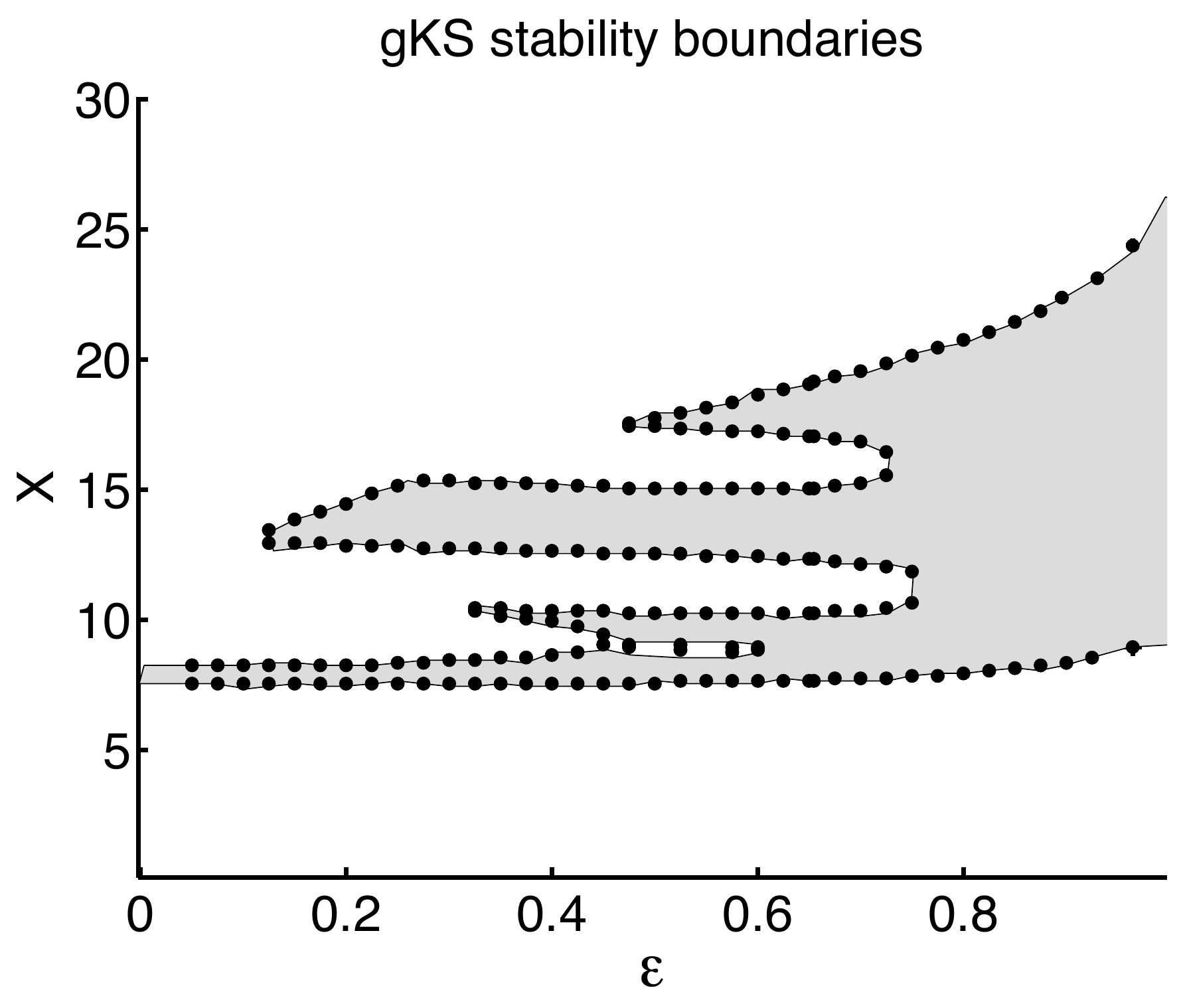} & (b) \includegraphics[scale=0.3]{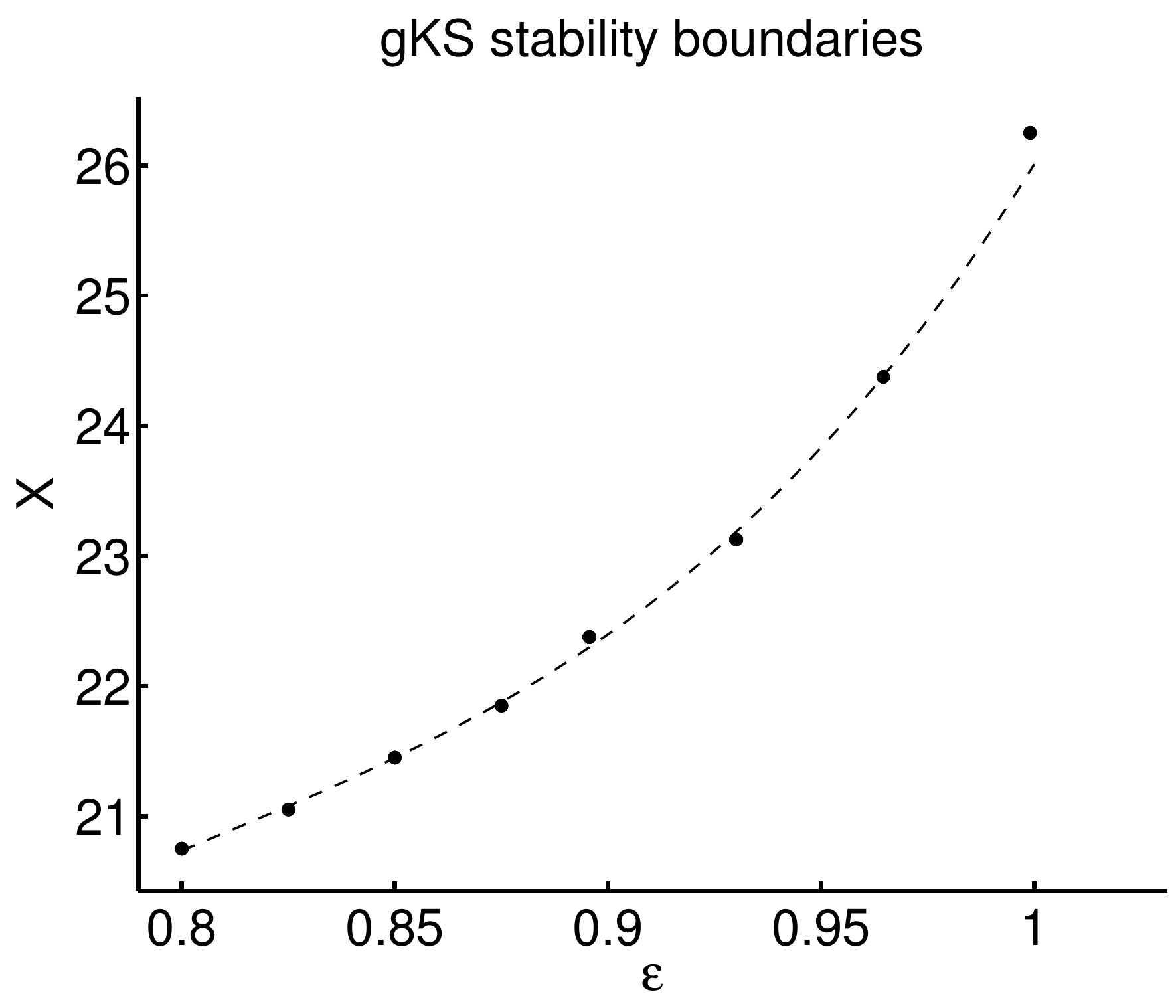}
	%(a) \includegraphics[scale=.4]{}& (b)  \includegraphics[scale=.2]{}
  \end{array}$
\caption{(a) Reprint from \cite{BJNRZ1}. We plot in gray the period $X$ against $\eps$ corresponding to stable traveling-wave solutions of \eqref{gks}. As $\eps \to 1$, $\delta \to 0$ corresponding to the KdV limit. In this figure, stability was determined using the Evans function which does not involve a singular perturbation. (b) A zoomed in picture of (a). The dashed line plots the best cubic least squares fit for data points $0.8< \eps < 0.98$. The cubic fit predicts a stability transition at $X = 26.01$ when $\eps = 1$. To find the data points, a bisection method was used on the Evans function in the variable $X$ with a relative error bound of $10^{-2}$ as stopping criteria.   }
\end{figure}
\end{center}

\subsection{Background} We begin by reviewing some relevant results.

\subsubsection{Diffusive spectral stability conditions}
Let $u(x,t) = \bar u(x-ct)$ be a spatially periodic solution of \eqref{gks} with period X. By Galilean invariance, we may take $c=0$. Define $F(u):= u_t + (u^2/2)_x+\eps u_{xxx}+\delta(u_{xx}+u_{xxxx})$ and let $u(x,t)= \bar u(x)+ \tilde v(x,t)$ be a solution to \eqref{gks} where $\tilde v(\cdot,t)\in L^2(\R)$. Taking the G\^{a}teaux differential of $F(\cdot)$ at $\bar u$ in the direction $\tilde v(\cdot,\cdot)$ yields the linearized equation $\tilde v_t +(\bar u \tilde v)_x + \eps \tilde v_{xxx} + \delta(\tilde v_{xx}+\tilde v_{xxxx})$. Then substituting the separated solution ansatz $\tilde v(x,t)= e^{\lambda t}v(x)$ into the linearized equation gives the eigenvalue problem, 
\begin{equation}
\lambda v =  -(\bar u v)_x - \eps v_{xxx} - \delta(v_{xx}+v_{xxxx})=0. 
\label{evp}
\end{equation} 
Suppose $\lambda$ is an eigenvalue of \eqref{evp} and $v(\cdot)$ is the corresponding  eigenfunction. Now $v(\cdot)$ has the Fourier transform representation 
\begin{equation}
v(x) = \frac{1}{2\pi}\int_{-\infty}^{\infty} e^{i\xi x}\hat v(\xi)d\xi= \sum_{j\in \Z} \frac{1}{2\pi}\int_{-\pi}^{\pi}e^{i(\xi+2\pi j)x}\hat v(\xi + 2\pi j)d\xi= \frac{1}{2\pi}\int_{-\pi}^{\pi}e^{i\xi x}\hat v(\xi,x)=: w(\xi,x),
\end{equation}
where $\hat v(\xi,x):= \sum_{j\in\Z}e^{i2\pi j x}\hat v(\xi + 2\pi j)$. Note that substitution of $\frac{1}{2\pi}\int_{-\pi}^{\pi}e^{i\xi x}\hat v(\xi,x)d\xi$ into the eigenvalue problem \eqref{evp} leads to the Bloch representation $\hat L_{\xi}[\bar u]:= e^{-i\xi x}L[e^{i\xi x}\bar u]$. Then $\sigma_{L^2(\R)}(L) = \cup_{\xi \in [-\pi/X,\pi/X]} \sigma_{L^2_{\mathrm{per}}}(L_{\xi})$.

The spectral stability conditions for this eigenvalue problem, defined in various contexts  \cite{Sc1,Sc2,JZ1,JZ2,JZN,JNRZ1,BJNRZ2}, are given by:
\begin{enumerate}
  \item[{(D1)}]
$\sigma(L)\subset\{\lambda\ |\ \Re \lambda<0\}\cup\{0\}$.
  \item[{(D2)}]
$\exists \ \theta>0$ such that
$\sigma(L_{\xi})\subset\{\lambda\ |\ \Re \lambda\leq-\theta|\xi|^2\}$ $\forall \xi\in[-\pi/X,\pi/X]$.
  \item[{(D3)}]
$\lambda=0$ is an eigenvalue of $L_0$ with a three dimensional generalized eigenspace
$\Sigma_0$.
\end{enumerate}
The following technical hypothesis is also needed:
\begin{enumerate}
  \item[{(H1)}]
For small $|\xi|$, the three zero eigenvalues of $L_0$ satisfy
$\lambda_j(\xi)= \alpha_j \xi + o(\xi)$ with the $\alpha_j$ distinct.
\end{enumerate}
%
%The hypothesis (H1) is equivalent to distinctness of characteristics of a first order Whitham modulation system. 
%
Conditions (D1)-(D3) and assumption (H1) for \eqref{gks}, imply the following nonlinear result.

\begin{proposition}[\cite{JZ1,JZN,BJNRZ1,JNRZ2} Nonlinear modulational stability, at Gaussian rate.]{
Assume that conditions (D1)-(D3) and assumption (H1) hold. Then
\begin{equation}
\label{gauss}
\|\tilde u(x,t)-\bar u(x-\psi(x,t))\|_{L^p\cap H^s},
\quad
\| \nabla_{x,t}\psi \|_{W^{s,p}}\leq C(1+t)^{-\frac 12 (1-1/p)},
\qquad 2\leq p\leq \infty,
\end{equation}
}
for localized initial perturbations 
$\| (\tilde u-\bar u)_{t=0} \|_{L^1\cap H^s}$ sufficiently small, 
with $s$ sufficiently large.
\end{proposition}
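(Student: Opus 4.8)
The plan is to prove this via the by-now standard nonlinear iteration-with-modulation scheme of Johnson--Zumbrun, converting the spectral hypotheses (D1)--(D3) and (H1) into sharp linearized decay estimates and then closing a nonlinear Duhamel iteration in which the single non-decaying (translational) direction is absorbed into the phase modulation $\psi$. The first ingredient is linear: using the Bloch decomposition $\sigma_{L^2(\R)}(L)=\bigcup_{\xi}\sigma_{L^2_{\mathrm{per}}}(L_\xi)$ recorded above, write $e^{Lt}=\frac1{2\pi}\int_{-\pi/X}^{\pi/X}e^{i\xi x}e^{L_\xi t}\Pi_\xi\,d\xi$ and split it into a low-frequency piece $S_1(t)$ carrying the three critical Bloch branches $\lambda_j(\xi)=\alpha_j\xi+o(\xi)$ (distinct by (H1), with $\Re\lambda_j(\xi)\le-\theta|\xi|^2$ by (D2)) near $\xi=0$, together with the associated spectral projection onto the continuation of $\Sigma_0$, and a fast-decaying remainder $\tilde S(t)$ satisfying $\|\tilde S(t)\|_{L^2\to H^s}\le Ce^{-\eta t}$ for some spectral gap $\eta>0$ by (D1). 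Stationary-phase/heat-kernel estimates on the scalar convection-diffusion-type kernels built from the $\alpha_j$ and the quadratic corrections then give, for $S_1(t)$ applied to $\partial_x$ of $L^1\cap H^s$ data, bounds of order $(1+t)^{-\frac12(1-1/p)-\frac12}$ in $L^p$; the gain of a half power of $t$ for every spatial derivative hitting the data is the structural feature driving the whole argument.

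Next I would set up the modulated perturbation variable: write $\tilde u(x,t)=\bar u(x-\psi(x,t))+v(x,t)$, Taylor expand, and derive $v_t-Lv=\partial_x\mathcal N$, where $\mathcal N=\mathcal N(v,\psi,\nabla_{x,t}\psi)$ collects the quadratic-and-higher terms in $v$ together with the commutator contributions $\psi_t\,\bar u'$, $(\psi_x)(\bar u v)'$, $\bar u''\psi\psi_x$, and so on. The crucial point, exactly as in the cited references, is that after this substitution the principal nonlinear terms can be arranged to sit under a spatial derivative, so the derivative gain from $S_1$ applies to them. One then \emph{defines} $\psi$ implicitly by applying the low-frequency spectral projection to the Duhamel representation of $v$ --- essentially $\psi(t)$ equals minus the projected integral $\int_0^t(\text{projected }S_1)(t-s)\,\partial_x\mathcal N(s)\,ds$ plus the projected contribution of the initial data --- so that the genuine remaining perturbation (namely $v$ corrected by the phase term) inherits the full $(1+t)^{-\frac12(1-1/p)}$ rate while $\nabla_{x,t}\psi$ decays at the same rate.

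The nonlinear iteration then runs as follows. Set $\zeta(t):=\sup_{0\le s\le t}\bigl(\|v(s)\|_{L^p\cap H^s}+\|\nabla_{x,t}\psi(s)\|_{W^{s,p}}\bigr)(1+s)^{\frac12(1-1/p)}$, substitute the linear bounds into the Duhamel formulas for $v$ and $\psi$, and estimate the resulting time convolutions: because $\mathcal N$ is at least quadratic, the source decays like $(1+s)^{-(1-1/p)}\zeta(s)^2$, and $\int_0^t(1+t-s)^{-\frac12(1-1/p)-\frac12}(1+s)^{-(1-1/p)}\,ds\lesssim(1+t)^{-\frac12(1-1/p)}$, the extra half power from the derivative gain being exactly what makes the integral converge. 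This yields $\zeta(t)\le C\bigl(E_0+\zeta(t)^2\bigr)$ with $E_0=\|(\tilde u-\bar u)_{t=0}\|_{L^1\cap H^s}$, and a continuity argument in $t$ closes the estimate for $E_0$ small, giving \eqref{gauss}.

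The main obstacle --- and where the real work lies --- is twofold. First, the high-regularity ($H^s$, $W^{s,p}$) components cannot be closed by the linearized decay estimates alone, since those lose derivatives; one must supplement them with nonlinear energy (Sobolev) estimates in the spirit of the Kawashima--Matsumura and Johnson--Zumbrun damping arguments, bounding $\|v\|_{H^s}$ by $\|v\|_{L^2}$ plus the decaying iteration quantities. Here one must carefully track the third-order dispersive term $\eps\partial_x^3$, which is skew-adjoint and contributes nothing to the energy dissipation --- all the parabolic smoothing comes from $\delta(\partial_x^2+\partial_x^4)$, so the argument genuinely uses $\delta\neq0$, consistent with \eqref{gks} being a singular perturbation of KdV. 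Second, the bookkeeping of which nonlinear terms truly carry a spatial derivative and which must instead be removed by the definition of $\psi$ is delicate; getting this right is precisely what renders the translational mode harmless, and it is the heart of the modulational scheme. Once these two points are settled, the remaining estimates are routine.
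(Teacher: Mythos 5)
The paper does not prove this proposition; it is quoted from \cite{JZ1,JZN,BJNRZ1,JNRZ2} and used as a black box to convert the verified spectral conditions into nonlinear stability. Your sketch accurately reproduces the Johnson--Zumbrun modulational iteration scheme from those references --- Bloch-decomposed linear estimates with a half-power gain per derivative, the phase-modulation ansatz absorbing the critical translational component, a quadratic Duhamel iteration closed by the resulting convolution inequality, and supplementary Kawashima-type high-frequency energy/damping estimates whose dissipation indeed comes only from the $\delta(\partial_x^2+\partial_x^4)$ terms --- so it matches the intended argument.
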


\subsubsection{The KdV limit}\label{s:limit}
In this paper, we investigate stability of the periodic traveling-wave solutions of \eqref{gks} in the KdV limit, $\delta \to 0$. We begin by stating the known existence result as summarized in \cite{JNRZ}.
\begin{proposition}[\cite{EMR} Existence]\label{p:kdvsolnexpand}
Given any positive integer $r\geq 1$, there exists $\delta_0>0$ such that the periodic traveling wave solutions $u_\delta(\theta)$, $\theta=x-ct$, of \eqref{gks}
are analytic functions of $\theta\in\RM$ and $C^r$ functions of $\delta\in[0,\delta_0)$.  
For $r\geq 3$,
profiles $u_\delta$ expand (up to translation) as
$\delta\to 0$ as a $2$-parameter family
\begin{equation}\label{kdvsolnexpand}
\left\{\begin{aligned}
u_\delta(\theta;a_0,k)&=u_0(\kappa \theta; a_0,k,\kappa)+\delta U_1(\theta)+\delta^2 U_2(\theta)+\mathcal{O}(\delta^3),\\
%ENDCHANGED
c&=c_0(a_0,k,\kappa)+\delta^2 c_2+\mathcal{O}(\delta^3),
\end{aligned}\right.
\end{equation}
where 
$$
\displaystyle
u_0(y;a_0,k, \kappa)=a_0+3k\left(\frac{\kappa K(k)}{\pi}\right)^2\cn^2\left(\frac{ K(k)}{\pi}y,k\right),\quad c_0=a_0+(2k-1)\left(\frac{\kappa K(k)}{\pi}\right)^2;
$$
comprise the $3$-parameter family (up to translation) 
of periodic (KdV) profiles and their speeds;
$\cn(\cdot,k)$ is the Jacobi elliptic cosine function
with elliptic modulus $k\in[0, 1)$;
%where $\dn(\cdot,k)$ denotes the Jacobi dnoidal function with elliptic modulus $k\in[0,1)$, and 
$K(k)$ and $E(k)$ are
the complete elliptic integrals of the first and second kind;
$a_0$ is a parameter related to Galilean invariance;
$k$ is a parameter in one-to-one correspondence with period;
and $\kappa=\mathcal{G}(k)$ is determined via the selection principle 
$$
\displaystyle
\left(\frac{K(k)\mathcal{G}(k)}{\pi}\right)^2=
   \frac{7}{20}\frac{2(k^4-k^2+1)E(k)-(1-k^2)(2-k^2)K(k)}{(-2+3k^2+3k^4-2k^6)E(k)+(k^6+k^4-4k^2+2)K(k)}.
$$
Moreover the functions $(U_i)_{i=1,2}$ are (respectively odd and even) solutions of the linear equations
\begin{equation}\label{cork}
\mathcal{L}_0[U_1]+\kappa u_0''+\kappa^3 u_0''''=0,\quad
\mathcal{L}_0[U_2]+\left(\frac{U_1^2}{2}-c_2 u_0\right)'+\kappa U_1''+\kappa^3 U_1''''=0,
\end{equation}
on $(0,2K(k))$ with periodic boundary conditions,
where 
$\mathcal{L}_0:=\kappa^2 \partial_x^3+\partial_x\left((u_0-c_0)\right)$.

\end{proposition}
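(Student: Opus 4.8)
The plan is to pass to the traveling-wave profile equation, to resolve the (order-raising) KdV limit $\delta\to0$ by geometric singular perturbation theory so that it becomes a \emph{regular} perturbation of a planar conservative system, and then to read off the selection principle as the resulting Melnikov / solvability condition. Writing $u=u(\theta)$, $\theta=x-ct$, in \eqref{gks} and integrating once in $\theta$ gives $\eps u''+\delta(u'''+u')+\tfrac12u^2-cu=q$, with $\eps=\sqrt{1-\delta^2}$ and $q$ an integration constant; one seeks $\theta$-periodic solutions modulo translation. For $\delta>0$ this is a non-degenerate real-analytic third-order ODE, so any bounded solution is automatically real-analytic in $\theta$, while at $\delta=0$ the periodic solutions are precisely the cnoidal profiles $u_0(\cdot\,;a_0,k,\kappa)$, which are real-analytic since $k\in[0,1)$; so the only genuine content in the regularity statement is the $C^r$ dependence on $\delta$ up to $\delta=0$. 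Since the order of the profile ODE drops from three to two as $\delta\to0$ there is no direct implicit-function argument in a fixed space; instead I would write it as the first-order system $u_1'=u_2$, $u_2'=u_3$, $\delta u_3'=-\eps u_3-\delta u_2-\tfrac12u_1^2+cu_1+q$, whose critical manifold $\{u_3=\eps^{-1}(cu_1+q-\tfrac12u_1^2)\}$ is normally hyperbolic and attracting (the fast eigenvalue is $-\eps<0$). Fenichel's theorem then yields, for each $r$ and each $\delta<\delta_0(r)$, an attracting slow manifold $M_\delta$, $C^r$ in $(u_1,u_2,\delta)\in(\text{compact})\times[0,\delta_0(r))$ and $O(\delta)$-close to the critical manifold, on which the reduced flow is a $C^r$-in-$\delta$ perturbation of the planar KdV traveling-wave flow $\eps u''=cu+q-\tfrac12u^2$; the finite smoothness of $M_\delta$ is exactly why $\delta_0$ must be allowed to depend on $r$.

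On $M_\delta$ the problem is a regular perturbation of a planar conservative system $\eps u''=-V'(u)$ that carries, for each $(c,q)$, a one-parameter family of periodic orbits; prescribing the period (equivalently $k$) and the mean (the Galilean parameter $a_0$) isolates one candidate orbit, and it persists iff the conserved energy $E=\tfrac\eps2(u')^2+V(u)$ has vanishing net change around one period, at leading order. Computing $dE/d\theta$ along $M_\delta$---where $u'''=\eps^{-1}(c-u)u'+O(\delta)$, so the effective non-conservative force acting on the reduced flow is $-\delta u'\bigl(1+\eps^{-1}(c-u)\bigr)+O(\delta^2)$---and integrating over the unperturbed orbit, this Melnikov condition collapses, using $u_0'''=(c_0-u_0)u_0'$ and one integration by parts, to
\[
\oint (u_0')^2\,d\theta=\oint (u_0'')^2\,d\theta,
\]
which, up to the rescaling $\theta\mapsto\tfrac{K(k)\kappa}{\pi}\theta$ normalizing the period, is precisely the $\langle\,\cdot\,,u_0\rangle=0$ solvability condition for the first equation of \eqref{cork} (the $\langle\,\cdot\,,1\rangle=0$ condition being automatic, as every forcing term there is an exact derivative). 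Inserting the explicit cnoidal form of $u_0$, the constant $a_0$ drops out of both sides and the relation becomes $\bigl(\tfrac{K(k)\kappa}{\pi}\bigr)^2=\int_0^{2K}\bigl((\cn^2)'\bigr)^2dz\big/\int_0^{2K}\bigl((\cn^2)''\bigr)^2dz$; evaluating these two Jacobi-elliptic integrals---via $\bigl((\cn^2)'\bigr)^2=4\cn^2(1-\cn^2)(1-k^2+k^2\cn^2)$, the analogous quadratic-in-$\cn^2$ expression for $(\cn^2)''$, and the standard reductions of $\int_0^{2K}\cn^{2m}$ to $E(k)$ and $K(k)$---produces exactly $\kappa^2=\mathcal G(k)^2$ with $\mathcal G$ as in the statement. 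In particular the selection is independent of $a_0$, $c$, and $q$ (the latter two entering \eqref{cork} only at order $\delta^2$), so it really does cut the three-parameter cnoidal family down to the two-parameter family labelled by $(a_0,k)$.

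Because $k\mapsto\mathcal G(k)^2$ is a smooth positive function, the selection relation has nonvanishing $\kappa$-derivative, and the implicit function theorem applied to the reduced $C^r$-in-$\delta$ problem on $M_\delta$ gives, for each admissible $(a_0,k)$ and each $\delta<\delta_0(r)$, a unique (modulo translation) periodic orbit depending $C^r$ on $\delta$, with leading profile $u_0(\cdot\,;a_0,k,\mathcal G(k))$ and speed $c=c_0+O(\delta^2)$; feeding the expansion \eqref{kdvsolnexpand} back into the profile ODE and matching powers of $\delta$ then returns the linear corrector equations \eqref{cork}, with the respective oddness and evenness of $U_1,U_2$ and the absence of an $O(\delta)$ term in $c$ forced by the fact that the perturbation $\delta(u'''+u')$ is odd under $\theta\mapsto-\theta$ while $u_0$ is even, and with $c_2$ determined by the $\langle\,\cdot\,,u_0\rangle=0$ solvability for the $U_2$-equation. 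The main obstacle is the first step: the singular, order-raising nature of the limit forbids a naive implicit-function argument, and the Fenichel reduction---or, alternatively, a uniform-in-$\delta$ a priori estimate and bootstrap in a scale of analytic (Gevrey) spaces, in which $\delta\partial_\theta^3$ becomes a genuinely small perturbation of $\eps\partial_\theta^2$ at the price of a controlled loss of analyticity radius---is what actually makes the formal KdV-limit expansion rigorous and is responsible for $\delta_0$ depending on $r$; a secondary point requiring care is verifying that the leading Melnikov condition is independent of $a_0,c,q$, so that the selection genuinely reduces to the scalar relation $\kappa=\mathcal G(k)$ rather than a codimension-one surface in $(a_0,k,\kappa)$-space.
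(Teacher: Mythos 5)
The paper does not prove Proposition~\ref{p:kdvsolnexpand} itself: it is quoted as a known result of \cite{EMR}, in the form summarized in \cite{JNRZ}, and the present paper's contributions begin strictly downstream of it. There is therefore no internal argument of the paper to compare your proposal against; what follows is an assessment of the sketch on its own terms.

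Your route is coherent and identifies the two genuine difficulties correctly. First, the order-drop in the profile ODE as $\delta\to0$: you resolve this with a Fenichel reduction to an attracting $C^r$ slow manifold (which is precisely why $\delta_0$ must be allowed to depend on $r$), so that the reduced flow is a regular $C^r$-in-$\delta$ perturbation of the planar cnoidal system. Second, the $\kappa$-selection: you derive it as a Melnikov / energy-balance condition, reducing by integration by parts to $\oint(u_0'')^2\,d\theta=\oint(u_0')^2\,d\theta$, and correctly identify this, after the $x=\kappa\theta$ rescaling, with the Fredholm solvability condition $\langle\kappa u_0''+\kappa^3u_0'''',u_0\rangle=0$ for the $U_1$-equation in \eqref{cork}. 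The parity argument for the oddness of $U_1$ and the absence of an $O(\delta)$ term in $c$ is standard and right. Two details would need sharpening before this is a proof: (a) persistence of the selected orbit under the $O(\delta)$ perturbation requires a \emph{simple} zero of the Melnikov function, i.e.\ $\partial_\kappa M\neq0$ at $\kappa=\mathcal{G}(k)$ for fixed $(a_0,k)$; your remark that the selection relation has nonvanishing $\kappa$-derivative is not quite the relevant transversality statement, though the needed fact does hold since $M$ takes the form $c(k)\,\kappa^m\bigl(\kappa^2 I_2-I_1\bigr)$ with $I_1,I_2>0$, so its positive zero is manifestly simple. (b) You assert but do not carry out the elliptic-integral reduction of $\int_0^{2K}((\cn^2)')^2\,dz\big/\int_0^{2K}((\cn^2)'')^2\,dz$ to the stated ratio with the factor $7/20$; that computation is the only place the explicit formula for $\mathcal{G}$ is actually produced, so it cannot be omitted.
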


We may parameterize waves by $k$ alone since the periodic solutions \eqref{cork} are independent of $a_0$ due to Galilean invariance of \eqref{gks}. 

We introduce two technical hypotheses found in \cite{JNRZ2},

\begin{enumerate}
  \item[{(A1)}]
The non-zero eigenvalues of the linearized (Bloch) KdV operator
$\mathcal{L}_\xi[u_0]$ about $u_0(\cdot; a_0,k, \mathcal{G}(k))$
are simple for each $\xi\in[-\pi/X,\pi/X)$ and
$\lambda=0$ is an eigenvalue only if $\xi=0$.
  \item[{(A2)}]
The zero eigenvalues of $\mathcal{L}_0[u_0]$ expand about $\xi=0$ as
$\lambda_j(\xi)=i\xi \alpha_j + O(\xi^2)$ with $\alpha_j$ distinct.
\end{enumerate}

Under assumption (A1), the spectra $\lambda$ of $\mathcal{L}_\xi$,
for $(\xi,\lambda) \ne (0,0)$, expand formally in $\xi$ \cite{BN} as
\begin{equation}\label{exp_kdvks}
\lambda(\xi)=\lambda_{KdV}(\xi)+\delta\lambda_1(\xi)+O(\delta^2), 
\end{equation}
where $\xi\in(-\pi/X, \pi/X)$ is the Bloch number, 
and $X$ the spatial period of the wave train.

Define the stability condition (S1), $$
\Re(\lambda_1) <0$$ for all $(\xi,\lambda)\neq (0,0)$ for $\lambda_1$ as
in \eqref{exp_kdvks}, where 
\begin{itemize}
\item[{(S1)}]\begin{equation}\label{lambda1}
\Re(\lambda_1)=
\frac{\langle v', v''+v''''\rangle }
{\langle v',v \rangle},
\end{equation}
where $\langle \cdot,\cdot\rangle$ denotes complex $L^2_{per}(0,X)$ inner
product, and $v$ is the antiderivative of an 
associated eigenfunction of $\mathcal{L}_\xi$,
which is explicitly computable in terms of Jacobi elliptic functions; 
see \cite{BN} or Appendix A.1 of \cite{BJNRZ2}. For a definition of $v(\cdot)$, see equation \eqref{vdef}. 
\end{itemize} 

The following theorem reduces the question of stability to a few simple conditions to be verified.

\begin{proposition}[\cite{JNRZ2} Limiting stability conditions ]\label{p:kdvstab}
For fixed $(a_0,k)$, let $u_\delta(\cdot;a_0,k)$ 
denote a family of roll-wave solutions \eqref{kdvsolnexpand}
of \eqref{gks} as $\delta\to 0$, and let $\Re \lambda_1$ be defined as
in \eqref{lambda1} for all $\xi\neq 0$, $\lambda_{KdV}$
(or, in case (A1) fails, by
continuous extension via the explicit parametrization of \cite{BN}).
(i) If $\Re \lambda_1>0$ for some $\xi\neq 0$, $\lambda_{KdV}$, then 
$u_\delta(\cdot;a_0,k)$ is spectrally \emph{unstable} for $\delta$ sufficiently
small and nearby $(a_0,k)$ (equivalently, nearby limiting period $X$).
(ii) If $\Re \lambda_1<0$ for all $\xi\neq 0$ and (A1)-(A2) are satisfied
for the limiting wave $u_0$, then
$u_\delta(\cdot;a_0,k)$ is spectrally (hence nonlinearly)
\emph {stable} for $\delta$ sufficiently small and nearby $(a_0,k)$ (equivalently, nearby limiting period $X$).
\end{proposition}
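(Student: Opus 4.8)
The plan is to deduce Proposition \ref{p:kdvstab} from the nonlinear result (the Nonlinear modulational stability proposition) by verifying, for $\delta>0$ sufficiently small, the diffusive spectral stability conditions (D1)--(D3) together with the technical hypothesis (H1); for part (i) one instead exhibits directly a point of spectrum in the open right half-plane. The organizing structural fact is that $\mathcal{L}_\xi[u_0]$ is the Bloch linearization of the integrable, Hamiltonian KdV equation about a cnoidal wave, so $\sigma(\mathcal{L}_\xi[u_0])\subset i\mathbb{R}$, i.e.\ $\Re\lambda_{KdV}\equiv 0$. Consequently, wherever the expansion \eqref{exp_kdvks} is valid, $\Re\lambda(\xi)=\delta\,\Re\lambda_1(\xi)+\mathcal{O}(\delta^2)$, and the sign of $\Re\lambda_1$ decides stability; the whole argument is the task of making this precise and uniform in $\xi$.

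\textbf{The bulk regime.} Fix a compact set of Bloch numbers $\xi$ bounded away from $0$, and write $\mathcal{L}_\xi[u_\delta]=\mathcal{L}_\xi[u_0]+\delta\,\mathcal{B}_\xi+\mathcal{O}(\delta^2)$, where $\mathcal{B}_\xi$ collects the Kuramoto--Sivashinsky terms $\partial_x^2+\partial_x^4$ and the first profile correction $U_1$ from Proposition \ref{p:kdvsolnexpand} (whose $C^r$-in-$\delta$ regularity is precisely what turns the formal expansion \eqref{exp_kdvks} into a genuine one with controlled remainder). Under (A1) each nonzero eigenvalue of $\mathcal{L}_\xi[u_0]$ is simple, so analytic (Kato) perturbation theory yields $\lambda(\xi)=\lambda_{KdV}(\xi)+\delta\lambda_1(\xi)+\mathcal{O}(\delta^2)$ with $\lambda_1$ the standard first-order coefficient $\langle\tilde\phi,\mathcal{B}_\xi\phi\rangle/\langle\tilde\phi,\phi\rangle$; substituting the antiderivative $v$ and using the conservative form of $\mathcal{L}_\xi$ reduces this to the quotient \eqref{lambda1}. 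Since $\Re\lambda_{KdV}\equiv0$, on this compact $\xi$-set $\Re\lambda(\xi)=\delta\,\Re\lambda_1(\xi)+\mathcal{O}(\delta^2)$ uniformly. For part (i): if $\Re\lambda_1(\xi_*)>0$, then by continuity of the explicit formula for $\lambda_1$ (using the KdV parametrization of \cite{BN} to extend $\lambda_1$ through any failure of (A1)) one has $\Re\lambda_1>0$ on an open $\xi$-neighborhood and for nearby $(a_0,k)$, whence $\Re\lambda(\xi)>0$ there for all small $\delta$ — spectral instability. For part (ii), $\Re\lambda_1<0$ plus the uniform bound gives $\Re\lambda(\xi)\le-\eta<0$ on the compact set.

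\textbf{The critical regime near $(\xi,\lambda)=(0,0)$} --- the main obstacle. Here $\lambda=0$ is a triple eigenvalue of $\mathcal{L}_0[u_0]$ (translation plus the two modulation parameters), $\langle v',v\rangle\to0$, and the simple-eigenvalue expansion breaks down; one must expand jointly in $\xi$ \emph{and} $\delta$. The plan is to pass to the total group eigenspace $\Sigma_0(\xi,\delta)$, a small perturbation of the three-dimensional generalized kernel, on which $\mathcal{L}_\xi[u_\delta]$ acts by a $3\times 3$ matrix $M(\xi,\delta)$ whose eigenvalues are the three critical spectral curves. By (A2), $M(\xi,0)=i\xi\,\mathrm{diag}(\alpha_1,\alpha_2,\alpha_3)+\mathcal{O}(\xi^2)$ with the $\alpha_j$ distinct, so the $\mathcal{O}(\delta)$ and $\mathcal{O}(\xi^2)$ corrections give $\Re\lambda_j(\xi,\delta)=-\theta_j\xi^2+o(\xi^2)$, and the crux is to show the effective diffusion coefficients $\theta_j=\theta_j(\delta)$ are strictly positive for small $\delta$ --- this is where (S1) re-enters, since the limit of $\Re\lambda_1(\xi)$ as $\xi\to0$ along each critical branch computes a positive multiple of $\theta_j$. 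This gives the parabolic bound (D2) near $\xi=0$; combined with the bulk estimate it yields (D2) for all $\xi$, hence (D1). Condition (D3) is structural (the generalized kernel of $L_0$ has dimension three --- translation plus two parameters --- for every $\delta$), and (H1) follows from (A2) by continuity of the linear coefficients $\alpha_j(\delta)$ in $\delta$. Feeding (D1)--(D3) and (H1) into the nonlinear Proposition gives spectral, hence nonlinear, stability, completing (ii).

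\textbf{Where the difficulty lies.} The bulk perturbation expansion and the passage to \eqref{lambda1} are routine; the delicate points are (a) carrying out the joint $(\xi,\delta)$ reduction near the origin and matching it continuously to the bulk regime so that the parabolic bound of (D2) holds uniformly in $\xi$, and (b) verifying that the effective diffusion coefficients inherited from $\Re\lambda_1$ are genuinely signed, i.e.\ that no cancellation occurs between the $\mathcal{O}(\xi^2)$ dispersive KdV contribution and the $\mathcal{O}(\delta)$ Kuramoto--Sivashinsky contribution. The $C^r$-in-$\delta$ regularity of the profiles (Proposition \ref{p:kdvsolnexpand}) is what makes all of the expansions rigorous rather than formal.
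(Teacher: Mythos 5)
This proposition is cited from \cite{JNRZ2} and is stated in the paper \emph{without} proof; the author invokes it as an external result and devotes the rest of the paper to verifying the hypotheses (A1), (A2), (S1) numerically. So there is no in-paper proof for your sketch to match. Your reconstruction is nonetheless a structurally reasonable account of the sort of argument one expects in \cite{JNRZ2}: reduction to (D1)--(D3), (H1) and the Gaussian-decay proposition; the observation that the KdV Bloch spectrum is purely imaginary so $\Re\lambda(\xi)=\delta\,\Re\lambda_1(\xi)+O(\delta^2)$; analytic (Kato) perturbation of simple eigenvalues in the bulk; a three-dimensional Lyapunov--Schmidt reduction near $(\xi,\lambda)=(0,0)$ using (A2); and the instability half of the statement by open-ness and continuity in $(\xi,a_0,k)$.

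Where your proposal stops short of an actual proof is exactly where you flag it: the matching of the bulk estimate to the critical regime near $\xi=0$, and the claim that the limit of $\Re\lambda_1$ along each critical branch furnishes strictly positive effective diffusion coefficients $\theta_j$. That is not a routine step. The natural scales $O(\xi^2)$ (dispersive KdV correction) and $O(\delta)$ (KS viscous correction) are not \emph{a priori} comparable, and one has to do a genuine two-parameter expansion of the $3\times 3$ reduced matrix and verify that the resulting real parts behave like $-\theta_j\xi^2$ uniformly down to $\xi=0$ rather than, say, changing sign in an intermediate regime $\xi\sim\delta^{1/2}$. You correctly identify this as ``the crux'' but then defer it; as written, the proposal is an outline of where the proof must go, not a proof. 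For the present paper's purposes this is immaterial --- the proposition is taken as given and what matters is that its hypotheses are checked --- but if one were to actually prove it, the content would be precisely in the step you leave open. One smaller point: you should also note that the parenthetical ``or, in case (A1) fails, by continuous extension via the explicit parametrization of \cite{BN}'' signals that the instability half (i) must hold even where the Kato expansion degenerates, so the continuity argument there really does rely on the \emph{explicit} elliptic-function formula for $\lambda_1$ rather than on abstract perturbation theory.
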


Numerical computations of \cite{BN,BJNRZ2} indicate that the
limiting stability conditions hold together with (A1)-(A2)
for limiting periods $X$ in an interval $(X_m,X_M)$,
and fail for $X$ outside $[X_m,X_M]$; see equation \eqref{boundaries}.

\subsection{Description of the main result}\label{mainresult}

Our present purpose is to rigorously verify the numerical observations of \cite{BN,JNRZ} that stability occurs on a limiting interval
$[X_m,X_M]$ as $\delta \to 0$ by verifying that assumptions (A1)-(A2) and (S1) hold for period $X\in [X_l,X_r]\subset [X_m,X_M]$, implying that for $\delta>0$ sufficiently small, $X$ periodic waves of \eqref{gks} are spectrally, hence nonlinearly, stable by Proposition \ref{p:kdvstab}. The main contribution here is completely rigorous numerical verification of stability of a family of periodic traveling waves of \eqref{gks} in the limit $\delta \to 0$.

Our main theorem, proven by interval arithmetic, is as follows:

\begin{theorem}\label{thm:main}
There are $k_l \in [0.9421,0.9426]$, $k_r\in[0.99999838520,0.99999838527]$, corresponding to $X_l\in [8.43,8.45]$ and $X_r\in [26.0573, 26.0575]$, and $k_{\min} =$ \kUnstableLowerLeft \   and $k_{\max}=$ \kUnstableUpperRight, corresponding to $X_{\min}\approx 6.28$ and $X_{\max}\approx 48.3$, such that the periodic traveling-wave solutions of \eqref{gks} described in Proposition \ref{p:kdvsolnexpand} are spectrally unstable  on $[k_{\min},k_l]$, corresponding to $[X_{\min},X_l]$ , and $[k_r,k_{\max}]$, corresponding to $[X_r,X_{\max}]$, and are spectrally, thus nonlinearly, stable on $k\in[k_l,k_r]$, corresponding to $[X_l,X_r]$.
\end{theorem}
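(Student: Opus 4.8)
The plan is to reduce Theorem \ref{thm:main} entirely to the hypotheses of Proposition \ref{p:kdvstab}, so that the only remaining work is a rigorous numerical verification of (S1), (A1), and (A2) over an explicit range of the elliptic modulus $k$. Concretely, I would first fix the parametrization: using Proposition \ref{p:kdvsolnexpand}, write the limiting KdV profile $u_0(\,\cdot\,;a_0,k,\mathcal{G}(k))$ with $\kappa=\mathcal{G}(k)$ given by the stated selection principle, and record the map $k\mapsto X(k)$ from modulus to limiting period (an explicit expression in $K(k)$ and $\mathcal{G}(k)$), which is monotone on the relevant range; this is what converts statements about intervals in $k$ into statements about intervals in $X$. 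Then I would assemble from \cite{BN} (or Appendix A.1 of \cite{BJNRZ2}) the closed-form expression for the antiderivative $v(\xi,\cdot)$ of the associated KdV Bloch eigenfunction, and substitute into \eqref{lambda1} to obtain $\Re(\lambda_1)$ as an explicit ratio of $L^2_{per}(0,X)$ inner products of Jacobi elliptic functions depending on the two variables $(k,\xi)$ (after nondimensionalizing $\xi$ against the period). The sign of this ratio — and the separate conditions (A1)–(A2) on the KdV Bloch spectrum — are exactly the quantities to be certified.

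The core of the proof is then a rigorous enclosure computation. I would evaluate $\Re(\lambda_1)(k,\xi)$ using interval arithmetic: the elliptic functions $\cn$, $K$, $E$, and the selection function $\mathcal{G}$ are all represented by validated enclosures (via AGM-type iterations for $K,E$ and validated series/continued-fraction evaluation for $\cn$, each with rigorous remainder bounds), and the inner products are computed by rigorous quadrature with interval remainder terms. Because a naive grid over $(k,\xi)\in[k_l,k_r]\times(0,\pi/X]$ is prohibitively expensive — the elliptic functions have very poor sup-norm bounds near $k=1$ — I would instead use validated analytic interpolation: on each subrectangle of a carved-up parameter domain, represent $\Re(\lambda_1)$ by a Chebyshev (or Taylor) model whose coefficients carry interval error bars, exploiting the fast geometric decay of interpolation error for analytic functions to get away with a low-degree model, and handle the two-variable dependence by the two-step ``carve the parameter space, then interpolate'' procedure advertised in the abstract. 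On the stable interval $k\in[k_l,k_r]$ one certifies $\Re(\lambda_1)<0$ uniformly in $\xi$ (plus (A1)–(A2)); on the flanking intervals $[k_{\min},k_l]$ and $[k_r,k_{\max}]$ one exhibits a specific $\xi$ at which the enclosure for $\Re(\lambda_1)$ is strictly positive, giving instability. The endpoints $k_l,k_r$ are then pinned down by a validated bisection: one brackets the sign change of $\sup_\xi \Re(\lambda_1)(k,\xi)$ to the stated accuracy.

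The main obstacle I expect is the bootstrap tension flagged in the abstract: near $k_r\approx 1$ the Jacobi elliptic functions and their derivatives grow like large powers of $1/(1-k^2)$, so the crude a priori sup-norm bounds used to control interpolation remainders would force an infeasibly fine grid and an infeasibly high polynomial degree, while the true interpolation error — governed by the width of an analyticity strip that does not collapse as fast — is actually tiny. Resolving this requires a self-improving argument: use a coarse, pessimistic bound to get a first (poor but rigorous) enclosure of $\Re(\lambda_1)$ and of the relevant derivatives, feed that back to sharpen the remainder estimate, and iterate until the certified error is small enough to decide the sign — all while keeping the number of subrectangles and the interpolation degrees within a practical computation budget. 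A secondary difficulty is verifying (A1)–(A2) rigorously near the homoclinic limit $k\to1$, where the Bloch spectrum of $\mathcal{L}_\xi[u_0]$ has eigenvalues that can nearly collide; here one leans on the explicit parametrization of \cite{BN} together with interval enclosures of the relevant spectral quantities to certify simplicity and the distinctness of the $\alpha_j$ in (A2), and near the endpoints $k_{\min},k_{\max}$ one uses continuous extension of $\Re(\lambda_1)$ via that parametrization as permitted by Proposition \ref{p:kdvstab}. Once all enclosures are in place, the conclusion is immediate from Proposition \ref{p:kdvstab}: stability for small $\delta$ on $[X_l,X_r]$ and instability on $[X_{\min},X_l]\cup[X_r,X_{\max}]$.
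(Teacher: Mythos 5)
Your high-level strategy matches the paper's: reduce Theorem \ref{thm:main} to Proposition \ref{p:kdvstab} by certifying (S1), (A1), (A2) over a range of $k$ via interval arithmetic and Chebyshev interpolation of analytic functions, carving the parameter domain into manageable pieces and exploiting exponential decay of analytic-interpolation error to absorb the enormous sup-norm bounds near $k=1$. That is the right outline. But there are two structural steps the paper needs that your plan does not address, and without them the argument does not close.

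First, the ratio $\Re\lambda_1 = f(\alpha)/g(\alpha)$ in \eqref{innerprod} has a $0/0$ degeneracy at $\alpha\in\{\omega,\,i\omega',\,\omega+i\omega'\}$, i.e.\ at $\xi\equiv 0\ (\mathrm{mod}\ 2\pi/X)$, where both inner products vanish identically, and a genuine singularity as $\alpha\to 0$ on the $\tilde n=0$ branch, where $\xi(\alpha)\to\infty$. Interval enclosures alone cannot resolve a $0/0$: however tight the enclosures of $f$ and $g$, the quotient interval is unbounded. The paper handles this analytically: Lemma \ref{lemma:zeros} exhibits the common zeros in closed form, and then the sign of $\lambda_1$ near those points is determined by monotonicity of $f,g$ in $\psi$ (or the sign of $f_\psi/g_\psi$). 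On the $\tilde n=0$ branch, $\tilde f$ is factored as a polynomial $\sum_k p_k(\psi)(i\xi)^k$ and a polynomial root bound is used to exclude zeros for small $\psi$, precisely because $\xi$ diverges there and direct interpolation of $\tilde f$ is impossible. Your outline would stall at exactly these Floquet values.

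Second, to produce actual transition values $k_l,k_r$ — rather than merely ``instability for $k<0.9421$ and stability for $k>0.9426$ with an undetermined gap'' — one must rule out multiple sign changes of the stability indicator inside the narrow gap intervals. Validated bisection brackets a sign change but does not certify uniqueness. The paper devotes Lemma \ref{transitionlemma} (a convexity-plus-monotonicity argument giving a unique zero of $(h,h_\psi)$, with an a posteriori Newton-type error bound) and Lemmas \ref{lemma:lower:strict3}, \ref{lemma:upper:strict2} to this, verifying the monotonicity/convexity hypotheses by interval arithmetic in a neighborhood of each transition. Without some argument of this type, your proof establishes that the stability band is contained between two sign-definite regions, not the existence of $k_l,k_r$ with the stated property.

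Two smaller discrepancies: the paper parametrizes Floquet modes by $\alpha=\tilde n\omega+i\psi\omega'$ and works with Weierstrass $\sigma,\zeta,\wp$ and Jacobi theta $q$-series rather than directly with $\xi$ and $\cn,K,E$; and the ``bootstrap'' in the abstract is not an iterative self-improvement scheme as you describe — it is the one-shot observation that a crude sup-norm bound $M_\rho$ of order $10^{23}$ still yields interpolation error of order $10^{-18}$ with a few hundred nodes, because the error scales like $M_\rho/\sinh(\eta N)$.
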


We note that the limits $k\to 0$ and $k\to 1$ are not accessible numerically with the approach of this paper, but should be treatable by asymptotic analysis. The limit $k\to 0$ corresponds to the limiting Hopf bifurcation, and as $k\to 1$, the periodic profiles converge to the limiting homoclinic solution; See Figure \ref{fig171}. %In \cite{BJNRZ5} 

\begin{figure}[htbp]
 \begin{center}
$
\begin{array}{lcr}
(a)\includegraphics[scale=0.3]{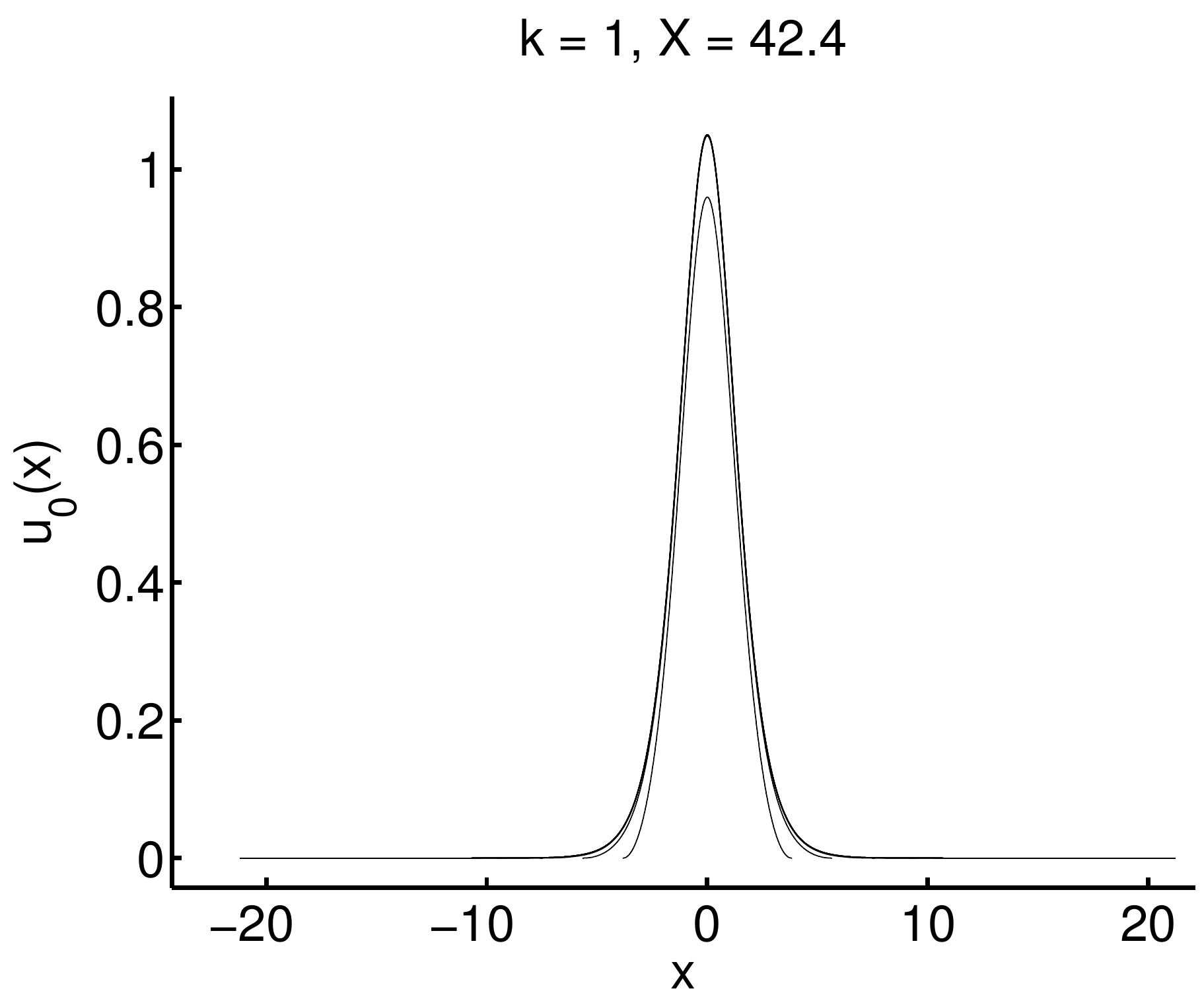}&(b)\includegraphics[scale=0.3]{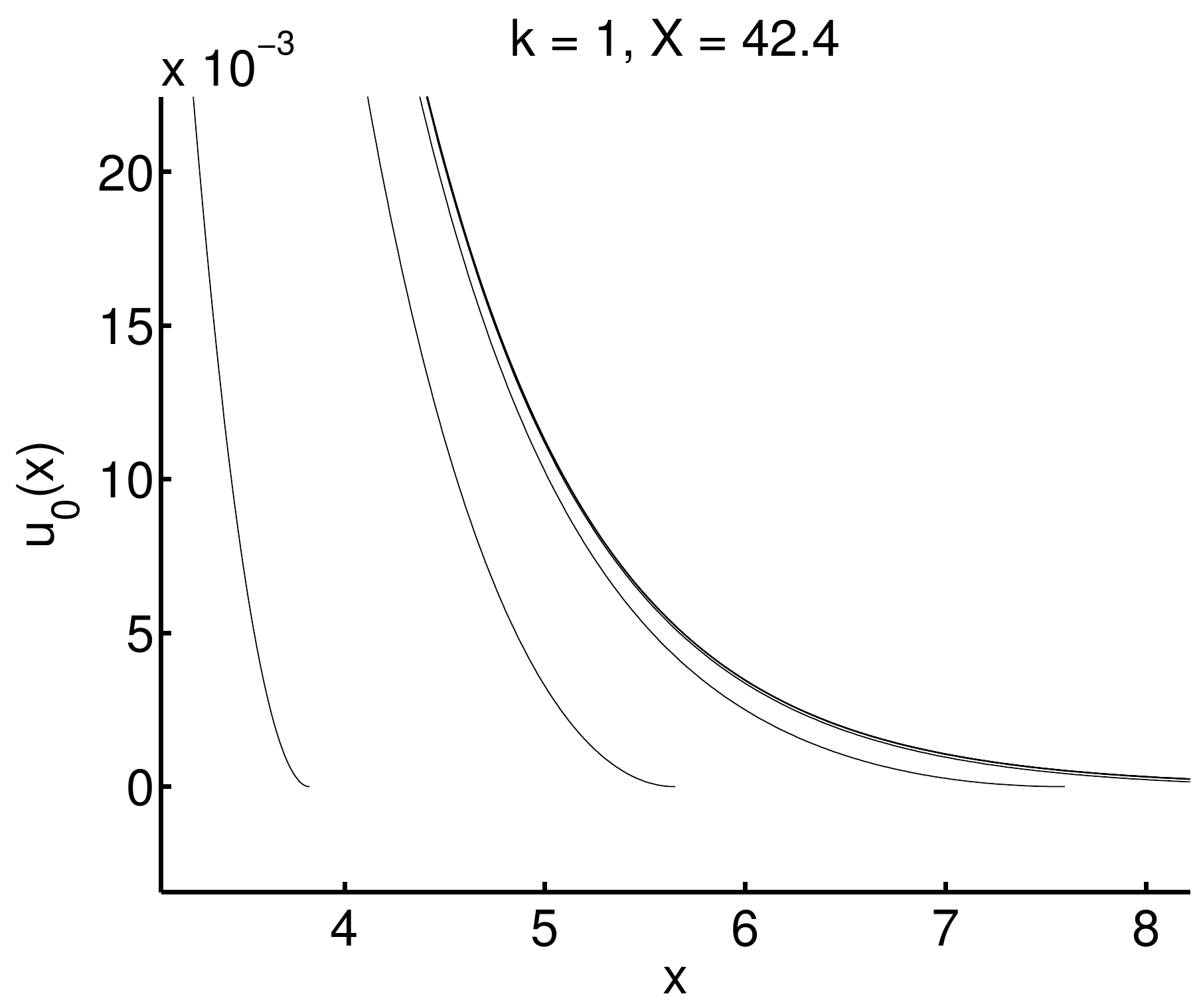}
\end{array}
$
\end{center}
\caption{(a) We plot $u_0(xK(k)/X)$ verse $x$ for several periods $X$ where $u_0(\cdot)$ is described in Proposition \ref{p:kdvsolnexpand}. (b) We zoom in on (a) showing convergence of the periodic to a pulse. }
\label{fig171}\end{figure}

\subsection{Discussion and open problems}

This gives rigorous validation for the first time of any roll wave solution of a conservation law. The associated analysis is delicate since the spectra of the limiting KdV waves is completely neutral.
%CHANGEDkz
%Solitary waves, despite having unstable essential spectrum, appear to dominate asymptotic behavior of stability of periodic waves \cite{PSU}. Near-solitary periodic waves can coexist in mutually stabilizing near-periodic arrays \cite{JNRZ2}. A heuristic explanation of this mutual stabilization is found in \cite{BJNRZ2,BJNRZ5}. The present work provides rigorous verification of stability of near-solitary periodic waves. The entire physical story is given in \cite{BN,JNRZ}. The present work has separate mathematical interest as a perturbed integrable system analysis.  
An interesting related issue is that
solitary waves, despite having unstable essential spectrum, appear to dominate asymptotic behavior of stability of weakly unstable thin film flow \cite{PSU}. 
The mathematical explanation for this puzzling phenomenon is that
near-solitary waves can coexist in mutually stabilizing near-periodic arrays \cite{JNRZ2}. A heuristic explanation of this mutual stabilization is found in \cite{BJNRZ2,BJNRZ5}. The present work provides rigorous verification of stability of near-solitary periodic waves, confirming the spectral stability assumptions made in \cite{JNRZ2} and supported by nonrigorous numerics in \cite{BJNRZ1,BJNRZ2,JNRZ1}.
%The entire physical story is given in \cite{BN,JNRZ}. 
For physical background, see \cite{BN,PSU,JNRZ}. 
The present work has separate mathematical interest as a perturbed integrable system analysis.  
%ENDCHANGEDkz

%We find that the lower stability boundary occurs for $X \in[8.43,8.45]$ which agrees with the value $X\approx \frac{2\pi}{0.744\pm 0.001}\approx 8.445\pm 0.011$ determined in \cite{BN}. However, we find that the upper stability boundary occurs for $X\in [26.054,26.058]$ which slightly differs from
We find that the lower stability boundary occurs for $X \in[8.43,8.45]$ which agrees with the value $X\approx \frac{2\pi}{0.744\pm 0.001}\approx 8.445\pm 0.011$ determined in \cite{BN}. However, we find that the upper stability boundary occurs for $X\in [26.0573, 26.0575]$ which slightly differs from
%CHANGEDkz, added this:
the value
%ENDCHANGED
 $X \approx \frac{2\pi}{0.239\pm 0.001}\approx 26.29\pm 0.11$ reported in\cite{BN}. That is,  in \cite{BN}  the computationally difficult  upper stability boundary is accurate to only two digits even though the computations used double precision arithmetic,
%CHANGEDkz, added:
whereas we, by an additional post-processing step whose necessity is indicated by our interval bounds, achieve accuracy up to five
%CHANGEDbb, changed four to five based on tighter result in the end.
digits, and in principle more.
%ENDCHANGED
 This demonstrates the additional advantage that interval arithmetic and rigorous verification may provide. In general, the techniques of this paper may provide a useful guide for future analysis involving rigorous verification using one and two-dimensional analytic interpolation and bootstrapping techniques, subdivision of domains to keep the number of interpolation nodes small, verification of strict stability transition, and the interval evaluation of a polynomial interpolant with Taylor expansion to reduce the width of the resulting interval.

A future direction would be to establish stability or spectral instability in the $\delta \to 0$ limit for the entire family of periodic waves of \eqref{gks} given by \eqref{kdvsolnexpand} including the homoclinic or infinite-period limit which will require different analysis, an avenue we plan on pursuing.
A long-term goal is to build functionality for automatic convergence error estimation into STABLAB \cite{BHZ2}, a MATLAB based numerical library for the study of traveling waves, with interval arithmetic making Evans function computations completely rigorous. 
Obtaining this goal would complete the program proposed in \cite{ZH} by Zumbrun and Howard to determine stability of general traveling waves.

\subsection{Protocol and readers guide}

In this section we describe the protocol we follow to provide numerical proof, and explain what we mean by interval arithmetic and numerical proof.

\subsubsection{Interval arithmetic}
In scientific computation, one seeks to use algorithms that increase computational speed and control round-off error. Many examples, such as the Patriot Missile failure in Dhahran in 1991 or the explosion of the Ariane 5 (flight 501) rocket in 1996 \cite{IB}, demonstrate the potential consequences of approximating real number operations with machine arithmetic. Arbitrary precision arithmetic can reduce the size of approximation error, but ultimately, error exists when representing the real numbers with a finite subset. Interval arithmetic bounds round-off error by utilizing intervals that contain the numbers of interest. Operations may then be defined on these intervals. For example, if $A$ and $B$ are two intervals, then an interval operation, as carried out by the computer, is defined by $A\cdot B\to C$ where $C$ is an interval such that $a\cdot b \in C$ whenever $a\in A$ and $b\in B$. Preferably, $C$ is the smallest interval with this property. 

Many software packages are available for interval arithmetic computation. For example, MATHEMATICA supports interval arithmetic directly. The package intpakX provides interval arithmetic support for MAPLE, and INTLAB does the same for MATLAB. The Boost project has an interval arithmetic implementation for C++, and Python has support through the package SymPy. The level of development of these packages varies from experimental to highly developed. We use the MATLAB based package INTLAB \cite{R} which has support for complex interval arithmetic. 

\subsubsection{What is numerical proof?} Various standards of numerical rigor exist in mathematical literature. In the present work we are interested in computer assisted proof. Numerical proof inherently includes an empirical component since the validity of computations rely on the computer hardware and software functioning as supposed at run time. However, if carefully constructed and auditable, numerical proof provides a compelling argument that a result is true.    

In the present work we consider a theorem to be established via numerical proof when we have carried out a computation with known error bounds that implies the theorem is true and we have provided sufficient details to make independent verification reasonably accessible. In particular, by providing sufficient details, we mean that algorithms and their error bounds are described in the paper, computations employ interval arithmetic to bound machine truncation error, the source code is available somewhere accessible, and the computational details are provided. Documentation for the source code is available at \cite{Bdoc} and source code is available upon request. By using INTLAB in MATLAB and providing the source code, we consider our study to be reasonably verifiable. It is also reproducible \cite{Rep} in the sense that the source code is provided along with the details describing the computing environment at run time. 

\subsection{Plan of the paper}

In Section \ref{stabsingle}, for clarity,  we carry out first in full detail a proof of stability of a single wave, that is, for a single value of $k$. In particular, we verify each of the conditions (A1), (A2), and (S1). In Section \ref{stabmid} we verify stability for $k\in[$\kStableLeft,0.9999983], similar to how we did for a single wave. In Sections \ref{lowunstab} and \ref{upunstab} we verify condition (S1) implies spectral instability for waves corresponding to $k\in $[\kUnstableLowerLeft,\kUnstableLowerRight] and [0.99999839,\kUnstableUpperRight] respectively. Then in Section \ref{strict} we determine that the stability transitions are sharp, and in Section \ref{mainproof}, combining all of the previous results, we give the proof of the main theorem. 

% ----------------------------------------------------------------------
% Chebyshev interpolation of analytic functions
% ----------------------------------------------------------------------

\section{Chebyshev interpolation of analytic functions}\label{chebyshev}

A big part of our strategy will be to use favorable properties of analytic functions to greatly reduce the amount of time needed to compute the stability condition (S1). Analytic interpolation allows us to closely approximate the functions involved in the stability condition with a small number of interpolation nodes. We only need provide a very rough bound on the modulus of the function in a small region in order to make this strategy work.  In this section we provide details for this strategy beginning with a brief review of Chebyshev interpolation of analytic functions; see for example \cite{DY,BBCL,RW,T}.
\subsection{One dimensional interpolation}
Let $f(z)$ be analytic inside and on the stadium 
$
E_{\rho}:= \left\{z\in \C| z = \frac{1}{2}\left(\rho e^{i\theta}+e^{-i\theta}/\rho\right), \theta \in [0,2\pi]\right\},
$
also known as Bernstein's regularity ellipse, where $\rho > 1$; see Figure \ref{fig-stadium}.
Let $p_N(x) = \sum_{n=0}^{N}c_nT_n(x)$ be the interpolating polynomial
of degree $N$ of $f(x)$ with interpolation nodes at the extremal points $x_j = \cos\left(j\pi/N\right)$ or the zeros $x_j = \cos\left(2(j+1)\pi/2(N+1)\right)$ of $T_{N+1}(x)$, where $T_n(x)$ is the nth degree Chebyshev polynomial:
$T_0(x) := 1,$ $T_1(x) := x$, $T_{n+1}(x) := 2xT_n(x)-T_{n-1}(x)$, $(n\geq 2)$.
Define
$
W_{N+1}(z):= 
 (z-x_0)(z-x_1)...(z-x_n).
$
%
%
% figure
\begin{figure}
\begin{center}
\includegraphics[scale=0.4]{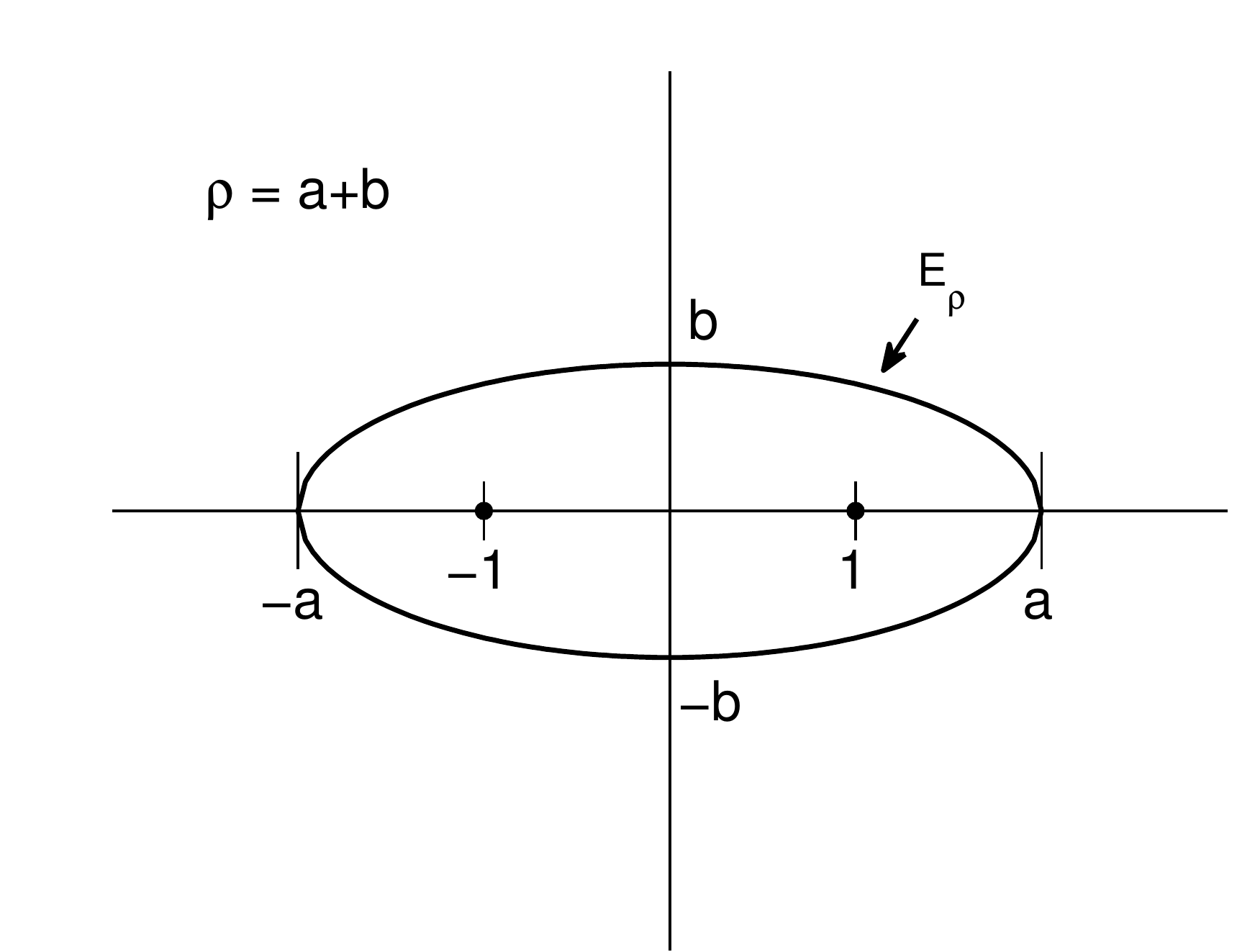}
\end{center}
\caption{Plot of the stadium $E_{\rho}$. }
\label{fig-stadium}
\end{figure}
By Hermite's formula,
\begin{equation}
f(x)-p_N(x) = (2\pi i)^{-1}\int_{E_{\rho}} (W_{N+1}(x)f(z))/(W_{N+1}(z)(z-x))dz
\end{equation}
for $x\in \I:=[-1,1]$. Following \cite{RW}, we have for $x\in \I$ that 
\begin{equation}
\begin{split}
\left|f(x)-p_N(x)\right|&\leq M_{\rho}L_{\rho}(2\pi D_{\rho} \sinh(\eta)\sinh(\eta N))^{-1}
\end{split}
\label{interp-bound1}
\end{equation}
if $x_j = \cos\left(j\pi/N\right)$ and
\begin{equation}
\begin{split}
\left|f(x)-p_N(x)\right|&\leq M_{\rho}L_{\rho}(\pi D_{\rho} \sinh(\eta (N+1)))^{-1}
\end{split}
\label{interp-bound2}
\end{equation}
if $x_j = \cos\left(2(j+1)\pi/2(N+1)\right)$
where 
\begin{equation}\label{interpstuff}
\eta:= \log(\rho),\quad
D_{\rho}:= \frac{1}{2}(\rho + \rho^{-1})-1, \quad
L_p:= \pi \sqrt{\rho^2+\rho^{-2}},\quad
M_{\rho}: = \max_{z\in E_{\rho}}(|f(z)|).
\end{equation}
Here $D_{\rho}$ is a lower bound on the distance of  the stadium $E_{\rho}$ to the line segment $[-1,1]$, and $L_{\rho}$ is an upper bound on the length of $E_{\rho}$.
If $x_j = \cos\left(j\pi/N\right)$, then \cite{RW}
$
2\sinh(\eta)\sinh(\eta N) \leq |W_{N+1}(z)|\leq 2\cosh(\eta)\cosh(\eta N),
$
and if $x_j = \cos\left(2(j+1)\pi/2(N+1)\right)$, then
$
\sinh(\eta(N+1))\leq |W_{N+1}|\leq \cosh(\eta(N+1)).
$
Note that a crude bound $M_{\rho}$, which can be computed with interval arithmetic, still results in exponential decay of error.

\subsection{Two dimensional interpolation}
We now consider interpolation in two dimensions.
Following \cite{Kh} let $\mathbb{P}_N$, $N\in \N$, denote the space of polynomials over $\C$ with degree 
 $\leq N$. Let $\ep = \{\ep_0,\ep_1,...,\ep_N | \ep_j<\ep_{j+1},\ \ep_j \in \I\}$ be interpolation nodes, $f:\I\to \C$, and $L_{\ep,N}$ be the interpolation operator
 defined by $L_{\ep,N}f(\ep_j) = f(\ep_j)$, $L_{\ep,N}f\in \mathbb{P}_N$, and take as norm $||\cdot || = ||\cdot||_{\infty}$.
 Suppose $\hat p\in \mathbb{P}_N$ minimizes $||f-p||$. Note that $\hat p = L_{\ep,N}\hat p$ and thus
 \begin{equation}
 \begin{split}
 ||f-L_{\ep,N}f||&= ||f-\hat p + \hat p - L_{\ep,N}f||
 \leq ||f-\hat p|| + ||L_{\ep,N}\hat p - L_{\ep,N}f||\\
 &\leq ||f-\hat p|| + ||L_{\ep,N}(\hat p - f)||
  \leq  ||f-\hat p|| + ||L_{\ep,N}||||f-\hat p||
 = (1+||L_{\ep,N}||)||f-\hat p||.
\end{split}
\notag
\end{equation} 
The Lebesgue constant is defined as $\Lambda_{\ep,N}:= ||L_{\ep,N}||$. For the Chebyshev polynomials of the first kind, the Lebesgue constant is given by 
$
 \Lambda_{N-1} = \frac{2}{\pi} \log(N) + \frac{2}{\pi}(\gamma + \log(8/\pi)+\alpha_N,\quad 0<\alpha_N<\frac{\pi}{72N^2},
$
where $\gamma = 0.5772...$ is Euler's constant; see \cite{BBCL,Gu}. If the operator $L$ corresponds to tensor product interpolation on the Chebyshev zeros of $T_{N+1}$ in $n$ dimensions, then $||L||_{\infty} = \prod_{j=1}^n$; see \cite{M}.
 
Now let $f:\I^2\to \C$ and define $L_x$ and $L_y$ to be respectively the interpolation operators in the $x$ and $y$ coordinates. For example, $L_xf(x,y) = f(x,y)$ for $y\in \I$ and $x\in \ep$ where for fixed $y$, $L_xf(x,y)$ is a polynomial in $x$ with coefficients $c_j(y)$ and $\ep$ is the set of interpolation nodes for $x$.  
In the two-dimensional case, we let $Lf$ be the polynomial in $\mathbb{P}_{N_1}\times \mathbb{P}_{N_2}$ that equals $f$ on $\ep_x\times \ep_y$. Note that 
$Lf = L_x(L_y f)$. We have the error bounds,
\begin{equation}
\begin{split}
||f-Lf||&= ||f-L_x(L_yf)||
= || f-L_xf + L_xf -L_x(L_yf)||\\
&\leq || f-L_xf ||+|| L_xf -L_x(L_yf)||
= || f-L_xf ||+|| L_x(f -L_yf)||\\
&\leq || f-L_xf ||+ \Lambda_{\ep_x,N_x}||f -L_yf||.
\end{split}
\end{equation}
\subsection{Derivatives of the interpolant}\label{dererror} The $k$th derivative of the interpolant, $p_N(x)$, can be used to approximate the $k$th derivative of $f$. From Hermite's formula,
\begin{equation}
\begin{split}
f^{(k)}(x)-p_N^{(k)}(x) &= \frac{1}{2\pi i}\int_{\Gamma} \pd{^k}{x^k}q(x) \frac{f(z)}{w_{N+1}(z)}dz,
\end{split}
\end{equation}
where $q(x):= w_{N+1}(x)(z-x)^{-1}$.
When the interpolation nodes are the Chebyshev zeros, $w_{N+1}(x) = T_{N+1}(x)$, and $w_{N+1}'(x) = (N+1)U_n(x)$. If $N$ is odd, $U_N(x) = 2\sum_{j \mod2 \equiv 1}$ and if $N$ is even, $U_n(x) = -1 + 2\sum_{j\mod 2 \equiv 0}$. Thus, $|w_{N+1}'(x)| \leq (N+1)(N+3)$. Note that $T_{N+1}''(x) = (n+1)\left( (n+2)T_{n+1}-U_{n+1}\right)/(x^2-1)$, which is bounded by $2(n+1)(n+3)/(x^2-1)$.
Hence, recalling the definitions given in \eqref{interpstuff}, the error when using the Chebyshev zeros for the interpolation nodes is given by,
\begin{equation}\label{eq:derbounds}
\begin{split}
|f'(x)-p_N'(x)|_{\infty, x\in[-1,1]} &\leq \frac{LM}{2\pi \sinh(\eta (N+1))}\left(\frac{(N+1)(N+3)}{D_{\rho}}+\frac{1}{D^2_{\rho}}\right),\\
|f''(x)-p_N''(x)|_{\infty, x\in[-1,1]} &\leq \frac{LM}{2\pi \sinh(\eta (N+1))}\left(\frac{2(N+1)(N+3)}{D_{\rho}(x^2-1)}+\frac{(N+1)(N+3)}{D_{\rho}^2}+\frac{1}{D^3_{\rho}}\right).
\end{split}
\end{equation}

Now suppose that $p(x,y)$ is the interpolant of $f(x,y)$ on $\I^2$ and that $D_k$ is the operator that takes the $k$th derivative with respect to $x$. Then
\begin{equation}
\begin{split}
|D_k f -D_k L f|&=|D_kf-D_kL_yL_xf|
\leq ||D_kf-D_kL_yf||+||D_kL_yf-D_kL_yL_xf||\\
&\leq ||D_k f- L_y(D_kf)||+||L_y||\ ||D_kf-D_kL_xf||.
\end{split}
\end{equation}
Cauchy's integral formula may be used to bound $D_k f$ on a stadium of smaller radius than the one on which $f$ is bounded.
\subsection{Computing the coefficients of the interpolant}
The standard choice of algorithm to obtain the interpolation coefficients is the fast cosine transform. However, with interval arithmetic the speed of algorithms is greatly affected by the cost of switching the rounding mode. In our study, we found that using INTLAB's fast interval matrix multiplication with vectorization may be faster.
In the one dimensional case, the interpolation coefficients are given by \cite{Ba},\\
$$
a_0:= \frac{1}{N+1}\sum_{r= 0}^{N} f(x_r)T_0(x_r),\quad 
a_j := \frac{2}{N+1}\sum_{r=0}^N f(x_r)T_j(x_r),
$$
for $j = 1,..,N$
or by 
$a_0 = \frac{1}{2} V_1$, and $a_j = V_{j+1}$ for $j> 0$ where 
\eqn{
V:= 2 \mat{f(x_0)&f(x_1)&...&f(x_N)}[T_{k}(x_j)]_{k,j=0,1,...,N}.
}{}
In the two dimensional case the coefficients are given by $a_{0,0} = V_{0,0}/4$, $a_{j,0} = V_{j,0}/2$ for $j = 1,...,M$, $a_{0,k} = V_{0,k}/2$ for $k = 1,...,N$, and $a_{j,k} = V_{j,k}$ for $j = 1,...,M$, $k = 1,...,N$ where  
\begin{equation}
V := \frac{4}{(M+1)(N+1)}[T_{k}(x_j)]_{k,j = 0,...,M}[f(x_j,y_q)]_{j = 0,...,M; q = 0,...,N}[T_{r}(y_q)]_{q,r = 0,...,N}.
\end{equation}

\subsection{Evaluating the interpolant}\label{evaluate}
The Chebyshev polynomials satisfy $T_n(x) = \cos(n\cos^{-1}(x))$ so that we may evaluate the polynomial interpolant $p_N(x)$ in terms of $\theta$, $p_N(\theta) = \sum_{n=0}^Nc_n\cos(n\theta)$, where $x = \cos(\theta)$. Evaluating the two-dimensional polynomial interpolant
\begin{equation}
P_{N,M}(x,y)=\sum_{n=0}^N\sum_{m=0}^M c_{nm}T_N(x)T_m(y)=\sum_{n=0}^N\sum_{m=0}^M c_{nm}\cos(n\theta)\cos(m\nu)
\notag
\end{equation}
with interval arithmetic sometimes yields a poor result since $(N+1)\times(M+1)$ intervals must be added. We significantly improve this by Taylor expanding $P_{N,M}$ in the variables $\theta$ and $\nu$ where $x = \cos(\theta)$ and $y = \cos(\nu)$. Taylor expanding to fifth order allows us to compute $p_{N,M}$ and its partial derivatives up to 5th order with small intervals representing a single point in the interval of interest. To obtain an interval representation of the Taylor remainder, we must evaluate $p_{N,M}$ on the full intervals in $\theta$ and $\nu$, but the contribution of the remainder term to the interval width is not significant when the intervals in $\theta$ and $\nu$ are small.

We are also interested in evaluating the integral of the one dimensional interpolant. 
Note that under the transformation $x = \cos(\theta)$,
\begin{equation}
\begin{split}
 \int_{-1}^1 T_N(x) dx & = \int_0^{\pi/2} \cos(n\theta)\sin(\theta)d\theta + \int_{\pi/2}^{\pi}\cos(n\theta)\sin(\theta)d\theta
 = \frac{1+(-1)^n}{1-n^2}.
\end{split}
\end{equation}

 \textbf{Computational detail 1.}
  In practice, we only compute the even indexed interpolation coefficients when integrating since $\int_{-1}^1T_n(x)dx = 0$ for $n$ odd. 
 
 \textbf{Computational detail 2.}
Clenshaw's method is often used for efficient evaluation of a Chebyshev polynomial.  However, this method results in wide intervals when using interval arithmetic. Indeed, if the highest order coefficient of the interpolating polynomial  has an interval error bound of width $\varepsilon>0$ , then by the termination of the Clenshaw algorithm, the width of the interval answer is at least $(2|x|)^{N-2}|x|\eps$. If $x = 1$, and $\eps = 2^{-52}$, then the error interval for $N = 106$ is at least $\frac{1}{\eps}= 2^{52}$.  This demonstrates the unique challenge interval arithmetic can pose. On the other hand, by using the property $T_n(x) = \cos(n\cos^{-1}(x))$ to evaluate the finite Chebyshev series, the interval error $\eps_j$ for the $jth$ coefficient $c_j$ contributes error of at most  $|T_j(x)|\eps_j \leq \eps_j$ and so the interpolation error grows at most only linearly with $N$. As described above, Taylor expanding yields further improvement when evaluating the Chebyshev polynomial on a larger interval.

\textbf{Computational detail 3.}
When carrying out analytic interpolation to approximate a function $f$, it is sometimes advantageous to break the domain up into smaller sub-domains when the domain comes close to a pole of $f$. This increases how large we may take $\rho$ which plays a significant role in minimizing the error terms \ref{interp-bound1} and \ref{interp-bound2}.

%%%%%%%%%%%%%%%%%%%%%%%%%%%%%%%%%%%%%%%%%%%%%%%%%%%%%%%%%%%%%%%%
% Single Wave
%%%%%%%%%%%%%%%%%%%%%%%%%%%%%%%%%%%%%%%%%%%%%%%%%%%%%%%%%%%%%%%%

\section{Stability of a single wave}\label{stabsingle}

In this section we show that assumptions (A1) and (A2) hold and that $\Re(\lambda_1) < 0$ for a single wave $X(k) \in  [  11.30108911018488,  11.30108911018549]$ for $k = 0.99$. 
We begin by providing details about $\lambda_1(\xi)$. We have 
\begin{equation}
	\begin{split}
		\lambda_1(\xi(\alpha))&:= -\frac{\int_0^X (v''(x)+ v''''(x))\bar  v'(x)dx}{\int_0^X  v(x) \bar v'(x)dx}
		=:\frac{f(\alpha)}{g(\alpha)},
	\end{split}
	\label{innerprod}
\end{equation}
where
\begin{equation}
	\begin{split}
		v(x):= \sigma^2(x+i\omega'+\alpha)\sigma^{-2}(x+i\omega')\sigma^2(\alpha)e^{-2(x+i\omega')\zeta(\alpha)},
	\end{split}
	\label{vdef}
\end{equation}
and $\sigma(z)$ and $\zeta(z)$ are respectively the Weierstrass sigma and zeta functions with real half period $\omega$ and purely imaginary half-period  $i\omega'$. Here $v(x)$ is an eigenfunction, derived in \cite{KSF}, of the linearized gKS operator. Further, we have
\begin{equation}
	\begin{split}
		\omega = \frac{\pi}{\kappa},\quad \omega' = \frac{K(\sqrt{1-k^2})\pi}{K(k)\kappa}, \quad \xi = 2i\left(\zeta(\alpha)-\frac{\alpha}{\omega}\zeta(\omega)\right),\quad \lambda_{KdV}(\xi(\alpha))=-4\wp'(\alpha),
		\end{split}
	\label{some_defs}
\end{equation}
where $K=K(k)$ is the complete elliptic integral of the first kind, and $X = 2\omega$ is the period of the traveling wave. The parameter $\alpha \in \omega\Z\times i\R$ determines the Floquet parameter, $\xi(\alpha)$, and $\kappa=\mathcal{G}(k)$ satisfies,
\begin{equation}\label{def:kappa}
\displaystyle
\left(\frac{K(k)\mathcal{G}(k)}{\pi}\right)^2=
   \frac{7}{20}\frac{2(k^4-k^2+1)E(k)-(1-k^2)(2-k^2)K(k)}{(-2+3k^2+3k^4-2k^6)E(k)+(k^6+k^4-4k^2+2)K(k)},
\end{equation}
where $E=E(k)$ is the complete Elliptic integral of the second kind.

%---------------------------------------
% Simplicity 
%---------------------------------------

\subsection{Simplicity of KdV eigenvalues, (A1)}\label{simplicity} In this section we show that for $k = 0.99$, the nonzero KdV eigenvalues of the linearized KdV operator are simple for all Floquet parameters $\xi \in[0,2\pi/X)$ and $\lambda_{KdV} = 0$ only if $\xi = 0\mod \frac{2\pi }{X}$.
The KdV spectra are given by
$
\lambda_{KdV} = -4\wp'(\alpha),
$ where $\wp'$ is the derivative of the  Weierstrass elliptic function $\wp$, and $\alpha = n\omega + i\psi \omega'$. 
%
%
%% Parametrization
%
%
\subsubsection{Parametrization}
In order for the Floquet parameter, $\xi$, given in \eqref{some_defs}  to be real $(\Re(\alpha) = n\omega)$ $n\in \Z$; see \cite{S}.  From the quasi-periodicity of the Weierstrass elliptic functions,  \eqref{vdef} is invariant under the transformation $\alpha \to \alpha + 2n\omega + 2m\omega'$, $n,m\in \Z$, so that we may limit our study to $\alpha = \tilde n\omega + i\psi \omega'$ for $\tilde n = \{0,1\}$ and $\psi \in [-1,1]$.  
 % details for this in work book: 4 Apr 2013 Num 1
By the quasi-periodicity of $\sigma(z)$ with respect to $i\omega'$ and the mirror symmetry property $\sigma(\bar z) = \overline{\sigma(z)}$ and $\zeta(\bar z) = \overline{\zeta(z)}$, the transformation of \eqref{vdef} by  $\beta\to -\beta$ is equivalent to $v(z) \to c\overline{v(z)}$ for some non-zero constant $c\in \C$. Thus, when evaluating the stability condition (S1), it suffices to consider $\alpha = \tilde n\omega + i\psi \omega'$ for $\tilde n = \{0,1\}$ and $\psi \in [0,1]$.

\begin{lemma}
\label{lemma:xi_zero}
The KdV eigenvalue $\lambda_{KdV}(\xi) = 0$ only if $\xi = 0\mod\frac{2\pi}{X}$.
\end{lemma}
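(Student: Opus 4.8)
\textbf{Proof plan for Lemma \ref{lemma:xi_zero}.}

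The plan is to work directly from the explicit formulas in \eqref{some_defs}, namely $\lambda_{KdV}(\xi(\alpha))=-4\wp'(\alpha)$ and $\xi(\alpha)=2i\bigl(\zeta(\alpha)-\tfrac{\alpha}{\omega}\zeta(\omega)\bigr)$, together with the parametrization reduction already established above: it suffices to consider $\alpha=\tilde n\omega+i\psi\omega'$ with $\tilde n\in\{0,1\}$ and $\psi\in[0,1]$. First I would identify exactly when $\wp'(\alpha)=0$. Since $\wp'$ is odd and doubly periodic with periods $2\omega,2i\omega'$, its only zeros in a fundamental cell are the three half-periods $\omega$, $i\omega'$, and $\omega+i\omega'$ (these are precisely the branch points where $\wp$ takes the values $e_1,e_2,e_3$). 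So within our parametrization the candidates are $\alpha=i\omega'$ (i.e.\ $\tilde n=0,\psi=1$), $\alpha=\omega$ (i.e.\ $\tilde n=1,\psi=0$), and $\alpha=\omega+i\omega'$ (i.e.\ $\tilde n=1,\psi=1$); one should also note that the reduction to $\psi\in[0,1]$ forces us to treat the boundary cases with some care, since the eigenfunction \eqref{vdef} has poles when $\alpha\equiv 0$, which is the $\psi=0,\tilde n=0$ corner and corresponds to $\xi=0$.

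The heart of the argument is then to evaluate $\xi$ at each of these three half-period values of $\alpha$ and check that in every case $\xi\equiv 0\bmod 2\pi/X$. For $\alpha=\omega$ this is immediate: $\xi(\omega)=2i(\zeta(\omega)-\zeta(\omega))=0$. For $\alpha=i\omega'$ and $\alpha=\omega+i\omega'$ I would use the Legendre relation $\zeta(\omega)i\omega'-\zeta(i\omega')\omega=\tfrac{\pi i}{2}$ (equivalently $\eta_1\omega'-\eta_2\omega=\tfrac{\pi i}{2}$ in the $\eta_j$ notation). Plugging $\alpha=i\omega'$ into $\xi$ gives $\xi(i\omega')=2i\bigl(\zeta(i\omega')-\tfrac{i\omega'}{\omega}\zeta(\omega)\bigr)=\tfrac{2i}{\omega}\bigl(\omega\zeta(i\omega')-i\omega'\zeta(\omega)\bigr)=\tfrac{2i}{\omega}\cdot\bigl(-\tfrac{\pi i}{2}\bigr)=\tfrac{\pi}{\omega}=\tfrac{2\pi}{X}$, using $X=2\omega$; so indeed $\xi\equiv 0\bmod 2\pi/X$. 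The case $\alpha=\omega+i\omega'$ follows by combining the two previous computations together with the quasi-periodicity shift of $\zeta$ by a period (i.e.\ $\zeta(z+2\omega)=\zeta(z)+2\zeta(\omega)$), which shows $\xi(\omega+i\omega')$ differs from $\xi(i\omega')$ by an integer multiple of $2\pi/X$.

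The main obstacle I anticipate is bookkeeping at the boundary of the reduced parameter region rather than any deep analytic difficulty: one must be careful that the reductions $\alpha\mapsto\alpha+2n\omega+2m\omega'$ and $\psi\in[0,1]$ are genuinely compatible with the multivaluedness of $\xi$ as a function on the cylinder $\omega\Z\times i\R$, and that the half-period $\alpha\equiv 0$ (where $v$ has a pole and $\lambda_{KdV}=-4\wp'(0)$ is also singular, reflecting $\xi=0$) is correctly excluded or handled by continuous extension as in Proposition \ref{p:kdvstab}. Once the zero set of $\wp'$ is pinned down and the Legendre relation is invoked, each remaining computation is a one-line identity. I would close by remarking that the converse inclusion — that $\xi\equiv 0\bmod 2\pi/X$ forces $\alpha$ to be a half-period or $0$ — is not needed for the statement as phrased (it only asserts one direction), but would follow from injectivity of $\xi(\alpha)$ on the relevant interval, which can be read off from $\xi'(\alpha)=2i(\wp(\alpha)-\tfrac{\zeta(\omega)}{\omega})$ having controlled sign along the imaginary axis.
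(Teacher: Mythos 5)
Your plan is essentially the paper's own proof: identify the zeros of $\wp'$ at the three half-periods, note $\xi(\omega)=0$, and invoke the Legendre relation $\omega\zeta(i\omega')-i\omega'\zeta(\omega)=-\tfrac{\pi i}{2}$ to get $\xi(i\omega')=2\pi/X$. The one place you deviate is $\alpha=\omega+i\omega'$: the paper uses the $\zeta$-addition formula to compute $\xi(\omega+i\omega')-\xi(i\omega')=i\,\frac{\wp'(\omega)-\wp'(i\omega')}{\wp(\omega)-\wp(i\omega')}=0$, whereas you invoke quasi-periodicity; be aware that the identity you cite, $\zeta(z+2\omega)=\zeta(z)+2\zeta(\omega)$, does not by itself relate $\zeta(\omega+i\omega')$ to $\zeta(i\omega')$, since these arguments differ by the half-period $\omega$ rather than a full period — you would also need oddness of $\zeta$ and the shift by $2i\omega'$ (or simply the addition formula, as in the paper) to close that step.
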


\begin{proof}
The zeros of $\wp'(\alpha)$, hence of $\lambda_{KdV}$, for $\alpha \in \{0,1\}\times i (-\omega',\omega']$ are precisely $\alpha = \omega,\ i\omega',$ and $ \omega + i\omega'$. Recalling the definition of $\xi$ given in \eqref{some_defs}, we see $\xi(\omega) = 0$.
Using the Weierstrass zeta addition property (see Section \ref{properties}), we find $ \omega \xi(i\omega') = 2i\left(\zeta(i\omega')\omega -i\omega'\zeta(\omega)\right) = 2i\left(\frac{-\pi i}{2}\right)=\frac{2\pi}{x}$.
Using the Weierstrass identities described in Section \eqref{properties}, we find
$
\xi(\omega+i\omega')-\xi(i\omega')= 2i(\zeta(\omega+i\omega') -\zeta(i\omega') -\zeta(\omega))
= i\left( \frac{\wp'(\omega)-\wp'(i\omega')}{\wp(\omega)-\wp(i\omega')}\right)
= 0,$
since $\wp'(\omega) = \wp'(i\omega') = 0$. Thus $\xi(\omega+i\omega') = \frac{2\pi}{X}$.
\end{proof}

\noindent Next we show that the nonzero KdV eigenvalues of the linearized KdV operator are simple for all Floquet parameters $\xi \in[0,2\pi/X)$. We begin by showing some important characteristics of $\lambda_{KdV}(\alpha)$ and $\xi(\alpha)$ which are demonstrated in Figure ~\ref{fig:simplicity}. 

\begin{center}
\begin{figure}[htbp]
	$
	\begin{array}{lr}
	   (a) \includegraphics[scale=.3]{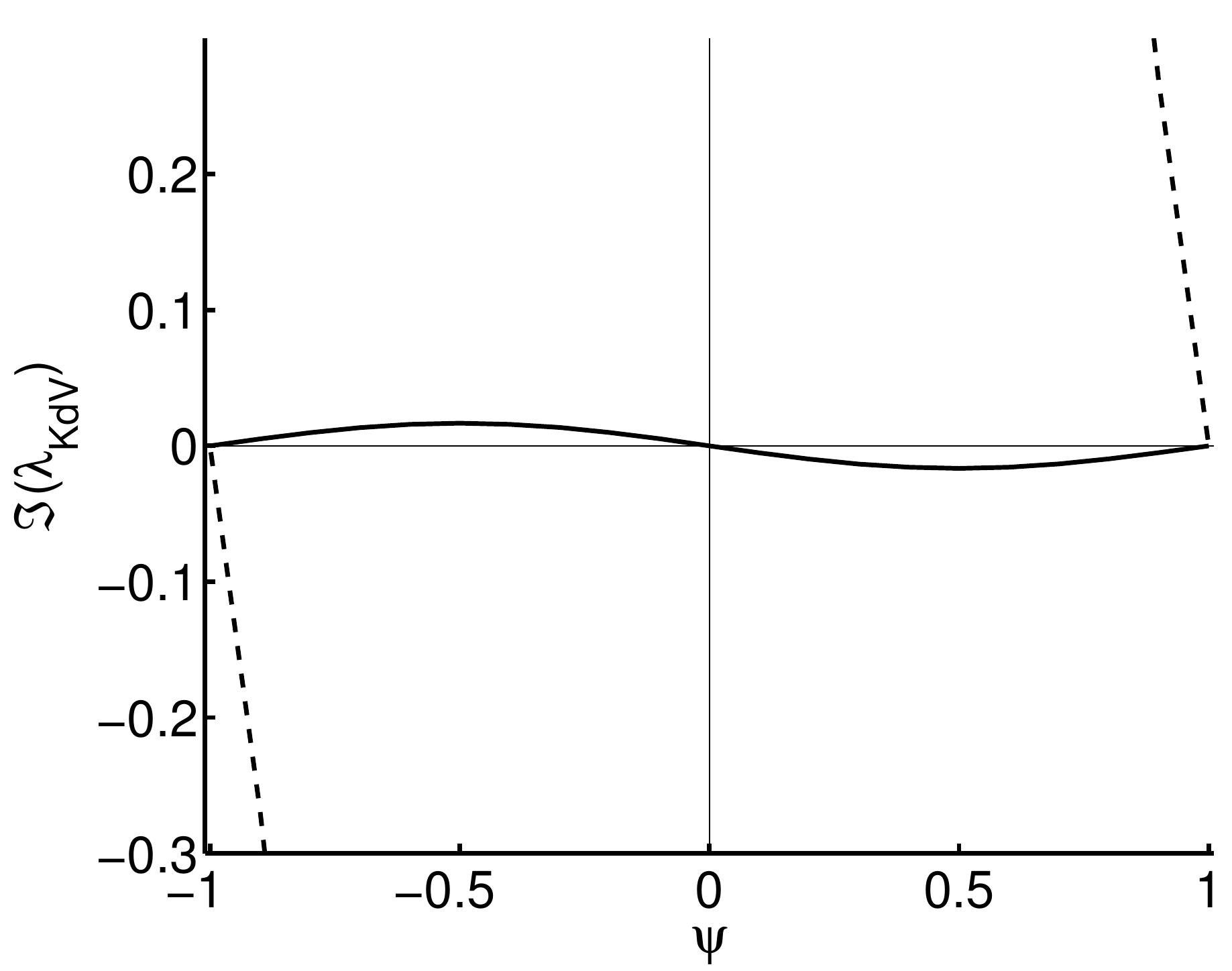} & (b)  	\includegraphics[scale=.3]{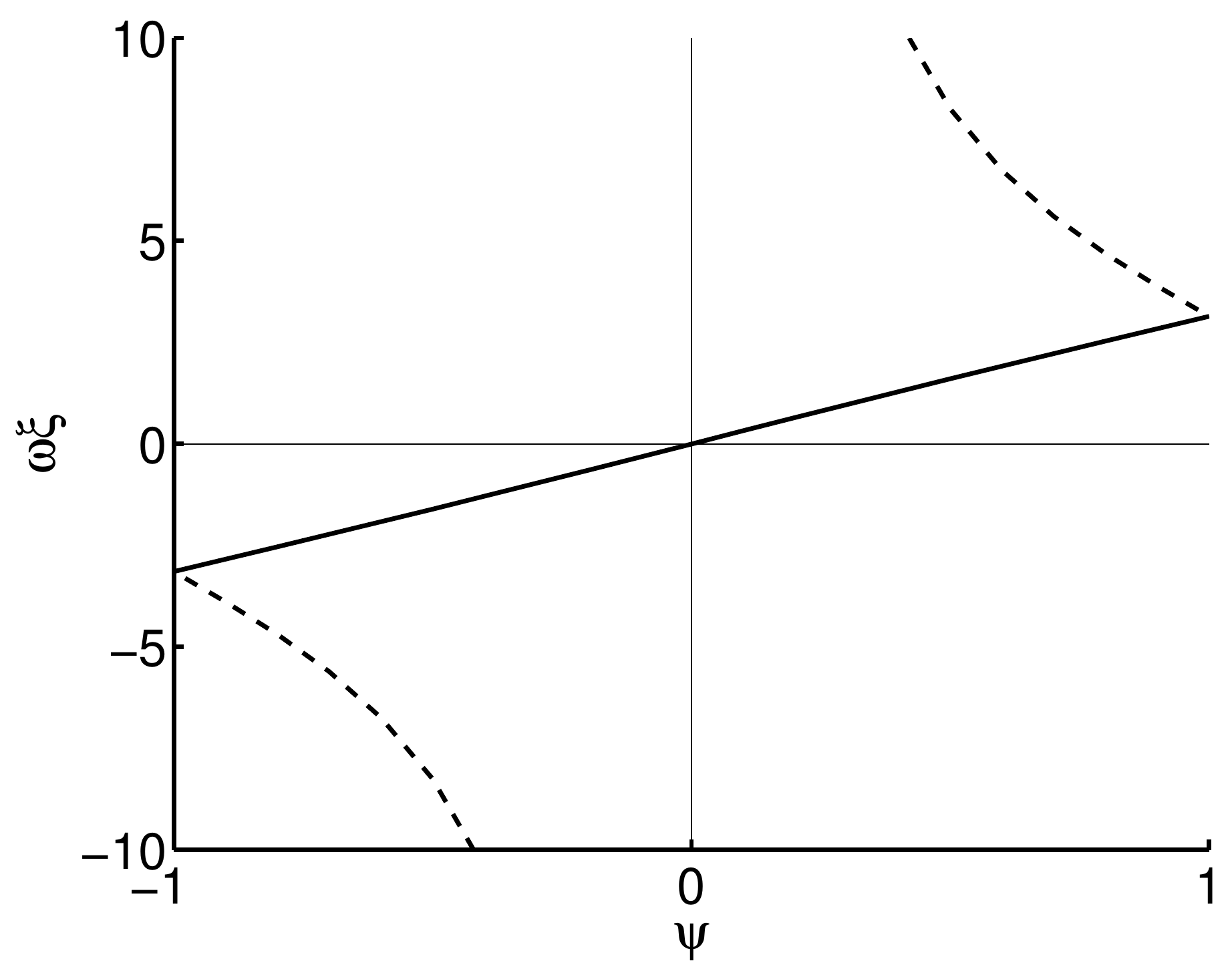}
	\end{array}
	$
    \caption{Solid lines correspond to $\alpha = \omega +i\psi \omega'$ and dashed lines to $\alpha = i\psi\omega'$. Here $k = 0.99$ (a) Plot of $\Im(\lambda_{KdV}(\alpha))$ against $\psi$. (b) Plot of $\xi(\alpha)$ against $\psi$.
    } \label{fig:simplicity}
\end{figure}
\end{center}

\begin{lemma}
\label{lemma:properties}
The following hold:
(1) $\lambda_0(\tilde n\omega-i\beta)= -\lambda_0(\tilde n\omega+i\beta)$, \ $\beta \in (0,\omega']$,
(2) $\pd{}{\beta}\Im\lambda_0(\omega+i\beta)|_{\beta=0} < 0$,
(3) $\pd{}{\beta}\Im\lambda_0(i\beta)<0$,\ $\beta \in (0,\omega']$,
(4) $\xi(\tilde n\omega-i\beta) = -\xi(\tilde n\omega + i\beta)$ for $\tilde n \in\{0,1\}$ and $\beta\in(0,\omega']$,
(5) $\pd{}{\beta}\xi(i\beta)<0$ for $\beta\in(0,\omega']$, 
(6) $\pd{}{\beta}\xi(\omega+i\beta)>0$ for $\beta\in[0,\omega']$,
\end{lemma}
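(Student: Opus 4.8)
\textbf{Proof proposal for Lemma \ref{lemma:properties}.}

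The plan is to reduce all six assertions to explicit identities for the Weierstrass $\wp$, $\wp'$, $\zeta$ functions and then to a sign analysis, exploiting heavily the fact that $\alpha$ ranges over the ``half-period rectangle'' $\{0,1\}\times i(0,\omega']$, where these functions take real values. First I would record the basic reflection principle: since the lattice invariants $g_2,g_3$ are real (indeed the lattice is rectangular, with half-periods $\omega\in\R$ and $i\omega'$ purely imaginary), one has $\wp(\bar z)=\overline{\wp(z)}$, $\wp'(\bar z)=\overline{\wp'(z)}$, $\zeta(\bar z)=\overline{\zeta(z)}$. Applied to $\alpha=\tilde n\omega+i\beta$ with $\tilde n,\beta$ real, conjugation sends $\alpha\mapsto \tilde n\omega - i\beta$, which (using quasi-periodicity to absorb the $2\tilde n\omega$ shift when $\tilde n=1$) gives $\wp'(\tilde n\omega-i\beta)=\overline{\wp'(\tilde n\omega+i\beta)}$; combined with $\lambda_{KdV}=-4\wp'(\alpha)$ this yields (1), and the analogous computation with $\zeta$ (and the $\zeta(z+2\omega)=\zeta(z)+2\zeta(\omega)$ relation, noting $\zeta(\omega)$ is real) yields (4). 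Along the segments in question one checks $\wp'$ is purely imaginary and $\xi$ is real, so ``$\Im\lambda_0$'' in (2),(3) and ``$\xi$'' in (5),(6) are the natural real quantities to differentiate.

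For the monotonicity statements (2),(3),(5),(6) I would compute the relevant derivatives in closed form and determine their signs. For (5) and (6): from $\xi(\alpha)=2i(\zeta(\alpha)-\tfrac{\alpha}{\omega}\zeta(\omega))$ and $\zeta'=-\wp$ we get $\tfrac{d}{d\beta}\xi(\tilde n\omega+i\beta)=2i\cdot i\,(-\wp(\tilde n\omega+i\beta)+\tfrac{1}{\omega}\zeta(\omega)) = 2(\wp(\tilde n\omega+i\beta)-\eta_1/\omega)$ with $\eta_1:=\zeta(\omega)$. So (5) and (6) reduce to showing $\wp(i\beta)<\eta_1/\omega$ for $\beta\in(0,\omega']$ and $\wp(\omega+i\beta)>\eta_1/\omega$ for $\beta\in[0,\omega']$. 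On these two vertical segments $\wp$ is real and strictly monotone in $\beta$ (standard: $\wp$ maps each edge of the half-period rectangle monotonically onto a real interval, taking the values $e_2=\wp(i\omega')<e_3=\wp(\omega+i\omega')<e_1=\wp(\omega)$ at the corners and $\to+\infty$ as $\beta\to0$ on the $i\beta$ edge), so it suffices to check the inequality at one endpoint of each segment and use monotonicity; in fact $\wp(i\beta)$ decreases from $+\infty$ to $e_2$ while $\wp(\omega+i\beta)$ increases from $e_1$ to... wait—rather, $\wp(\omega+i\beta)$ runs monotonically between $e_1$ and $e_3$, so one just needs $e_3>\eta_1/\omega$ and $e_2<\eta_1/\omega$, which I would verify either from the Legendre-type relation $\eta_1\omega' - \eta_2\omega = i\pi/2$ together with known sign data, or simply by direct interval-arithmetic evaluation at $k=0.99$ (consistent with the paper's overall method). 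For (2) and (3): $\tfrac{d}{d\beta}\wp'(\tilde n\omega+i\beta)=i\,\wp''(\tilde n\omega+i\beta)=i(6\wp^2-\tfrac12 g_2)(\tilde n\omega+i\beta)$, so $\tfrac{d}{d\beta}\Im\lambda_0 = -4\,\mathrm{Re}\,\wp'' \cdot(\cdots)$—more precisely, since $\wp'$ is purely imaginary on the segment, $\Im\lambda_0(\tilde n\omega+i\beta)=-4\,\mathrm{Im}\,\wp'$, and $\tfrac{d}{d\beta}(-4\,\mathrm{Im}\,\wp') = -4\,\mathrm{Re}(i\wp'')= $ (appropriate real multiple of $6\wp^2-\tfrac12 g_2$). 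Thus (2) and (3) reduce to the sign of $6\wp^2-\tfrac12 g_2 = 6(\wp-e_1)(\wp-e_2)(\wp-e_3)/\wp' + \cdots$—cleaner to just use $\wp''=6\wp^2-\tfrac12 g_2$ and evaluate the sign of this real quantity at the relevant $\wp$-values, again either by hand via the ordering $e_2<e_3<e_1$ and the location of $\pm\sqrt{g_2/12}$ relative to them, or by interval arithmetic.

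The main obstacle I anticipate is not any single identity but the bookkeeping of \emph{which real interval $\wp$ traverses on each segment and in which direction}, together with pinning down the signs of $\mathrm{Im}\,\wp'$ and of $\wp''$ along those segments precisely enough to conclude strict inequalities — this requires care because the naive real-analysis picture of $\wp$ on a rectangular lattice has several sign conventions (the ordering $e_1>e_3>e_2$ for a real rectangular lattice with $\wp(\omega)=e_1$ etc.) that must be nailed down consistently with the paper's normalization $\omega=\pi/\kappa$, $\omega'=K(\sqrt{1-k^2})\pi/(K(k)\kappa)$. Given that the paper is built on rigorous interval computation, the cleanest route for the borderline sign checks (e.g. $e_2<\eta_1/\omega<e_3$, and the sign of $6\wp^2-\tfrac12 g_2$ at the endpoints) is to discharge them by interval evaluation at $k=0.99$; the structural/monotonicity content — reflection symmetry (1),(4), and strict monotonicity of $\wp$ along each edge of the half-period rectangle — is classical and I would cite it rather than reprove it. I would organize the write-up as: (a) reflection identities $\Rightarrow$ (1),(4); (b) derivative formulas for $\xi$ and $\lambda_{KdV}$ in $\beta$; (c) monotonicity of $\wp$ on the two vertical edges plus endpoint sign checks $\Rightarrow$ (5),(6); (d) sign of $\wp''$ on the two edges $\Rightarrow$ (2),(3).
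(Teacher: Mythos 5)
Your overall strategy is sound and it is genuinely \emph{different} from the paper's proof of Lemma \ref{lemma:properties} for the monotonicity assertions (2),(3),(5),(6). The paper proves (1),(4) by parity/quasi-periodicity, (2) by evaluating $\wp''(\omega)=6\wp(\omega)^2-g_2/2$ explicitly via $\wp(\omega)=3g_3/g_2$ and the discriminant identity $g_2^3-27g_3^2=16(e_1-e_2)^2(e_1-e_3)^2(e_2-e_3)^2$, (3) and (5) by $q$-series expansions of $\wp'(z+i\omega')$ and $\zeta(z)$ (term-by-term sign inspection), and (6) by direct interval-arithmetic evaluation on 100 subintervals. Your unified reduction --- $\partial_\beta\xi(\tilde n\omega+i\beta)=2\bigl(\wp(\tilde n\omega+i\beta)+\eta_1/\omega\bigr)$ together with monotonicity of $\wp$ on the vertical edges of the half-period rectangle, and $\partial_\beta\Im\lambda_0=-4\,\wp''$ along those edges --- is cleaner conceptually and treats all four by one mechanism. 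It does ultimately require endpoint sign checks that, as you say, would be discharged by interval arithmetic, which puts it on the same footing as the paper's treatment of (6); so the trade-off is: your route is structurally simpler but does not yield the closed-form sign verification the paper gets for (2),(3),(5) from the $q$-series/discriminant computations.

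However, several of your intermediate formulas have sign/orientation errors that, as written, would make the endpoint checks come out \emph{false}, so they are not merely ``bookkeeping to be nailed down later.'' Specifically: (a) from $\zeta'=-\wp$, the correct derivative is $\partial_\beta\xi=2i\cdot i\bigl(\zeta'(\alpha)-\eta_1/\omega\bigr)=2\bigl(\wp(\alpha)+\eta_1/\omega\bigr)$, not $2(\wp-\eta_1/\omega)$; with your sign, (6) at $\beta=\omega'$ asks for $e_3>\eta_1/\omega$, which is false (indeed $e_3<\eta_1/\omega$). (b) Since $\wp(z)\sim z^{-2}$ and $(i\beta)^2=-\beta^2$, one has $\wp(i\beta)\to-\infty$ as $\beta\to0^+$, increasing to $e_2$, not ``$\to+\infty$'' as you state; with your orientation, $\sup_{(0,\omega']}\wp(i\beta)=+\infty$ and no finite threshold would work. (c) $\partial_\beta\bigl(-4\Im\wp'\bigr)=-4\Im\bigl(i\wp''\bigr)=-4\Re\wp''=-4\wp''$ on the segments (where $\wp''$ is real); you wrote $-4\Re(i\wp'')$, which equals $+4\Im\wp''=0$ there and would make (2),(3) vacuous. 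Once these three are corrected the plan goes through: (5) reduces to $e_2<-\eta_1/\omega$, (6) to $e_3>-\eta_1/\omega$, and (2),(3) to $\wp''>0$ on the relevant segments, all of which are checkable by interval arithmetic (or, for (2), by the paper's discriminant identity).
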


\begin{proof}
To prove assertion (1), we recall that $\xi(\alpha) = 2i(\zeta(\alpha)-\alpha\zeta(\omega)/\omega)$, where $\alpha = n\omega + i\beta$. Since $\zeta(z) = -\zeta(-z)$, $\xi(n\omega -i\beta') = 2i(\zeta(n\omega-i\beta')-(n\omega-i\beta')\zeta(\omega)/\omega)=
2i(-\zeta(-n\omega+i\beta')-(n\omega-i\beta')\zeta(\omega)/\omega).
$
Using the quasi-periodicity of the Weierstrass zeta function yields $\xi(n\omega -i\beta') = 2i(-(\zeta(n\omega+i\beta')-2n\zeta(\omega))-(\tilde n\omega-i\beta')\zeta(\omega)/\omega)= -\zeta(n\omega+i\beta')+(\tilde n\omega+i\beta')\zeta(\omega)/\omega
=-\xi(n\omega+i\beta')$.

To establish assertion (2), we note that $\lambda_0(\omega+i\beta) = -4i\wp'(\omega+i\beta)$ so that $\pd{}{\beta}\lambda_0(\omega+i\beta) = -4i(6\wp^2(\omega+i\beta)-g_2/2)$. Then $\pd{}{\beta}\lambda_0(\omega+i\beta)|_{\beta=0} = -4i(6\wp^2(\omega)-g_2/2)= -4i(6(3g_3/g_2)^2-g_2/2)=(-2i/g_2^2)(27g_3^2-g_2^3)= (-2i/g_2^2)(16(e_1-e_2)^2(e_3-e_1)^2(e_3-e_2)^2)$ where $e_i = \wp(\omega_i)$. Thus $\pd{}{\beta}\Im(\lambda_0(\omega+i\beta))|_{\beta=0}<0$.

We establish assertion (3) by using the q-series representation of $\wp'(z+i\omega')$ (see Section \eqref{qseries}) whereby we have
$
\lambda_0(i\beta)  = - 4\wp'(i\beta)
= -4\wp'(i(\beta-\omega')+i\omega')
= -4\left(\frac{2\pi^3}{\omega^3}\right)\sum_{k=1}^{\infty} \frac{k^2q^k}{1-q^{2k}} \sin\left(\frac{ik\pi(\beta-\omega')}{\omega}\right).
$
Then$\frac{\partial}{\partial \beta} \lambda_0(i\beta) =$ $ -8i(\frac{\pi}{\omega})^4\sum_{k=1}^{\infty} \frac{k^3q^k}{1-q^{2k}}\cos\left(\frac{ik\pi(\beta-\omega')}{\omega}\right).$
Note that $\cos\left(\frac{ik\pi(\beta-\omega')}{\omega}\right) \geq 0$ for each $k$ and so $\Im(\frac{\partial}{\partial \beta}\lambda_0(i\beta)) < 0$. 

To establish assertion (4), we recall that $\xi(\alpha) = 2i\left(\zeta(\alpha)-\frac{\alpha}{\omega}\zeta(\omega)\right)$, $\alpha = n\omega + i\beta$. Then, noting that $\zeta(-z) = -\zeta(z)$ and using quasi-periodicity of the Weierstrass zeta function (see Section \eqref{properties}), we find
\begin{equation}
\begin{split}
\xi(n\omega -i\beta) &= 2i\left(\zeta(n\omega-i\beta)-\frac{n\omega-i\beta}{\omega}\zeta(\omega)\right)\\
&= 2i\left(-\zeta(-n\omega+i\beta)-\frac{n\omega-i\beta}{\omega}\zeta(\omega)\right)\\
&= 2i\left(-\left(\zeta(n\omega+i\beta)-2n\zeta(\omega)\right)-\frac{n\omega-i\beta}{\omega}\zeta(\omega)\right)\\
&  = -\zeta(n\omega+i\beta)+\frac{n\omega+i\beta}{\omega}\zeta(\omega) =-\xi(n\omega+i\beta).
\notag
\end{split}
\end{equation}

To prove assertion (5), 
we note that the Weierstrass zeta function in its q-series form is given by
$
\zeta(z) = \frac{\zeta(\omega)z}{\omega} + \frac{\pi}{2\omega}\cot(\pi z/(2\omega))+\frac{2\pi}{\omega}\sum_{k=1}^{\infty} \frac{q^{2k}}{1-q^{2k}}\sin(k\pi z/\omega),
$
where $q = e^{-\pi \omega'/\omega}$. Substituting this into the definition of the Floquet parameter yields
\begin{equation}
\begin{split}
\xi(i\beta) &= 2i(\zeta(i\beta)-\frac{i\beta}{\omega}\zeta(\omega))\\
&= 2i\left(\frac{\pi}{2\omega}\cot(i\pi\beta/2\omega)+\frac{2\pi}{\omega}\sum_{k=1}^{\infty} \frac{q^{2k}}{1-q^{2k}}\sin (k\pi i \beta/\omega)\right).
\end{split}
\end{equation}
Then
$
\pd{}{\beta}\xi(i\beta) = \frac{2\pi^2}{\omega^2}\left( \frac{1}{4\sin^2\left(\frac{i\pi\beta}{2\omega}\right)}-2\sum_{k=1}^{\infty} \frac{q^{2k}k}{1-q^{2k}}\cos\left(\frac{i k\pi \beta}{\omega}\right)\right).
$
Noting that $\sin^2(i\pi\beta/2\omega)<0$ and $\cos^2(k\pi i\beta/\omega)\geq 0$ for $\beta \neq 0$, $\beta \in \R$, we see that $\pd{}{\beta}\xi(i\beta)<0$.

We establish assertion (6) numerically. Using interval arithmetic to evaluate $\pd{}{\beta}\omega \xi(\omega+i\beta)$ on 100 evenly spaced subintervals of $[0,\omega']$, we find $\pd{}{\beta}\omega \xi(\omega+i\beta)\geq 2.983065564768185$ for $\beta \in[0,\omega']$.
\end{proof}
We now define $h(x,y):= (ci\lambda_0(\omega-ix\omega')-ci\lambda_0(iy\omega'))^2+(\omega\xi(\omega-ix\omega')-\omega\xi(iy\omega')-2\pi)^2$ where $c = -8\omega^3/(\pi\vartheta_1'(0))^3 $ for use in the following lemma.  

\begin{lemma}
\label{lemma:num_properties}
The following hold for $\psi_0 = 0.98$: (1) $h(1,1)=0$, (2)  $|\lambda_0(\omega+i\psi\omega')|<\lambda_0(i\psi_o\omega')$ for $\psi \in [-1,0]$, (3) $\xi(i\psi_0\omega')<3\pi/X$, (4) $h(x,y)$ is strictly convex for $(x,y)\in [0,1]\times [\psi_0,1]$, (5) $h(x,y)\neq 0$ for $(x,y)\in [0,1]\times[\psi_0,1]$ except when $(x,y) = (1,1)$.  
\end{lemma}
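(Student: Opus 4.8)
The plan is to handle the five items almost separately: (1) is an exact identity; (2) and (3) are single interval-arithmetic evaluations; (4) is an interval-arithmetic certification that the Hessian of $h$ is positive definite on the strip $R:=[0,1]\times[\psi_0,1]$; and (5) will drop out of (1), (4), and the nonnegativity of $h$, with no further computation. For (1): $\omega$, $i\omega'$, $\omega+i\omega'$ are exactly the zeros of $\wp'$ in a fundamental cell, and $\omega-i\omega'$ is congruent to $\omega+i\omega'$ modulo the period lattice $2\omega\Z+2i\omega'\Z$, so $\lambda_0(\omega-i\omega')=\lambda_0(i\omega')=0$ and the first square in $h(1,1)$ vanishes. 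For the second square I would combine $\xi(\omega)=0$, the value of $\xi(i\omega')=\xi(\omega+i\omega')$ obtained from Legendre's relation as in the proof of Lemma~\ref{lemma:xi_zero}, and the reflection identity $\xi(\omega-i\omega')=-\xi(\omega+i\omega')$ from Lemma~\ref{lemma:properties}(4); the constant $c$, the factor $i$, and the shift $2\pi$ in the definition of $h$ are arranged precisely so that this bracket collapses to $0$. Hence $h(1,1)=0$ by bookkeeping of already-proved identities.

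For (2) and (3) I would use that $\lambda_0$ (via $\wp'$) and $\xi$ (via $\zeta$) are given by the rapidly convergent $q$-series written out in the proof of Lemma~\ref{lemma:properties} (with nome $q=e^{-\pi\omega'/\omega}$, which is $\approx 0.23$ for $k=0.99$), so a truncation with a rigorous geometric tail bound yields tight interval enclosures away from the pole of $\wp'$ at the origin. For (2) I would first use $\lambda_0(\omega-i\beta)=-\lambda_0(\omega+i\beta)$ (Lemma~\ref{lemma:properties}(1)) to replace $\psi\in[-1,0]$ by $\psi\in[0,1]$, then partition $[0,1]$ into finitely many subintervals, enclose $|\lambda_0(\omega+i\psi\omega')|$ on each, and check every enclosure lies below the enclosure of $\lambda_0(i\psi_0\omega')$; (3) is the single evaluation of $\xi$ at $\psi_0=0.98$ compared against $3\pi/X$. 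Where a sub-box comes near the pole one subdivides further or enlarges the radius $\rho$ of the regularity ellipse (cf.\ Computational detail~3), though for these two items the arguments stay well away from it.

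Item (4) is the crux, and I expect the real difficulty to live here. The second $x$- and $y$-derivatives of $h$ are polynomial expressions in the elliptic functions $\wp$, $\wp'$, $\zeta$ and their derivatives through third order, together with the chain-rule constants $\pm\omega'$, each analytic on a complex neighborhood of the compact strip $R$ and hence enclosed in interval arithmetic through its $q$-series, so $h_{xx}$, $h_{yy}$, $h_{xy}$ are computable. I would then subdivide $R$ into finitely many closed subrectangles and, on each, verify $h_{xx}>0$ and $h_{xx}h_{yy}-h_{xy}^2>0$ by checking the corresponding interval enclosures lie strictly in $(0,\infty)$; since the enclosures tighten as the subrectangles shrink, a finite subdivision works. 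To keep that subdivision --- hence the run time --- feasible against the very large sup-norm bounds these elliptic functions carry near the half-period $i\omega'$, I would replace direct evaluation by two-dimensional Chebyshev interpolation of $h$ and its derivatives on each subrectangle (Section~\ref{chebyshev}, using the derivative bounds \eqref{eq:derbounds}), leaning on the bootstrap that balances a crude modulus bound $M_{\rho}$ against the exponentially small interpolation error. Striking this balance, and choosing the subdivision, is the main obstacle; the rest is routine.

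Item (5) then follows softly. The factor $ci$ is inserted so that both summands of $h$, namely $ci\bigl(\lambda_0(\omega-ix\omega')-\lambda_0(iy\omega')\bigr)$ and $\omega\xi(\omega-ix\omega')-\omega\xi(iy\omega')-2\pi$, are real-valued for real $(x,y)$ --- a consequence of $\wp'(\bar z)=\overline{\wp'(z)}$, $\zeta(\bar z)=\overline{\zeta(z)}$, oddness of $\wp'$ and $\zeta$, and the reflection identities of Lemma~\ref{lemma:properties}(1),(4) --- so $h\geq 0$ on $R$. A nonnegative function that is strictly convex on a convex set has at most one minimizer, and by (1) its minimum value $0$ is attained at the corner $(1,1)$; hence $(1,1)$ is the unique minimizer and $h>0$ on $R\setminus\{(1,1)\}$, which is (5).
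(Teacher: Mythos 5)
Your proposal is correct and follows essentially the same route as the paper: item (1) is an exact identity read off from the earlier lemmata on $\wp'$ and $\xi$; items (2)–(4) are interval-arithmetic certifications (the paper verifies (2) on $\psi\in[0,1]$, tacitly using the reflection symmetry you cite explicitly; for (4) it records the lower bounds $h_{xx}\geq 17.3$ and $h_{xx}h_{yy}-h_{xy}^2\geq 1.05\times 10^4$); and (5) is deduced from (1) and (4). You fill in one small but genuine gap: the paper's "(5) By (1) and (4)" implicitly relies on $h\geq 0$, and your observation that the constant $ci$ makes both summands real (so $h$ is a sum of two real squares) is exactly the missing ingredient needed to turn strict convexity plus $h(1,1)=0$ into uniqueness of the zero. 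Your suggestion to do item (4) by Chebyshev interpolation rather than brute subdivision is a reasonable implementation choice, but for this particular lemma the paper appears to evaluate the Hessian entries by direct interval arithmetic on a fixed grid, which suffices; the interpolation/bootstrap machinery is reserved for the inner-product quantities in the stability condition where the sup-norm constants are far worse.
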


\begin{proof}
By Lemma \ref{lemma:properties} we have $h(1,1)= 0$. Properties (2)-(4) were proven numerically using interval arithmetic. (2) We find that $\lambda_0(i\psi_0\omega')\geq 0.05099537458926759$ and that $|\lambda_0(\omega+i\psi\omega')| \leq 0.01733062797697513$ for $\psi \in [0,1]$. (3) We find $\omega\xi(i\psi_0\omega') \leq 3.289285089296752<3\pi\omega/X = 3\pi/2$. (4) We find $h_{xx}\geq 17.30449947025409 $ and $\Delta = h_{xx}h_{hh} - h^2_{xy} \geq 10504.76128061587$ for all $(x,y) \in [0,1]\times [\psi_0,1]$. (5) By (1) and (4) just shown.  
\end{proof}

With these general properties in place, we are ready to show simplicity.
\begin{proposition}\label{p:simplicity}
For $k = 0.99$, the nonzero KdV eigenvalues are simple.
\end{proposition}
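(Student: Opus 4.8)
The plan is to show that the only possible coincidences of nonzero KdV eigenvalues across distinct Floquet parameters $\xi \in [0, 2\pi/X)$ are excluded by the monotonicity and convexity facts already assembled in Lemmas~\ref{lemma:properties} and~\ref{lemma:num_properties}. The strategy splits into two cases depending on whether the two values of $\alpha$ producing the same eigenvalue lie on the same vertical line $\Re\alpha = \tilde n\omega$ or on different ones. First I would reduce, using the parametrization discussion preceding Lemma~\ref{lemma:xi_zero}, to $\alpha = \tilde n\omega + i\psi\omega'$ with $\tilde n \in \{0,1\}$ and $\psi \in [0,1]$; recall also $\lambda_{KdV}(\alpha) = -4\wp'(\alpha)$ and that $\wp'$ is purely imaginary on each of these two vertical segments (this is what makes the $\Im\lambda_0$ plots in Figure~\ref{fig:simplicity} the whole story). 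Lemma~\ref{lemma:xi_zero} already disposes of the value $\lambda_{KdV} = 0$, so throughout we take $\psi \in (0,1)$ away from the three zeros.

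For the same-line case, suppose $\lambda_0(\tilde n\omega + i\psi_1\omega') = \lambda_0(\tilde n\omega + i\psi_2\omega')$ with $\psi_1 \ne \psi_2$ in $(0,1)$. On the line $\tilde n = 0$, assertion (3) of Lemma~\ref{lemma:properties} gives strict monotonicity of $\Im\lambda_0(i\beta)$ on $(0,\omega']$, so no such coincidence is possible; combined with the odd symmetry (1), this also handles $\psi$ of opposite signs (the only way to get equality there is $\psi_1 = -\psi_2$ with $\lambda_0 = 0$, already excluded). On the line $\tilde n = 1$, assertion (2) gives the sign of the derivative only at $\beta = 0$, so here I would use instead the comparison in Lemma~\ref{lemma:num_properties}(2): for $\psi$ outside a neighborhood of $\psi_0 = 0.98$ the modulus $|\lambda_0(\omega + i\psi\omega')|$ is uniformly small, while near $\psi = 1$ the function $h$ of Lemma~\ref{lemma:num_properties} is strictly convex (assertion (4)) and vanishes only at $(1,1)$ (assertion (5)), which pins down the unique crossing of the two branches and shows that away from it the $\tilde n = 1$ branch is injective in the relevant range — more precisely, strict convexity of $h$ on $[0,1]\times[\psi_0,1]$ forces the curve traced by $\lambda_0(\omega + i\psi\omega')$ to meet the curve $\lambda_0(i\psi\omega')$ only at the single point $\psi = 1$, and a separate interval-arithmetic monotonicity check on $[0,\psi_0]$ (of the kind used to prove assertion (6)) rules out self-intersection on the lower part of the $\tilde n=1$ line.

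For the cross-line case, suppose $\lambda_0(\omega + i\psi_1\omega') = \lambda_0(i\psi_2\omega')$ with the common value nonzero. The key observation is that the two branches live in disjoint regions except near $\psi_2 = 1$: by Lemma~\ref{lemma:num_properties}(2), $|\lambda_0(\omega + i\psi_1\omega')| \le 0.0173\ldots < 0.0509\ldots \le \lambda_0(i\psi_0\omega') \le \lambda_0(i\psi_2\omega')$ for $\psi_2 \in [0,\psi_0]$ (using monotonicity from Lemma~\ref{lemma:properties}(3)), so equality is impossible unless $\psi_2 \in (\psi_0,1]$. On that remaining window I would invoke the function $h(x,y) = (\,\mathrm{const}\cdot(\lambda_0(\omega - ix\omega') - \lambda_0(iy\omega'))\,)^2 + (\omega\xi(\omega - ix\omega') - \omega\xi(iy\omega') - 2\pi)^2$: a coincidence of eigenvalues at equal Floquet parameter would make \emph{both} squared terms vanish, i.e.\ $h(x,y) = 0$, and Lemma~\ref{lemma:num_properties}(5) says this happens on $[0,1]\times[\psi_0,1]$ only at $(x,y) = (1,1)$, where $\lambda_{KdV} = 0$ — contradicting nonzeroness. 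Here the role of the $2\pi$ shift in the second term of $h$ is exactly to account for the fact (Lemma~\ref{lemma:xi_zero}) that $\xi(\omega + i\omega')$ and $\xi(i\omega')$ differ by $2\pi/X$, so matching Floquet parameters on the two lines means $\omega\xi(\omega - ix\omega')$ and $\omega\xi(iy\omega') + 2\pi$ agree. Finally I would assemble the three exclusions (cross-line, same-line $\tilde n=0$, same-line $\tilde n=1$) together with assertion (3) of Lemma~\ref{lemma:num_properties} ($\xi(i\psi_0\omega') < 3\pi/X$, which guarantees the interval-arithmetic scan on $[\psi_0,1]$ actually covers the full range of Floquet parameters not reached on $[0,\psi_0]$) to conclude that the map $\alpha \mapsto \lambda_{KdV}(\alpha)$ is injective on nonzero values over the fundamental Floquet domain, i.e.\ the nonzero KdV eigenvalues are simple.

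The main obstacle I anticipate is the $\tilde n = 1$ branch: unlike the $\tilde n = 0$ line, we do not have a clean closed-form monotonicity statement for $\Im\lambda_0(\omega + i\beta)$ on all of $(0,\omega']$ (only the derivative sign at $\beta = 0$ from assertion (2)), so the argument there genuinely depends on the rigorous interval-arithmetic inputs — the uniform smallness bound in Lemma~\ref{lemma:num_properties}(2) on $[0,1]$, the strict convexity of $h$ near $\psi = 1$, and a supplementary monotonicity scan on $[0,\psi_0]$ — rather than on a soft analytic argument. Getting the pieces to overlap (i.e.\ checking that "$\psi$ near $\psi_0$" handled by convexity of $h$ and "$\psi$ on $[0,\psi_0]$" handled by the scan together leave no gap) is the bookkeeping step that requires care, and the constants $0.98$, $0.0173\ldots$, $0.0509\ldots$, $3\pi/X$ in Lemma~\ref{lemma:num_properties} are chosen precisely so that this overlap is clean.
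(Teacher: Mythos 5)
The paper's own proof is only a one-line appeal to Lemmas~\ref{lemma:properties} and~\ref{lemma:num_properties}, so you are effectively reconstructing the implicit argument; your decomposition into same-line and cross-line cases, and your handling of the cross-line case (modulus bound of Lemma~\ref{lemma:num_properties}(2) for $\psi_2\in(0,\psi_0]$, then the function $h$ with its convexity and unique zero at $(1,1)$ for $\psi_2\in[\psi_0,1]$, with the $2\pi$ shift accounting for $\xi(\omega+i\omega')=\xi(i\omega')=2\pi/X$), track the intended use of these lemmas. Your choice of $\lambda_0$-monotonicity (Lemma~\ref{lemma:properties}(3)) for the same-line $\tilde n=0$ case is also the right call: because $\xi(i\beta)$ runs over $[2\pi/X,\infty)$ and hence wraps around modulo $2\pi/X$ infinitely often, $\xi$-injectivity alone would not rule out repeats on that branch, whereas the strict monotonicity of $\Im\lambda_0(i\beta)$ does.

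The genuine gap is in the same-line $\tilde n=1$ paragraph. You correctly note that Lemma~\ref{lemma:properties}(2) gives the derivative sign of $\Im\lambda_0(\omega+i\beta)$ only at $\beta=0$, but the patch you propose does not work: the uniform smallness of $|\lambda_0(\omega+i\psi\omega')|$ from Lemma~\ref{lemma:num_properties}(2) says nothing about injectivity of $\lambda_0$ on that line, and the strict convexity and nonvanishing of $h$ in Lemma~\ref{lemma:num_properties}(4)--(5) are statements comparing the $\tilde n=1$ branch to the $\tilde n=0$ branch, so they cannot exclude a self-intersection of the $\tilde n=1$ branch. The tool that actually settles this case is assertion~(6) of Lemma~\ref{lemma:properties}: $\partial_\beta\,\xi(\omega+i\beta)>0$ on all of $[0,\omega']$. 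Since $\xi$ increases monotonically from $\xi(\omega)=0$ to $\xi(\omega+i\omega')=2\pi/X$ on this branch (so there is no modular wraparound, unlike the $\tilde n=0$ branch), $\xi$ is injective there, and two distinct $\alpha$'s on the $\tilde n=1$ line cannot even share a Floquet parameter, let alone an eigenvalue. You do gesture at this in the final clause (\emph{``a separate interval-arithmetic monotonicity check\ldots of the kind used to prove assertion (6)''}), but you present it as a supplementary step on a sub-interval rather than as the whole argument. The paragraph should lead with Lemma~\ref{lemma:properties}(6), drop the appeal to $h$-convexity from the same-line discussion, and then the proof is sound.
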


\begin{proof}
The result of the theorem follows from the lemmata  \ref{lemma:properties} and \ref{lemma:num_properties} together with the fact that $\xi(\omega)=0$ and $\xi(\omega+i\omega')= \xi(i\omega')=2\pi/X$ shown in the proof of Lemma \ref{lemma:xi_zero}. 
\end{proof}

%---------------------------------------
% Distinct alpha
%---------------------------------------
\subsection{Distinctness of $\alpha_j$, (A2)}\label{distinct}

Condition $A_2$ of \cite{JNRZ} is that the $\{\alpha_j\}$, $j = 1,2,3$, be distinct where 
\begin{equation}
\lambda_{KdV,j}(\xi) = i \alpha_j(\xi)\xi = i\alpha_j^0 \xi + o(|\xi|^2),
\notag
\end{equation}
and the $\lambda_{KdV,j}(\xi)$ are the three critical modes of the KdV linearized (Bloch) operator about a periodic wave. In \cite{JNRZ} it is noted that the $\alpha_j$ are precisely the eigenvalues of the Whitham modulational equations or the characteristic velocities which can be expressed in terms of the Riemann invariants $w_1\leq w_2\leq w_3$, $k^2 = \frac{w_2-w_1}{w_3-w_1}$ as 
\begin{equation}
\begin{split}
V_i(w_1,w_2,w_3) &= \frac{w_1+w_2+w_3}{3}+\frac{2(w_3-w_2)}{3}b_i,\\
b_1= \frac{k^2K(k)}{E(k)-K(k)},\quad b_2 &= \frac{k^2(1-k^2)K(k)}{(1-k^2)K(k)-E(k)},\quad b_3 = \frac{(1-k^2)K(k)}{E(k)}.
\end{split}\label{eq:distinct}
\end{equation}

Since $k=0.99$, we have from $w_1\leq w_2\leq w_3$ and $k^2 = \frac{w_2-w_1}{w_3-w_1}$ that  $w_3 > w_1$ and hence the $V_i$ are distinct iff the $b_i$ are distinct. Using interval arithmetic, we find for $k = 0.99$ that
$
b_1 \in [  -1.41307042217012,  -1.41307042217007], \
b_2\in [  -0.06807580473435,  -0.06807580473434],$ and $
b_3 \in [   0.06494693392736,   0.06494693392737],
$
so that the $\alpha_j$ are distinct.

\subsection{Stability condition (S1)}\label{showS1}

In this section we show that the stability condition (S1), given by equation \eqref{lambda1}, holds for $k = 0.99$. We recall the definition of $\Re(\lambda(\xi))$, $f(\alpha)$, and $g(\alpha)$ given in equation \eqref{innerprod}. In Section \ref{gspecial}, we prove that $g(\alpha) < 0 $ for $\alpha = i\beta$, $\beta\in[0,\omega']$. In Section \ref{zeros}, we show that $f(\alpha) = g(\alpha)=0$ for $\alpha \in \{\omega,\omega+i\omega',i\omega'\}$. We are not able to evaluate $f(\alpha)$ and $g(\alpha)$ explicitly for other values of $\alpha$, so we use interval arithmetic for these values. In Section \ref{chebyshev} we describe the analytic interpolation we use to compute the stability condition (S1). Finally, in Section \ref{formulation} we reformulate the stability condition $\Re(\lambda_1(\xi))$ for convenience and provide computational details. 

%---------------------------------------
% Case $\alpha = i\beta$
%---------------------------------------

\subsubsection{Case $\alpha = i\beta$}\label{gspecial}

In the case that $\alpha = i\beta$, it can be shown that the denominator of \eqref{innerprod}, $g(\alpha) = \int_0^X \bar v(x) v'(x)dx$, is positive for all $k\in(0,1)$ and $\beta \in (0,\omega')$.

%
%%  lemma:  $\int_0^X v(x)\bar v'(x)dx < 0$
% 

\bl
The integral $\int_0^X \bar v(x) v'(x)dx > 0$ for all $k\in (0,1)$ for $\alpha = i\beta$ with $\beta \in (0,\omega')$. 
\el
%proof
\begin{proof}
Define
\begin{equation}\label{wdef}
w(x):= \sigma^2(x+i\omega'+\alpha)\sigma^{-2}(x+i\omega'),\quad c(\alpha):=e^{-2i\omega'\zeta(\alpha)}\sigma^{-2}(\alpha).
\end{equation}
Then $v(x) = c(\alpha) w(x) e^{-2x\zeta(\alpha)}$. Now 
$
w'(x) = 2w(x)(\zeta(x+i\omega'+\alpha)-\zeta(x+i\omega'))
$
 and so we have
\begin{equation}
\begin{split}
v'(x)&= 2c(\alpha)\left[\zeta(x+i\omega'+\alpha)-\zeta(x+i\omega')-\zeta(\alpha)\right]w(x)e^{-2x\zeta(\alpha)},\ \mathrm{and} \\
\overline{v(x)}v'(x)&=  2\left|c(\alpha)w(x)e^{-2x\zeta(\alpha)}\right|^2\left(\zeta(x+i\omega'+\alpha)-\zeta(x+i\omega')-\zeta(\alpha)\right).
\end{split}
\end{equation}
By a well-known property \eqref{prop:addition},
$
\zeta(x+i\omega'+\alpha)-\zeta(x+i\omega')-\zeta(\alpha)= \frac{1}{2}\frac{\wp'(x+i\omega')-\wp'(\alpha)}{\wp(x+i\omega')-\wp(\alpha)}.
$
From the q-series representation of $\wp$ \eqref{qseries}, we see that $\wp(x+i\omega')-\wp(i\beta)\in \R$, and $\wp'(x+i\omega)\in \R$, and $\Re(\wp'(i\beta)) = 0$. Thus, 
\begin{equation}\label{sign1}
\textrm{sign}(\Im[\zeta(x+i\omega'+\alpha)-\zeta(x+i\omega')-\zeta(\alpha)])= -\textrm{sign}(\Im(p'(i\beta)))\textrm{sign}(\wp(x+i\omega')-\wp(\alpha)).
\end{equation}
Now 
\begin{equation}
p'(i\beta)= -\frac{i\pi^3}{\omega^3}\left( \frac{e^{\pi \beta/2\omega}+e^{-\pi\beta/2\omega}}{(e^{\pi \beta/2\omega}-e^{-\pi\beta/2\omega})^3} - 2\sum_{k=1}^{\infty} \frac{k^2q^{2k}}{1-q^{2k}}(e^{\pi\beta/2\omega}-e^{-\pi\beta/2\omega})\right),
\end{equation}
which, upon inspection, yields $\Im(\wp'(i\beta))< \wp'(i\omega')=0$ for $\beta \in (0,\omega')$. Note that 
$
\frac{\partial}{\partial \beta} (\wp(x+i\omega')-\wp(i\beta))= -i\wp'(i\beta)
= \Im(\wp'(i\beta))
< \wp'(i\omega')
=0,
$
so that, employing \eqref{qseries}, we have
$
\wp(x+i\omega')-\wp(i\beta)\geq \wp(x+i\omega')-\wp(i\omega')
= \frac{2\pi^2}{\omega^2}\sum_{k=1}^{\infty} \frac{kq^k}{1-q^{2k}}\left(1-\cos\left(\frac{k\pi x}{\omega}\right)\right)
\geq 0.
$
Then from \eqref{sign1} we have that $\Im[\zeta(x+i\omega'+\alpha)-\zeta(x+i\omega')-\zeta(\alpha)]>0$ so that $\Im(\bar v(x)v'(x))\geq 0$,  hence $\Im(\int_0^X\bar v(x)v'(x))dx>0$.

\end{proof}

\subsubsection{Zeros of $f(\alpha)$ and $g(\alpha)$}\label{zeros} The denominator and numerator of \eqref{innerprod} is zero when $\alpha = \omega,\ i\omega',$ or $ \omega+i\omega'$, which cancellation can not be resolved with interval arithmetic. Here we show the existence of these zeros analytically. 

\begin{lemma} \label{lemma:zeros}
The functions $f(\alpha)$ and $g(\alpha)$ defined in equation \eqref{innerprod} have zeros at $\alpha = \omega,\ i\omega'$, and $\omega+i\omega'$.
\end{lemma}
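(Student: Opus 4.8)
The plan is to exhibit the zeros of $f$ and $g$ at $\alpha=\omega$, $\alpha=i\omega'$, and $\alpha=\omega+i\omega'$ by examining the eigenfunction $v(x)$ in \eqref{vdef} at these three distinguished values of $\alpha$. The key observation is that each of $\omega$, $i\omega'$, $\omega+i\omega'$ is a half-period, so that the Floquet parameter $\xi(\alpha)$ takes one of its two special values ($0$ or $2\pi/X$), as was already established in the proof of Lemma \ref{lemma:xi_zero}. At such $\alpha$, the associated eigenfunction should degenerate: either $v$ itself becomes (proportional to) a genuine $X$-periodic function with $v'$ having mean zero, or $v$ collapses in a way that forces both inner products $f(\alpha)=-\langle v''+v'''',\bar v'\rangle$ and $g(\alpha)=\langle v,\bar v'\rangle$ to vanish. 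Concretely, I would compute $v(x+2\omega)/v(x)$ and $v(x+2i\omega')/v(x)$ from the quasi-periodicity relations $\sigma(z+2\omega_j)=-e^{2\eta_j(z+\omega_j)}\sigma(z)$ for the Weierstrass sigma function, and check that at $\alpha\in\{\omega,i\omega',\omega+i\omega'\}$ the exponential factor $e^{-2(x+i\omega')\zeta(\alpha)}$ in \eqref{vdef} exactly compensates the quasi-periodicity of the sigma-quotient, rendering $v$ genuinely periodic (up to the sign $e^{i\xi X}=\pm 1$).

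The main step is then the following: at each of these three half-periods, $v$ extends to an $X$-antiperiodic or $X$-periodic smooth function, and moreover $\wp'(\alpha)=0$ there (these are exactly the zeros of $\wp'$ in the fundamental domain, as noted in Lemma \ref{lemma:xi_zero}), so that $\lambda_{KdV}(\xi(\alpha))=-4\wp'(\alpha)=0$. Since $v$ is the antiderivative of an eigenfunction of $\mathcal{L}_\xi$ with eigenvalue $\lambda_{KdV}=0$, and the kernel of $\mathcal{L}_\xi$ at these points is spanned by derivatives of the profile (the translational/Galilean modes), one can identify $v$ — or $v'$ — explicitly. In fact the cleanest route is to show $v'$ is, up to a constant, the derivative $\bar u_0'$ of the KdV profile (or a closely related exact elliptic function) which is $X$-periodic with zero mean; then $g(\alpha)=\int_0^X v\,\bar v'\,dx = -\tfrac12\int_0^X (\,\overline{|v|^2}\,)'\cdot(\text{something})$ integrates to a boundary term that vanishes by periodicity, and similarly $f(\alpha)=\int_0^X(v''+v'''')\bar v'\,dx$ becomes an integral of a total derivative of an $X$-periodic function, hence zero. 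Alternatively, and more robustly, one shows that at these $\alpha$ the numerator and denominator of the Rayleigh-type quotient \eqref{lambda1} are individually integrals over a full period of exact derivatives of periodic functions.

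The key steps in order: (1) write $v$ via $w(x)$ and $c(\alpha)$ as in \eqref{wdef}, so $v(x)=c(\alpha)w(x)e^{-2x\zeta(\alpha)}$; (2) using $\sigma(z+2\omega)=-e^{2\zeta(\omega)(z+\omega)}\sigma(z)$ and the analogous relation for $2i\omega'$, compute $w(x+2\omega)$ and $w(x+2i\omega')$ and verify that for $\alpha=\omega$ the factor $e^{-2x\zeta(\alpha)}$ makes $v(x+2\omega)=e^{i\xi(\omega)X}v(x)=v(x)$, and similarly handle $\alpha=i\omega'$ and $\alpha=\omega+i\omega'$ with $e^{i\xi X}=e^{i\cdot 2\pi}=1$; (3) conclude $v$, $v'$, $v''$, $v''''$ are all $X$-periodic; (4) observe that $v'\bar v$, $(v''+v'''')\bar v'$ are then products of $X$-periodic functions that are themselves derivatives — specifically $v\bar v'=\tfrac12(|v|^2)'$ when $v$ takes the special form making $|v|$ periodic, and $(v''+v'''')\bar v' $ is a derivative of a periodic quantity after integration by parts using periodicity — so both integrals over $[0,X]$ vanish. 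The main obstacle I anticipate is step (2)/(4): one must be careful that $v$ is not merely quasi-periodic but genuinely periodic at these points (the exponential and sigma quasi-periodicities must cancel exactly, which uses the specific value of $\zeta(\alpha)$ at a half-period and the Legendre relation $\zeta(\omega)i\omega'-\zeta(i\omega')\omega = \pi i/2$), and then that the resulting period integral of the numerator really is zero — this likely requires writing $f(\alpha)$ as $\langle (v+v'')', v'\rangle$-type expressions and integrating by parts to land on $\int_0^X \partial_x(\text{periodic})\,dx=0$, rather than a naive cancellation. Handling the case $\alpha=\omega+i\omega'$ simultaneously with $\alpha=i\omega'$ via the relation $\xi(\omega+i\omega')=\xi(i\omega')=2\pi/X$ from Lemma \ref{lemma:xi_zero} should keep the bookkeeping manageable.
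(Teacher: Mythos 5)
The paper's proof is a direct computation: using the quasi-periodicity of $\sigma$ together with the addition formula $\sigma(z_1+z_2)\sigma(z_1-z_2)=-\sigma^2(z_1)\sigma^2(z_2)\bigl(\wp(z_1)-\wp(z_2)\bigr)$, it shows that for $\alpha\in\{\omega,\,i\omega',\,\omega+i\omega'\}$ the eigenfunction collapses, up to a nonzero constant, to the \emph{real}, even, $X$-periodic function $v(x)=\wp(x+i\omega')-\wp(\alpha)$. Then $v'=\wp'(x+i\omega')$ is real, the conjugation bar disappears, and both integrands in $f$ and $g$ become polynomials in $\wp(x+i\omega')$ multiplied by $\wp'(x+i\omega')$; each such integrand is an exact derivative of a polynomial in $\wp(x+i\omega')$, so the integrals over a full period vanish.

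Your proposal correctly locates the mechanism at a high level — at each of the three half-periods $v$ becomes genuinely periodic rather than merely Bloch-periodic, and the inner products should then integrate to boundary terms. But there is a concrete gap in the reasoning. Periodicity of a \emph{complex-valued} $v$ by itself only gives $\Re\,g(\alpha)=0$, because $v\bar v'+\bar v v'=(|v|^2)'$ integrates to zero but the imaginary part $\tfrac{1}{2i}(v\bar v'-\bar v v')$ does not reduce to a total derivative in general; the same issue afflicts $f(\alpha)=-\int_0^X(v''+v'''')\bar v'\,dx$. What makes the argument close is that $v$ is actually real-valued (up to an irrelevant nonzero constant), and you never establish this. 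That reality is precisely what drops out of the sigma addition formula, which turns the ratio $\sigma^2(x+i\omega'+\alpha)/\sigma^2(x+i\omega')$ times the exponential into $\wp(x+i\omega')-\wp(\alpha)$. Your alternative route via identifying $v'$ with a kernel element of $\mathcal{L}_\xi$ at $\lambda_{\mathrm{KdV}}=0$ could in principle land at the same conclusion — the kernel is indeed spanned by $\wp'(x+i\omega')$ and friends — but the kernel at $\xi=0$ is three-dimensional, so you would have to pin down exactly which mode $v'$ is, which is more work than the direct computation. The fix is to replace the Legendre-relation/quasi-periodicity verification of periodicity with the sigma addition formula, which simultaneously establishes periodicity \emph{and} reality and delivers the explicit elliptic-function form that makes both integrals manifestly zero.
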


\begin{proof}
We use the quasi-periodicity and addition properties of $\sigma(\cdot)$ given in \eqref{prop:periodicity} and \eqref{prop:addition} to simplify $v(x)$ given by equation \eqref{vdef}. We obtain, fixing $(\tilde n,\psi)\in \{(1,0),(0,1),(1,1)\}$,
\begin{equation}
\begin{split}
v(x)&= \sigma^2(x+i\omega'+\tilde n\omega+i\psi \omega')\sigma^{-2}(x+i\omega')\sigma^{-2}(\tilde n\omega +i\psi\omega')e^{-2(x+i\omega')\zeta(\tilde n\omega + i\psi \omega')}\\
&= (-1)^{\tilde n + \psi}\frac{ \sigma(x+i\omega' +\tilde n \omega + i\psi \omega')\sigma(x+i\omega'-\tilde n \omega -i\psi \omega')}{\sigma^{2}(x+i\omega')\sigma^{2}(\tilde n \omega + i\psi \omega')}e^{-2(x+i\omega')(\tilde n \zeta(\omega)+\psi \zeta(i\omega')+ \zeta(\tilde n \omega + i\psi \omega'))}\\
&= (-1)^{\tilde n + \psi + 1}(p(x+i\omega')-p(\tilde n \omega + i\psi \omega')).
\end{split}
\end{equation}
Note that in the case $\tilde n = \psi = 1$, $\tilde n \zeta(\omega)+\psi \zeta(i\omega')+ \zeta(\tilde n \omega + i\psi \omega')) =0$ since $\zeta(\tilde n \omega + i\psi \omega') = -\eta_2$ and $\eta_1+\eta_2+\eta_3 = 0$. Since \eqref{vdef} is invariant by multiplication of $v(\cdot)$ by a non-zero constant, we may take $v(x) = p(x+i\omega')-p(\tilde n \omega + i\psi \omega')$.
The derivatives of $v(x)$ are given by
$
v'(x) = \wp'(x+i\omega')$, 
$v''(x)= \wp^2(x+i\omega')-\frac{g_2}{2}$, and
$v'''(x) = 12 \wp(x+i\omega')\wp'(x+i\omega')$,
where $g_2$ and $g_3$ are the Weierstrass elliptic function invariants. The mirror symmetry and periodicity properties of the derivative of the Weierstrass elliptic function lead us to conclude $\bar v'(x) = v'(x)$ for $\alpha = i\omega'$. Recalling that $X = 2\omega$, we see that
\begin{equation}
g(\tilde n\omega + i\psi \omega') = \int_0^{2\omega} (\wp(x+i\omega')-\wp(\tilde n + i\psi\omega'))\wp'(x+i\omega')dx
= \left. \frac{1}{2}\wp^2(x+i\omega')-\wp(\tilde n + i\psi\omega')\wp(x+i\omega')\right|_0^{2\omega}
= 0
\label{feval}
\end{equation}
since $\wp$ is $2\omega$ periodic. Similarly,
\begin{equation}
\begin{split}
f(\tilde n \omega + i\psi \omega') 
&= \int_0^{2\omega}v''(x)v'(x)-v'''(x)v''(x)dx\\
&= \int_0^{2\omega} (6\wp^2(x+i\omega')-g_2/2)(1-12\wp(x+i\omega'))\wp'(x+i\omega')dx
= 0.
\end{split}
\label{geval}
\end{equation}
\end{proof}

%
%% Numerical conditioning
%

\subsubsection{Problem formulation}\label{formulation}

Evaluating \ref{innerprod} is poorly conditioned numerically when $\alpha \approx 0$ since $\sigma(0) = 0$ and $\zeta(0) = \infty$. Note however that \eqref{innerprod} is invariant under multiplication of \eqref{vdef} by a nonzero constant, $c(\alpha)$. Letting $c(\alpha)= e^{-2\omega'\zeta(\alpha)i}/\sigma(\alpha)^2$ yields
$
v(x):= \frac{\sigma^2(x+i\omega' + \alpha)}{\sigma^2(x+i\omega')}e^{ -2\zeta(\alpha)},
$
which by \eqref{w_of_theta} may be written as
\begin{equation}
v(x)= \vt_1^2(\pi z_1/2\omega)\vt_2^{-2}(\pi z_2/2\omega)e^{\eta_1z_1^2/\omega}e^{-\eta_1z_2^2/\omega}e^{\gamma x}
= c(\alpha) \vt_1^2(\pi z_1/2\omega)\vt_2^{-2}(\pi z_2/2\omega)e^{i\xi x},
\label{newvdef}
\end{equation}
where $\vartheta_1(\cdot)$ is the Jacobi Theta function, $z_1 = x+i\omega' + \alpha$, $z_2 = x+i\omega'$, and $\xi\in \R$ is given by \eqref{some_defs}. Once again, \eqref{innerprod} permits we drop the constant $c(\alpha)$, so we may take
$
v(x)= w(x)e^{\gamma x},
$
where
$
w(x):= \vt_1^2(\pi z_1/2\omega)\vt_2^{-2}(\pi z_2/2\omega),
$
and $\gamma = i\xi$.
There is still a singularity since $|\xi(\alpha)| \to \infty$ as $\alpha \to 0$, see \eqref{some_defs}.  However, the singularity is no longer coupled to the spatial variable $x$ making it possible to factor out the singular parts from the integrals given in \eqref{innerprod2}.  
We make a few simple changes that make it possible to compute the Jacobi Theta function series with fewer terms resulting in smaller interval error bounds. We begin by recalling that $v(x)$ is $X = 2\omega$ periodic allowing us to center the integrals about $x = 0$. We also make a change of variables $x\to \omega x$, and reduce the number of derivatives of $v(x)$ we must compute via integration by parts yielding,
%
% final definition of $\lambda_1$
%
\begin{equation}
\lambda_1 
= \frac{\int_{-1}^{1} \left[\frac{\partial}{\partial x}v(\omega x)+\frac{1}{\omega^2}\frac{\partial^3}{\partial x^3}v(\omega x)\right]\frac{\partial^2}{\partial x^2}\bar v(\omega x)dx}{\omega^2\int_{-1}^{1} v(\omega x)\frac{\partial}{\partial x}\bar v(\omega x)dx}=:\frac{\tilde f(\alpha)}{\tilde g(\alpha)}=:\frac{\tilde f_1(\alpha)+\tilde f_2(\alpha)/\omega^2}{\tilde g(\alpha)}.
\label{innerprod2}
\end{equation}

Now
\begin{equation}
\begin{split}
v(x) = w(x)e^{\gamma x},\quad
v'(x)& = w'(x) e^{\gamma x} + \gamma w(x)e^{\gamma x},\quad
v''(x)  = w''(x) e^{\gamma x} + 2\gamma w'(x) e^{\gamma x} + \gamma^2 w(x) e^{\gamma x},\\
v'''(x) &= w'''(x) e^{\gamma x} +3\gamma w''(x) e^{\gamma x} + 3\gamma^2 w'(x) e^{\gamma x} + \gamma^3 w(x) e^{\gamma x}.
\end{split}
\label{vandder}
\end{equation}
Hence
\begin{equation}
\begin{split}	
v(\omega x) \bar v'(\omega x) &= w(\omega x) (\bar w'(\omega x) + (i\xi) \bar w(\omega x)),\quad
v(\omega x) \bar v''(\omega x) = \sum_{n=0}^{3} \gamma^n c_n(x),\\
v'''(\omega x) \bar v''(\omega x)& = \sum_{n=0}^{5} \gamma^n h_n(x).
\end{split}
\label{factored}
\end{equation}

%where 
%
%
%
%\eq{
%c_0(x)&:= w'(\omega x) \bar w''(\omega x)\\
%c_1(x)&:= w(\omega x) \bar w''(\omega x)-2w'(\omega x)\bar w'(\omega x)\\
%c_2(x)&:= w'(\omega x)\bar w(\omega x) -2w(\omega x)\bar w'(\omega x)\\
%c_3(x)&:= w(\omega x)\bar w(\omega x)\\
%h_0(x)&:= w'''(\omega x)\bar w''(\omega x)\\
%h_1(x)&:= 3w''(\omega x)\bar w''(\omega x)-2w'''(\omega x)\bar w'(\omega x)\\
%h_2(x)&:= w'''(\omega x)\bar w(\omega x)-6w''(\omega x)\bar w'(\omega x)+3 w'(\omega x)\bar w''(\omega x)\\
%h_3(x)&:= 3w''(\omega x)\bar w(\omega x)-6w'(\omega x)\bar w'(\omega x)+w(\omega x)\bar w''(\omega x)\\
%h_4(x)&:= 3w'(\omega x)\bar w(\omega x)-2w(\omega x)\bar w'(\omega x)\\
%h_5(x)&:= w(\omega x)\bar w(\omega x).
%}{}

%When $\alpha = i\beta$, there is a singularity of $\xi(\alpha) = \xi (i\beta)$ at $\beta = 0$. However, by using the formulation of $v(x)$ given in \eqref{newvdef}, we arrive at the factored forms given in \eqref{factored} which allows us to numerically evaluate the integrals in \eqref{innerprod2}. As $\beta \to 0$, $w(x)$ approaches the constant $1$ everywhere and so we find $\lambda_1<0$ for $\beta$ sufficiently small. 

The advantage of the formulation \eqref{factored} is that $c_n(z)$ and $h_n(z)$ are analytic in $z$ and so we may easily compute $\int_{-1}^1 c_n(x)dx$ and $\int_{-1}^1 h_n(x)dx$ with interval arithmetic as called for in \eqref{innerprod2}. This allows us to compute $f(\alpha)$ and $g(\alpha)$ in the limit $\alpha\to 0$, where $\gamma(\alpha) = i\xi(\alpha) \to \infty$.

Because of the mirror symmetry of $\vartheta_1(x)$, the conjugate of $w(x)$ and its derivatives are analytic functions in the variables $x$ and $\beta$. Then the integrands in the numerator and denominator of \eqref{innerprod2} are analytic in both the variables $x$ and $\beta$ on an open, connected set not containing zeros of $\vt_1(\pi(x\pm i\omega')/2)$ or the poles of the Weierstrass Zeta function. Hence, we may use analytic interpolation.

For convenience we will set $\beta = \psi \omega'$ and parametrize by $\psi\in[0,1]$ instead of $\beta$.

% ------------------------------------------------------------------------------------------------------------
% Chebyshev interpolation
% ------------------------------------------------------------------------------------------------------------

Interpolation with our bootstrapping method works very well, but evaluating the interpolating polynomial with interval arithmetic still requires some care. Suppose we are interested in verifying $f(x):[-1,1]\to \R$ has no zeros on an interval $[a,b]\subset [-1,1]$. Let $p_N(x)$ be the Chebyshev interpolant of $f(x)$ on $[-1,1]$ with N nodes with interpolation error $\eps_N$. We seek to verify that, without loss of generality, $p(x) > \eps_x$ for $x\in[a,b]$ which implies that $f(x) > 0$ for $x\in [a,b]$. Making the transformation, $x= \cos(\theta)$, it is equivalent to verify $p(\theta) > \eps_N$ for $\theta \in [\cos^{-1}(b),\cos^{-1}(a)]\subset [0,\pi]$. Note that $p(\theta) = \sum_{n=0}^N c_n \cos(n\theta)$. Now we Taylor expand $p(\theta) = \sum_{n=0}^5 p^{(n)}(\theta_0)(\theta-\theta_0)^n + p^{(6)}(\tilde \theta)(\theta-\theta_0)^6$ for some $\tilde \theta \in [\theta_0,\theta]$, as described in Section \ref{evaluate}.

% TODO: Give some data showing how much better this type of interpoaltion does.

\subsubsection{Interpolation results}

In this section we provide details of our numerical verification of the stability condition (S1),
that is we show that $\lambda_1(\xi)<0$, where $\lambda_1$ is as described in equation \eqref{innerprod2}. In all of our computations, we use Version 6 (30 March 2010) of the MATLAB based interval arithmetic package INTLAB \cite{R}. We evaluate the Jacobi Theta function $\vartheta_1(\cdot)$ by using its q-series representation given in equation \eqref{qseries}. We also evaluate $\xi(\alpha)$ by using the q-series representation of the Weierstrass zeta function and simplifying terms where possible. Explicit error bounds and Matlab code are provided in \cite{Bdoc}. 

Our general strategy is to prove numerically using interval arithmetic that the functions $\tilde f(\alpha)$ and $\tilde g(\alpha)$ defined in \eqref{innerprod2}	characteristically are as depicted in Figure \ref{fig96}, hence $\lambda_1(\xi) < 0$. The following lemma will aid us in our proof by allowing us to compute $1/\omega(k)^2$, used in evaluating $\tilde f$, at the left and right endpoints of an interval in $k$ in order to obtain a tighter interval enclosure of $1/\omega^2$.

\begin{lemma}\label{lemma:kappa-monotone}
For $k\in [0.9,9999999]$, $\pd{}{k}\kappa^2(k) \leq -2.248783289537847.$
\end{lemma}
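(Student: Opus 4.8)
The plan is to reduce $\partial_k\kappa^2$ to an explicit closed form in $k$, $K(k)$, and $E(k)$, and then certify the inequality by rigorous interval arithmetic on a partition of $[0.9,0.9999999]$, exactly in the spirit of the rest of the paper. Write $\kappa^2(k)=\frac{7\pi^2}{20}\cdot\frac{N(k)}{K(k)^2 D(k)}$, where $N(k)=2(k^4-k^2+1)E(k)-(1-k^2)(2-k^2)K(k)$ and $D(k)=(-2+3k^2+3k^4-2k^6)E(k)+(k^6+k^4-4k^2+2)K(k)$, using the definition \eqref{def:kappa}. Differentiating with the classical identities $K'(k)=\frac{E(k)-(1-k^2)K(k)}{k(1-k^2)}$ and $E'(k)=\frac{E(k)-K(k)}{k}$, and noting that $N$, $D$, $N'$, $D'$, $K'$ are all rational in $k$, $K$, $E$, the logarithmic-derivative form $\partial_k\kappa^2=\kappa^2\bigl(\tfrac{N'}{N}-\tfrac{2K'}{K}-\tfrac{D'}{D}\bigr)$ exhibits $\partial_k\kappa^2$ as an explicit function $\Phi(k)=R(k,K(k),E(k))$ whose only possible singularities in $(0,1)$ are at zeros of $K$, $N$, or $D$. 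Since $\kappa=\mathcal{G}(k)$ is a smooth positive selection curve on $[0.9,0.9999999]$, none of these occur there, and this is verified in passing by the positivity of the interval enclosures of $K$, $N$, $D$ during the numerical step.

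Concretely, I would implement $\Phi$ in INTLAB using the q-series (or AGM) enclosures of $K$ and $E$ already employed elsewhere, and check $\Phi(k)\le -2.248783289537847$ on a subdivision of $[0.9,0.9999999]$. Near the right endpoint the factor $\tfrac{1}{1-k^2}$ appearing in $K'$ is large but bounded (about $5\times 10^{6}$ at $k=0.9999999$, while $K\approx 9.1$), and there $\Phi\to-\infty$, so the inequality is comfortably slack and a coarse partition suffices; the delicate region is near $k=0.9$, where $-\Phi$ is closest to the asserted constant, so I would refine the partition there until the enclosure of $\Phi$ on each piece lies strictly below $-2.248783289537847$. To keep the enclosures from being over-wide where $\tfrac{1}{1-k^2}$ or $\tfrac1k$ factors are ill-conditioned, I would (as in Section~\ref{evaluate}) evaluate $\Phi$ on each subinterval by expanding about its midpoint rather than crudely substituting the whole interval, and I would divide out the explicit $\tfrac1k$ and $\tfrac1{1-k^2}$ factors symbolically before interval evaluation wherever possible; in particular the factorization $k^6+k^4-4k^2+2=-(1-k^2)(k^4+2k^2-2)$ makes the apparent $0\cdot\infty$ behaviour of the $(k^6+k^4-4k^2+2)K(k)$ term as $k\to 1$ manifest and removable.

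The obstacle here is quantitative rather than conceptual: producing an interval enclosure of $\Phi$ that is tight enough near $k=0.9$ to clear the stated constant to the claimed number of digits, given that $\Phi$ is assembled from cancellation-prone combinations of $K$ and $E$ and from factors that degenerate as $k\to 1$. This is exactly the situation that adaptive subdivision together with midpoint Taylor expansion of the interval evaluation is designed to handle. Once $\Phi(k)\le -2.248783289537847$ has been verified on every subinterval of the partition, the lemma follows immediately.
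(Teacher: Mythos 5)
Your approach coincides with the paper's: solve the selection principle \eqref{def:kappa} for $\kappa^2(k)$, differentiate symbolically using the standard identities for $K'(k)$ and $E'(k)$, and certify the bound on $[0.9,0.9999999]$ by interval arithmetic on a subdivision. The paper's proof is terse and defers all implementation detail to the companion documentation, whereas you spell out the logarithmic-derivative reformulation, the factorization removing the apparent $0\cdot\infty$ near $k=1$, and the adaptive-subdivision/midpoint-expansion tactics, but these are refinements of the same computation rather than a different argument.
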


\begin{proof}
We solve for $\kappa^2(k)$ in \eqref{def:kappa}, compute its derivative, and evaluate the derivative with interval arithmetic to determine that $\pd{}{k}\kappa^2(k) \leq -2.248783289537847$ for $k \in[ 0.9,0.9999999]$. See section kappa\_lemma in \cite{Bdoc} for details.
\end{proof}

\begin{corollary}
For $k\in [0.9,9999999]$, $\frac{1}{\omega^2(k)} = \frac{\kappa(k)^2}{\pi^2}$ is monotone decreasing.
\end{corollary}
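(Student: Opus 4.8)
The plan is to derive the corollary directly from Lemma \ref{lemma:kappa-monotone} together with the explicit identity $\omega = \pi/\kappa$ recorded in \eqref{some_defs}, so that $1/\omega^2 = \kappa^2/\pi^2$. Since $\pi^2$ is a positive constant, the map $\kappa^2 \mapsto \kappa^2/\pi^2$ is strictly increasing; hence monotonicity of $k \mapsto 1/\omega^2(k)$ is equivalent to monotonicity of $k \mapsto \kappa^2(k)$, and the \emph{direction} (decreasing) is inherited directly from the sign of the derivative.

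Concretely, first I would note that $\kappa(k) = \mathcal{G}(k)$ is defined by \eqref{def:kappa} and, by Proposition \ref{p:kdvsolnexpand} and the analyticity of the complete elliptic integrals $K(k)$, $E(k)$ on $[0,1)$, the right-hand side of \eqref{def:kappa} is a smooth (indeed real-analytic) function of $k$ on any compact subinterval of $(0,1)$ on which the denominator does not vanish; in particular $\kappa^2(k)$ is differentiable on $[0.9, 0.9999999]$. Second, I would invoke Lemma \ref{lemma:kappa-monotone}, which asserts $\partial_k \kappa^2(k) \leq -2.248783289537847 < 0$ on that interval. Third, applying the chain rule to $\tfrac{1}{\omega^2(k)} = \tfrac{\kappa^2(k)}{\pi^2}$ gives
\begin{equation}
\frac{d}{dk}\left(\frac{1}{\omega^2(k)}\right) = \frac{1}{\pi^2}\,\partial_k \kappa^2(k) \leq \frac{-2.248783289537847}{\pi^2} < 0,
\notag
\end{equation}
so $1/\omega^2$ is strictly decreasing on $[0.9, 0.9999999]$, which is exactly the claim. (I would also remark that the interval $[0.9, 9999999]$ in the statement is evidently a typo for $[0.9, 0.9999999]$, matching the hypothesis of the lemma.)

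There is essentially no obstacle here: the corollary is a one-line consequence of the preceding lemma, and the only thing to be careful about is confirming that the constant $\pi^2$ carries the correct sign and does not flip the inequality — which it plainly does not. The genuinely substantive work — namely the interval-arithmetic evaluation establishing the uniform negative upper bound on $\partial_k \kappa^2$ over the whole interval — has already been carried out in the proof of Lemma \ref{lemma:kappa-monotone}, with details deferred to \cite{Bdoc}. Thus the proof of the corollary is purely the observation that a positive rescaling preserves strict monotonicity, combined with the explicit relation between $\omega$ and $\kappa$.
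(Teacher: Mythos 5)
Your argument is correct and is exactly the intended one: the paper leaves the corollary unproven because it is an immediate consequence of Lemma \ref{lemma:kappa-monotone} together with $\omega=\pi/\kappa$ from \eqref{some_defs}, and you have simply written out that one-line deduction (including correctly spotting the typo $[0.9,9999999]$ for $[0.9,0.9999999]$). No gaps.
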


Now we present the main lemma of this section.

\begin{lemma}\label{p:stab}
For $k = 0.99$, $\lambda_1(\xi(\alpha)) < 0$ for $\alpha = \tilde n +i\psi \omega'$ where $\tilde n \in \{0,1\}$ and $\psi \in [0,1]$. 
\end{lemma}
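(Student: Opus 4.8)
# Proof Proposal for Lemma~\ref{p:stab}

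The plan is to verify numerically, using interval arithmetic, that $\Re\lambda_1(\xi(\alpha)) = \tilde f(\alpha)/\tilde g(\alpha) < 0$ for all $\alpha = \tilde n\omega + i\psi\omega'$ with $\tilde n \in \{0,1\}$ and $\psi \in [0,1]$, splitting into the two cases $\tilde n = 0$ and $\tilde n = 1$ and handling the removable singularities at the special points $\{\omega, i\omega', \omega+i\omega'\}$ separately. First, for the case $\tilde n = 0$ (i.e.\ $\alpha = i\psi\omega'$), recall from the Lemma in Section~\ref{gspecial} that the denominator satisfies $\tilde g(i\psi\omega') > 0$ for $\psi \in (0,\omega')$ (after the relevant normalization and up to the sign convention fixed by \eqref{innerprod2}), so it suffices to show the numerator $\tilde f$ has the opposite strict sign on the open interval, and to treat the endpoint $\psi = 1$ (where $\alpha = i\omega'$) via the explicit vanishing $\tilde f(i\omega') = \tilde g(i\omega') = 0$ established in Lemma~\ref{lemma:zeros}. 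Near that endpoint I would use the analyticity of the integrands in $x$ and $\psi$ (noted in Section~\ref{formulation}) to factor out the common zero and verify that the resulting ratio extends continuously with the correct sign, rather than evaluating $\tilde f/\tilde g$ directly where interval arithmetic would give a $0/0$ blow-up.

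Second, the bulk of the work is the case $\tilde n = 1$, where no sign is known a priori for either $\tilde f$ or $\tilde g$, and the two removable singularities $\alpha = \omega$ (at $\psi = 0$) and $\alpha = \omega + i\omega'$ (at $\psi = 1$) must both be excised. Here I would follow the factored formulation \eqref{factored}: write $v(\omega x) = w(\omega x)e^{\gamma x}$ with $\gamma = i\xi$, expand the integrands as polynomials in $\gamma$ with analytic coefficients $c_n(x), h_n(x)$, and compute $\int_{-1}^1 c_n(x)\,dx$ and $\int_{-1}^1 h_n(x)\,dx$ by Chebyshev interpolation in $x$ with rigorous error bounds \eqref{interp-bound2}. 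This yields $\tilde f$ and $\tilde g$ as explicit analytic functions of $\psi$ alone (with the $1/\omega^2$ factor controlled via Lemma~\ref{lemma:kappa-monotone} and its Corollary), and I would then Chebyshev-interpolate each in $\psi$ on $[0,1]$, subdividing $[0,1]$ into subintervals as in Computational detail~3 so that the stadium $E_\rho$ around each subinterval stays away from the poles of the Weierstrass functions and zeros of $\vartheta_1$, keeping $\rho$ large and the node count small (the bootstrap balancing poor sup-norm bounds $M_\rho$ against fast convergence). On each subinterval I verify the sign of $\tilde f/\tilde g$ by evaluating the interpolants with the Taylor-expansion-in-$\theta$ trick of Section~\ref{evaluate} to keep interval widths small, checking $\tilde g \neq 0$ (hence of constant sign) and $\tilde f/\tilde g < 0$; for the two subintervals abutting $\psi = 0$ and $\psi = 1$ I instead verify that $\tilde f$ and $\tilde g$ each vanish to the same order with a nonvanishing ratio of leading coefficients, using the derivative-interpolation bounds \eqref{eq:derbounds}.

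The main obstacle I anticipate is the behavior near $\psi = 0$ in the $\tilde n = 1$ case, where $\gamma = i\xi(\alpha) \to \infty$ as $\alpha \to \omega$ (since $\alpha \to 0$ is not the issue here, but $\xi$ still has delicate behavior, and more to the point $\alpha = \omega$ is precisely a zero of both $\tilde f$ and $\tilde g$): the integrands are large, the sup-norm bounds $M_\rho$ on the relevant stadia are correspondingly poor, and one must simultaneously resolve a $0/0$ cancellation and a large-parameter regime. Controlling this requires carefully choosing the normalization $c(\alpha)$ so that the singular factor $e^{-2\zeta(\alpha)}$ is removed (as in \eqref{newvdef}), tracking how the $\gamma^n$ terms in \eqref{factored} combine, and possibly using a dedicated small subinterval near $\psi = 0$ with an analytic change of variables so that the interpolation is performed on a function that is bounded and whose sign near the endpoint is manifest. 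The analogous but milder issue at $\psi = 1$ (where $\tilde f(\omega+i\omega') = \tilde g(\omega+i\omega') = 0$ by Lemma~\ref{lemma:zeros}, but $\xi$ stays finite) should succumb to the same factor-out-the-common-zero strategy, using \eqref{eq:derbounds} to bound the first derivatives of the interpolants and certify that the ratio of leading Taylor coefficients has the required negative sign.
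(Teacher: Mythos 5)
Your overall toolkit matches the paper's: analytic interpolation in $x$ and $\psi$ with rigorous error bounds of the form \eqref{interp-bound2}, subdivision of the $\psi$-interval to keep $\rho$ large, the Taylor-expansion-in-$\theta$ evaluation trick of Section~\ref{evaluate}, special handling of the $0/0$ cancellations at $\alpha\in\{\omega, i\omega', \omega+i\omega'\}$ via Lemma~\ref{lemma:zeros}, and the factored form \eqref{factored} where $\xi$ blows up. However, you have misidentified \emph{where} $\xi(\alpha)$ blows up, and this inverts the difficulty between the two cases.

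The Weierstrass $\zeta$-function has poles only at full lattice points $2m\omega + 2ni\omega'$, not at half-periods. Hence $\xi(\alpha) = 2i\bigl(\zeta(\alpha) - \tfrac{\alpha}{\omega}\zeta(\omega)\bigr)$ blows up as $\alpha\to 0$, i.e.\ in the $\tilde n = 0$ case as $\psi \to 0^+$; whereas on the segment $\alpha = \omega + i\psi\omega'$, $\psi \in [0,1]$, the function $\xi$ is analytic and bounded, with $\xi(\omega) = 0$ and $\xi(\omega+i\omega') = 2\pi/X$ (as used in Lemma~\ref{lemma:xi_zero} and noted in Appendix~C). Your claim that ``$\gamma = i\xi(\alpha)\to\infty$ as $\alpha\to\omega$'' in the $\tilde n = 1$ case is therefore false, and treating the $\tilde n = 0$ case as though the only delicate point is $\psi = 1$ misses the actual singular limit. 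In the paper the cases receive opposite treatments from yours: $\tilde n = 1$ is handled by direct two-dimensional interpolation of $\tilde f$ and $\tilde g$ in $x$ and $\psi$, combined with sign checks on $\partial_\psi\tilde f$ and $\partial_\psi\tilde g$ on small endpoint subintervals $[0,a]$ and $[b,1]$ to resolve the removable zeros at $\psi = 0,1$; while $\tilde n = 0$ requires the factored form $\tilde f(\alpha) = \sum_{k=0}^{5} p_k(\psi)(i\xi)^k$ together with a polynomial root bound (the estimate $R = 1 + \max|a_j|/|a_n|$) to rule out zeros of $\tilde f$ on $\psi\in[0,10^{-3}]$, precisely because $\tilde f(i\psi\omega')\to\infty$ there and cannot be interpolated directly.

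Concretely, the gap in your proposal is this: in the $\tilde n = 0$ case you write that ``it suffices to show the numerator $\tilde f$ has the opposite strict sign on the open interval,'' but $\tilde f(i\psi\omega')$ is unbounded as $\psi\to 0^+$, so interpolation of $\tilde f$ on $[0,1]$ (or on any subinterval with $0$ in its closure) with a finite $M_\rho$ bound is not available, and verifying its sign there requires the factored-form-plus-root-bound argument you have instead allocated to the $\tilde n = 1$ case, where it is unnecessary.
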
  

\begin{proof}
Let $k = 0.99$ and consider the case $\alpha = \omega + i\psi \omega'$, $\psi \in [0,1]$. To evaluate the sign of $\lambda_1(\xi(\alpha))$ defined in equation \eqref{innerprod2}, we determine the signs of $\tilde f(\alpha) = \tilde f_1(\alpha)+\tilde f_2(\alpha)/\omega^2$ and $\tilde g(\alpha)$ and their derivatives with respect to $\psi$.  See \eqref{innerprod2} for the definitions of these functions. We use Chebyshev interpolation to approximate the integrands appearing in the definition of these functions. To apply the approximation error bound given in equation \eqref{interp-bound2}, we must specify a radius $\rho_x > 1$ so that the integrands to be interpolated are analytic on and inside the stadium $E_{\rho_{x}} := \{(\rho_xe^{i\theta}+e^{-i\theta}/\rho_x)/2| \theta \in [0,2\pi]\}$. The functions of interest are analytic so long as $\rho_x$ is chosen sufficiently small to avoid the poles of $v(x)$. Recall from equation \eqref{newvdef} and subsequent discussion that $v(x) := \vt_1^2(\pi (x+i\omega' +\tilde n \omega+ i\psi \omega')/2\omega)\vt_1^{-2}(\pi (x+i\omega')/2\omega)e^{i\xi x}$. The Jacobi Theta function $\vt_1(\cdot)$ is analytic and its zeros are the set $\{m\pi +n\pi i\omega'/\omega | m,n\in \N \}$. A simple computation shows that if we take $|\Im(x)|< \omega'/\omega$, then $\vt_1(\pi (x+i\omega')/2\omega) \neq 0$. Setting $c = 0.9\omega'/\omega$ and defining $\rho_x := c + \sqrt{c^2+1}\in [   1.50919391484325,   1.50919391484326] $, we have that $\vt_1(\cdot)$ has no zeros inside or on the stadium $E_{\rho_{x}}$, hence $v(x)$, and thus the integrands to be interpolated, are analytic in $x$ inside and on the stadium $E_{\rho_{x}}$. 

Next we bound the modulus of the relevant functions on $E_{\rho_{x}}$. To obtain this bound, we must find a lower bound on $\vt_1(\pi (x+i\omega')/2\omega)$ for $x\in E_{\rho_{x}}$, but by the parity and mirror symmetry properties of the Jacobi Theta function, we actually only need to consider $\vt_1(\pi(x+i\omega')/2\omega)$ for $x\in \{(\rho_xe^{i\theta}+e^{-i\theta}/\rho_x)/2| \theta \in [0,\pi/2]\}$. We used 8000 intervals of even width to step through the parameter $\theta$ to obtain the lower bound. For example, for the $\theta$ interval [   0.02513274122871,   0.02532909076957], we find $|\vt(\pi(x(\theta)+i\omega')/2\omega)|\geq M$ where $M \in
[   2.12547636195776,   2.12643781736408]$. Using INTLAB's inf function, we compute a machine-representable lower bound on the intervals representing the modulus of  $\vt(\pi(x(\theta)+i\omega')/2\omega)$ and then take the minimum of all of these. We find that 0.324078550629158 is a lower bound for $|\vt(\pi(x(\theta)+i\omega')/2\omega)|$ for $x\in \rho_{x}$. See the documentation for lower\_bound.m given in \cite{Bdoc} for the code and details. Next we use the q-series representation of $\vt_1(\cdot)$ to obtain an upper bound on $\vt_1(\cdot)$ and its first four derivatives analytically, which we then compute with interval arithmetic. As an example, in computing an upper bound $M_x$ for interpolation in $x$ of the integrand associated with $\tilde f(\alpha)$, we find $M_x\in 
[10^{16}\times  7.08053733846663,  10^{16}\times 7.08053733846667]$. We find that $M_x = 2.55e+23$ is an upper bound for all functions to be interpolated. See bound\_theta1\_m.m in \cite{Bdoc} for the source code and truncation error bounds. We use the lower bound and upper bounds we found for $\vt_1(\cdot)$ and its derivatives to bound the integrands we are to interpolate. See bound\_numer.m in \cite{Bdoc} for details. 

Substituting the upper bound $M_x$ we just found and $\rho_x$ into equation \eqref{interp-bound2}, we find that $N = 241$ interpolation nodes suffices to approximate the integrands of interest for any $\psi \in [0,1]$ with an interpolation error bound of $\eps \in 10^{-18} \times [   0.96880172896556,   0.96880172896646]$. This serves as an example of the efficiency of analytic interpolation, where a crude bound of order $10^{23}$ and 241 interpolation nodes result in an approximation error of order $10^{-18}$. See N\_nodes.m in \cite{Bdoc} for computational details.

Hereafter, when we compute $\tilde f_1(\cdot)$, $\tilde f_2(\cdot)$, $\tilde g(\cdot)$, or their derivatives with respect to $\psi$, we do so with interval arithmetic, Chebyshev interpolation in $x$ with error bounds, and Chebyshev integration described in Section \ref{chebyshev}. Our next step is to interpolate in the variable $\psi$. As we did for $x$, we choose $\rho_{\psi} > 1$. Note that $\rho_{\psi}$ must be chosen so that $\xi(\omega + i\psi \omega')$ does not have a pole inside or on the stadium $E_{\rho_{\psi}}$. The poles of $\xi$ are $z = 2m \omega + 2n \omega'$. Setting
$2m\omega + 2n\omega' = \omega + i\psi \omega'$ with $\psi = 1/2 + \tilde \psi/2$, we
find that $|\Im(\tilde \psi)|< -\frac{\pi}{\log(q)}$ is necessary and sufficient to ensure analyticity. We set $c:= 0.95 \pi /|\log(q)|$ and set $\rho_{\psi} = c+\sqrt{c^2+1}\in [   4.07266431471885,   4.07266431471886]$. We bound the modulus of $\tilde f_1(\cdot)$, $\tilde f_2(\cdot)$, $\tilde g(\cdot)$, and their derivatives with respect to $\psi$ in the same way we bound the integrands when interpolating in the variable $x$. Here we let $\psi\in E_{\rho_{\psi}}$, but the same lower bound on $|\vt(\pi(x(\theta)+i\omega')/2\omega)|$ applies, and we may bound the modulus of the integrands as before and then multiply that number by the width of the interval on which we integrate. We find that the relevant functions are bounded above in modulus by $ M_{\psi} \in  10^{19}\times [   7.31602374789772,   7.31602374789773] $. See numer\_bound.m in \cite{Bdoc} for details. Substituting the bound $M_{\psi}$ and $\rho_{\psi}$ into \eqref{interp-bound2}, we find that $N = 42$ interpolation nodes suffice to assure interpolation approximation error does not exceed $\eps_{\psi} \in   10^{-5} \times [   0.32664460793827,   0.32664460793829]$.

We used the above error bounds, number of interpolation nodes, and interpolation error bounds to obtain interval inclusions of $\tilde f(\omega+i\psi\omega')$ and $\tilde g(\omega+i\psi\omega')$ for $\psi \in [0,1]$. For $a = 0.1$ and $b = 0.9$, we verified that $\tilde f(\omega+i\psi\omega') \geq 5.353348309012314>  0$ and $\tilde g(\omega+i\psi\omega')\leq -0.840361567746836 < 0$ for $\psi \in [a,b]$, $\pd{}{\psi}\tilde f(\omega+i\psi\omega')\geq 11.811632963420863 > 0$ and $\pd{}{\psi}\tilde g(\omega+i\psi\omega')\leq -5.495247935051245 < 0$ for $\psi \in [0,a]$, and $\pd{}{\psi}\tilde f(\omega+i\psi\omega')\leq -5.278723771094692\times 10^2 < 0$ and $\pd{}{\psi}\tilde g(\omega+i\psi\omega')\geq 1.163460415865856\times 10^2 > 0$ for $\psi \in [b,1]$.

Recalling that $\tilde f(\omega+i\psi\omega')=g(\omega+i\psi\omega') = 0$ for $\psi \in\{0,1\}$, as shown in Section \ref{zeros}, we have numerically verified $\lambda_1(\xi(\alpha)) < 0$ for $\alpha = \omega+i\psi \omega'$. In Figure \ref{fig96} we plot $\tilde f(\alpha)$, $\pd{}{\psi}\tilde f(\alpha)$, $\tilde g(\alpha)$, and $\pd{}{\psi}\tilde g(\alpha)$ for $\alpha = \omega + i\psi \omega'$. 

Next, we treat the case $\alpha = i\psi \omega'$. Since $\xi(i\psi \omega')\to \infty$ as $\psi \to 0$, we must treat this case differently. Indeed, we cannot obtain interpolation bounds since $\tilde f(\cdot)$ includes powers of $\xi(i\psi \omega')$ in its definition. We avoid this problem by interpolating the coefficient functions $c_j(x)$ and $h_j(x)$ defined in equation \eqref{factored} and then forming $\tilde f(i\psi \omega')$ as a polynomial expansion in $\xi$. The general strategy is to use interval arithmetic to show that $\pd{}{\psi} \tilde f(i\psi\omega')< 0 $ for $\psi \in[b,1]$ for some $0<b<1$. Using polynomial root bounds, we show that $f(i\psi\omega') $ has no zeros for $\psi \in [0,a]$ for some $0<a<b$. Finally, we evaluate with interval arithmetic the polynomials given in \eqref{factored} using the interpolated coefficients to verify that $\tilde f(i\psi\omega') > 0$ for $\psi \in [a,b]$. Recalling that we have shown that $\tilde f(i\omega') = 0$ in Section \ref{zeros} and that $\tilde g(i\psi \omega')<0$ in \ref{gspecial}, this gives that $\lambda_1(\xi(\alpha)) < 0$ for $\alpha = i\psi \omega'$. In Figure \ref{fig96} we plot $\tilde f(i\psi \omega')$ and $\pd{}{\psi} \tilde f(i\psi \omega')$.

We use the same $\rho_x$ and $\rho_{\psi}$ as we used for $\alpha =\omega+i\psi \omega'$. We find that $M_x = 1.421928494683729 \times 10^{23}$ is an upper bound on the modulus of the coefficient functions $c_j(x)$ and $h_j(x)$ for $x\in E_{\rho_{x}}$, and $M_{\rho_{\psi}}=1.938556518787788\times 10^{19}$ is an upper bound for the integrals of the coefficient functions $c_j(x)$ and $h_j(x)$ as functions of $\psi$. Substituting the bounds into equation \eqref{interp-bound2}, we find $N_x = 240$ interpolation nodes suffices to guarantee the approximation error of the coefficient functions does not exceed $\eps_x\in 10^{-18}\times [0.81368237383580,   0.81368237383655]$. Similarly $N_{\psi}  = 41$ guarantees an error bound of $\eps_{\psi}\in 10^{-5}\times[   0.35249865941173,   0.35249865941175]$.
 
Formulating $\tilde f(\alpha)$ from the coefficients $c_j(\psi,x)$ and $h_j(\psi,x)$, we have $\tilde f(\alpha) = \sum_{k=0}^5 p_k(\psi,x)(i\xi)^k$. We verify for $0<\xi\leq 10^{-3}$, using interval arithmetic to evaluate the Chebyshev interpolation polynomials, that $|p_0| \leq 0.078295154350480$, $|p_1|\leq 0.047185115122197$, $|p_2|\leq  0.012816747371156$, $|p_3|\leq 2.007891703388547$, $|p_4|\leq 1.862679700692074e-04$, and $|p_5| \geq 0.062630009852173$. Using the general upper bound $R = 1 + (1/a_n)\max(a_0,a_1,...,a_{n-1})$ on roots of polynomials $p(x) = \sum_{k=0}^n a_k x^k$, we find that $\tilde f(i\psi \omega') = 0$ for $0<\psi\leq 10^{-3}$ implies that $|\xi|\leq 33.059578277694790$. We compute  $\xi(10^{-3})\in 10^{3}[   4.25236053848855,   4.25236053849688]$. From Lemma \ref{lemma:properties} part (3), $\xi$ is decreasing on the interval $[0,10^{-3}]$, hence $\tilde f(i\psi\omega')\neq 0$ for $\psi \in [0,10^{-3}]$. 

Next we evaluate the Chebyshev interpolation polynomial to determine where $\tilde f(i\psi \omega')> 0 $. We find that this holds on the interval $[a,b]$ where $a = 0.00109909909910 $ and $b =  0.99863463463463$. Finally we verify with direct interval arithmetic computation that $\pd{}{\psi}\tilde f(i\psi\omega')<0$ on the interval $[b,1]$. Thus, we have shown that $\tilde f(i\psi \omega') > 0$ for $\psi \in [0,1]$. Then by Lemma \ref{gspecial} and equation \eqref{innerprod2}, $\lambda_1(\alpha)< 0$ for $\alpha = i\psi \omega'$ with $\psi \in [0,1]$. This completes the proof of the lemma.
\end{proof}

\begin{figure}[htbp]
 \begin{center}
$
\begin{array}{lcr}
(a) \includegraphics[scale=0.2]{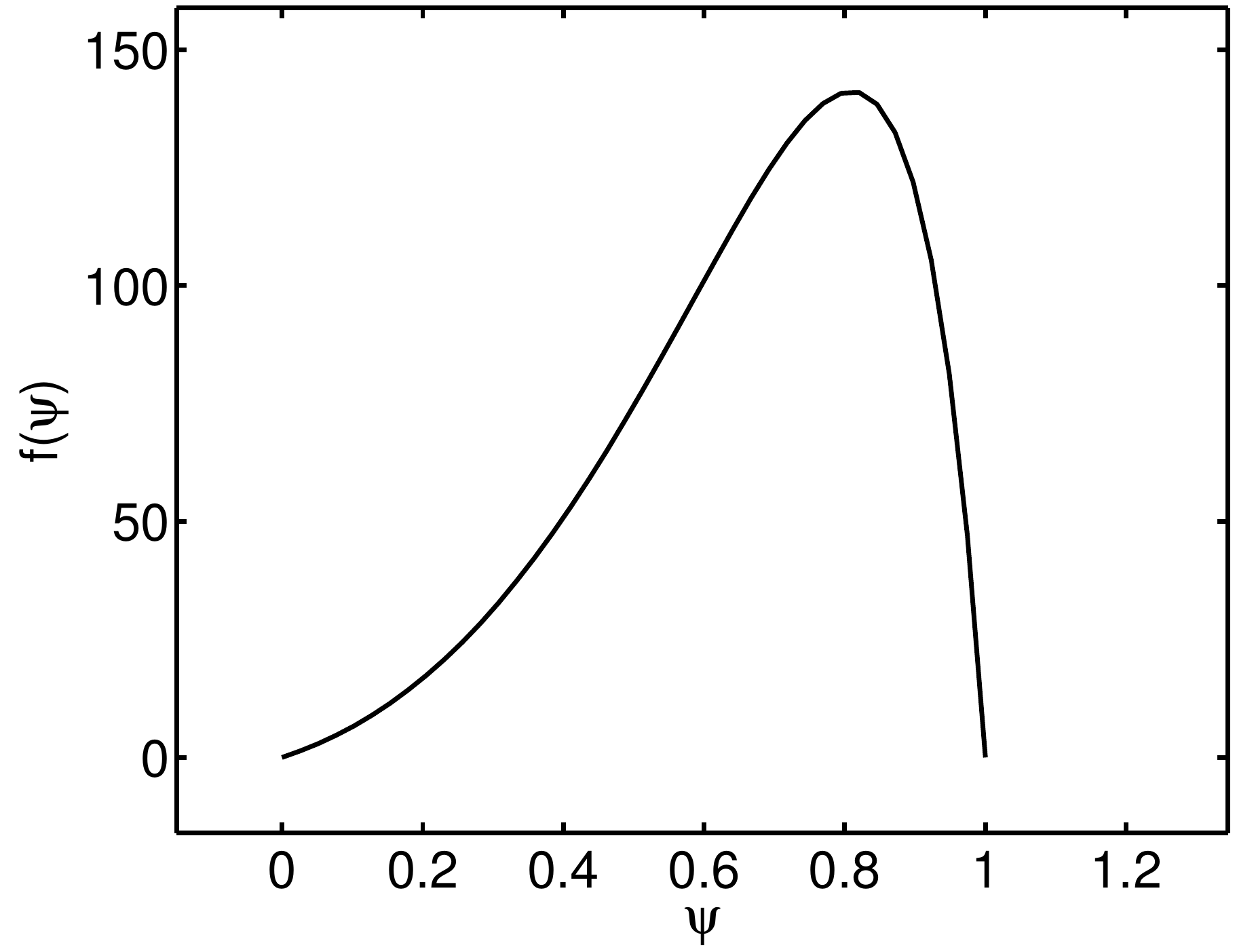} &
(b) \includegraphics[scale=0.2]{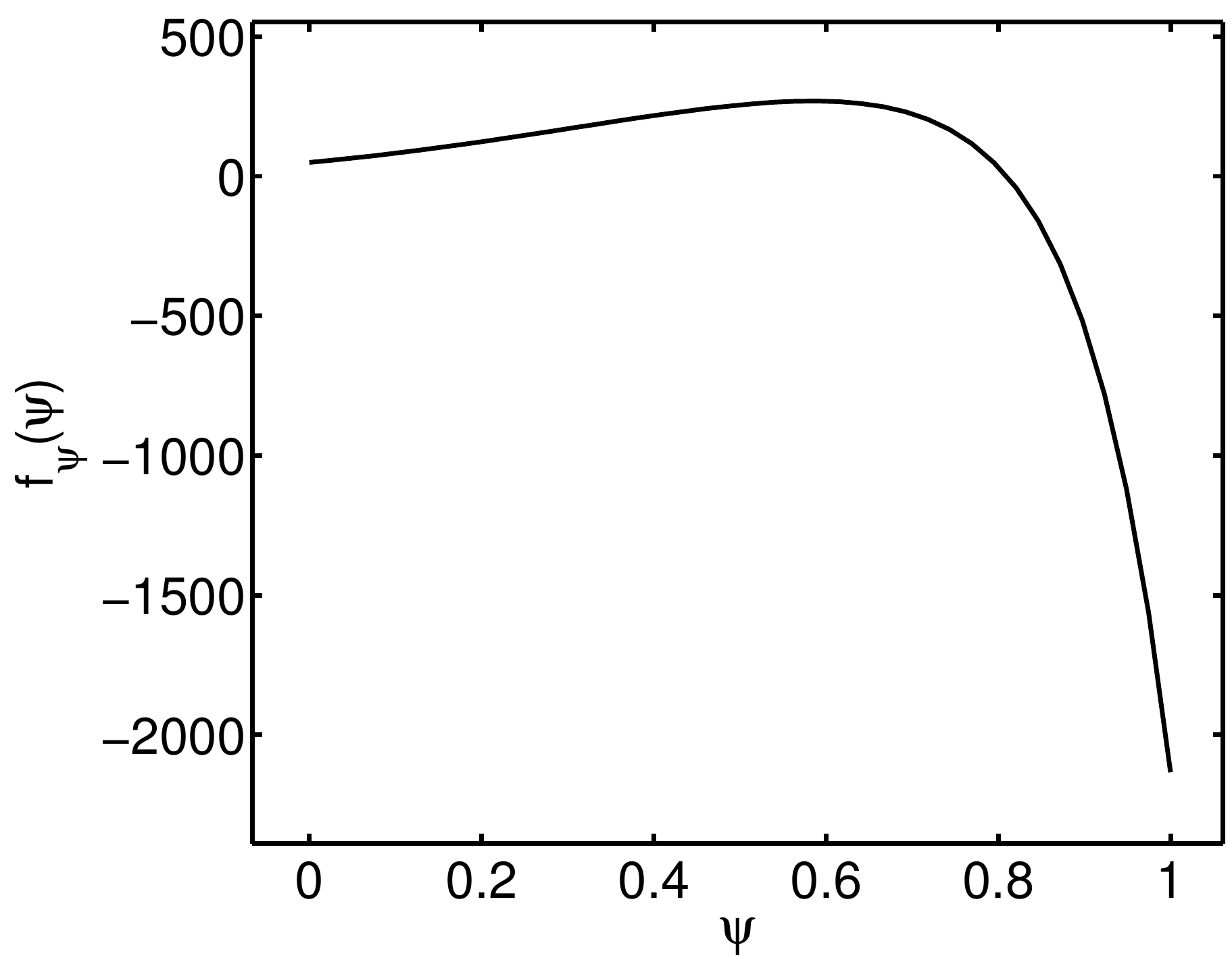}&
(c) \includegraphics[scale=0.2]{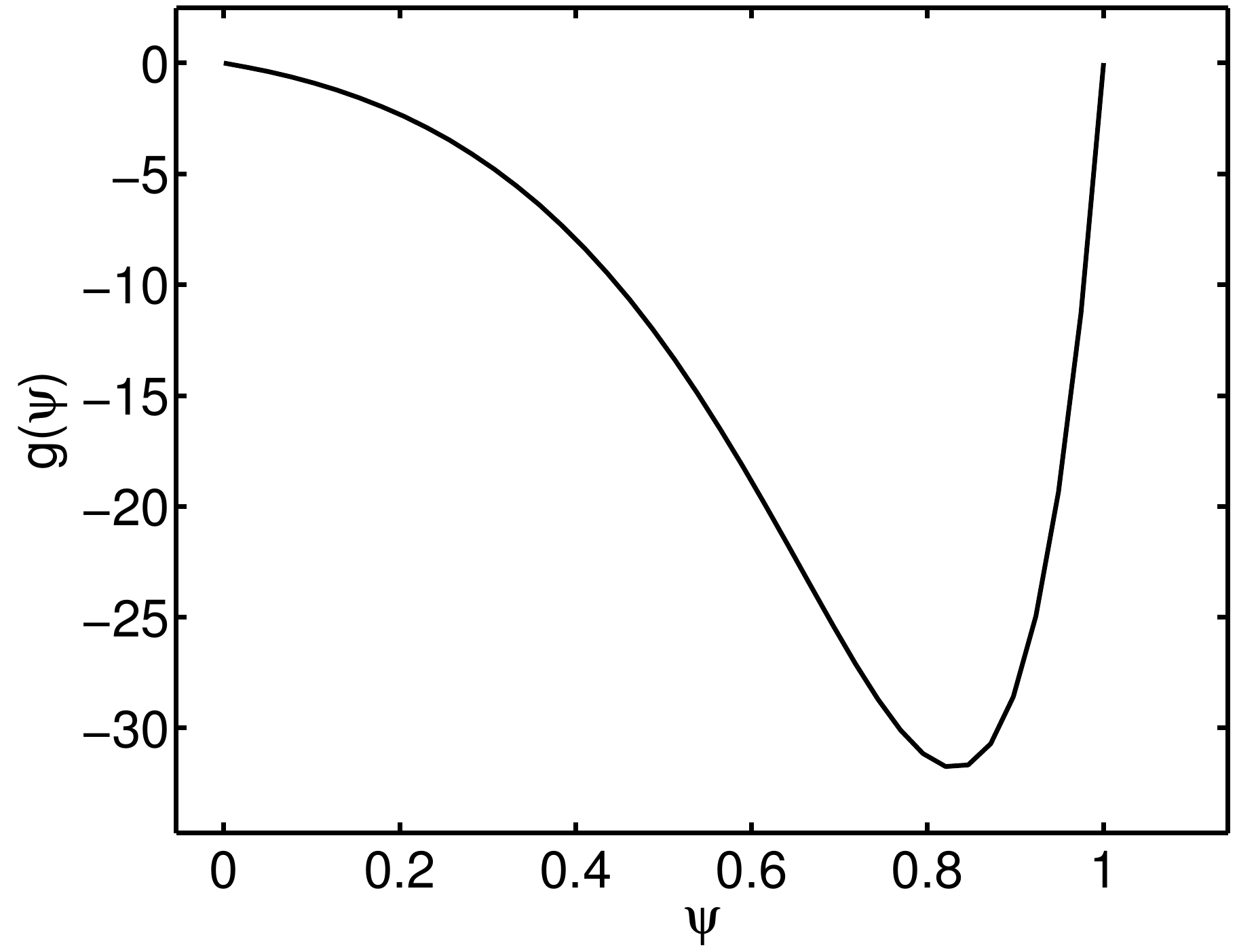} \\
(d) \includegraphics[scale=0.2]{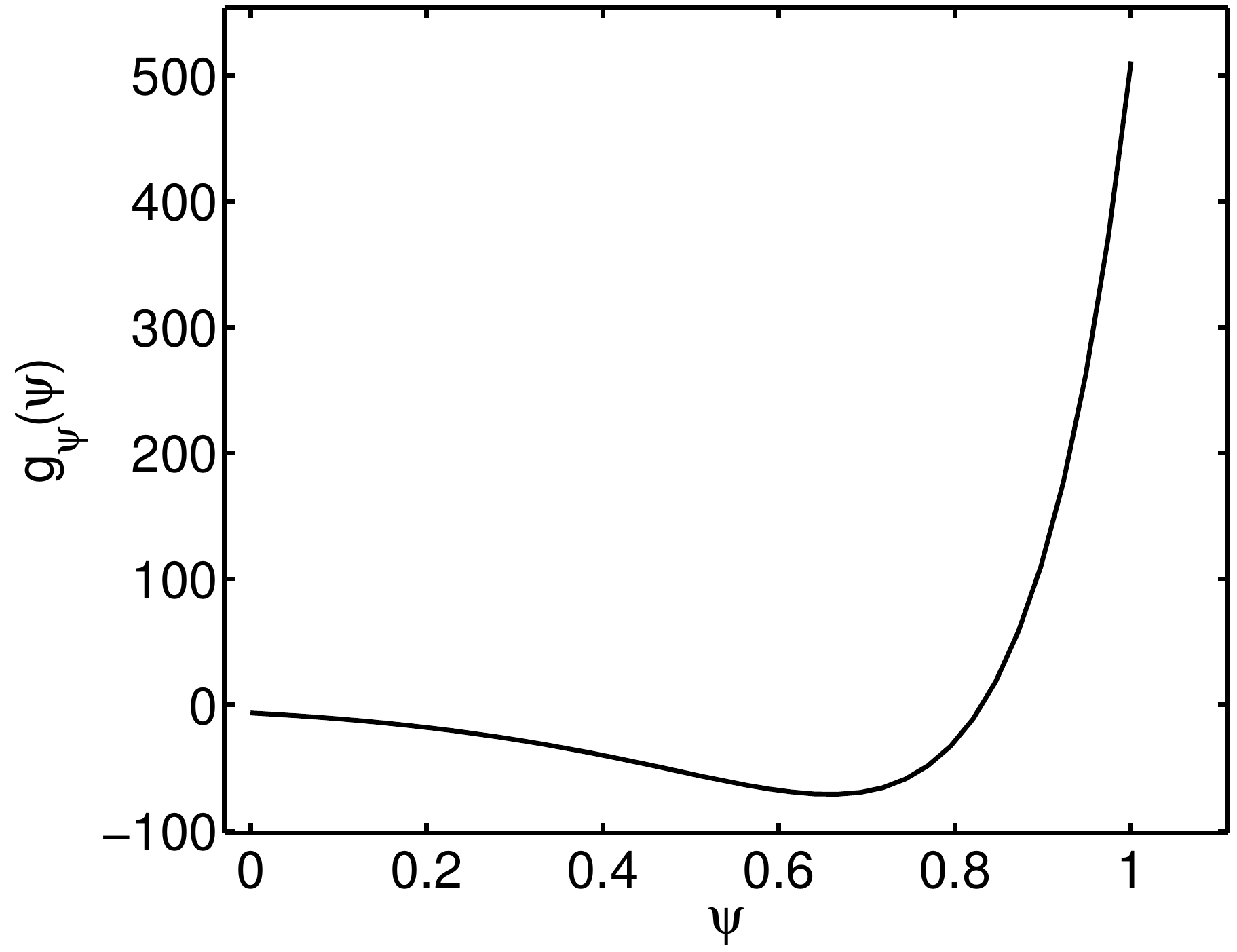}&
(e) \includegraphics[scale=0.2]{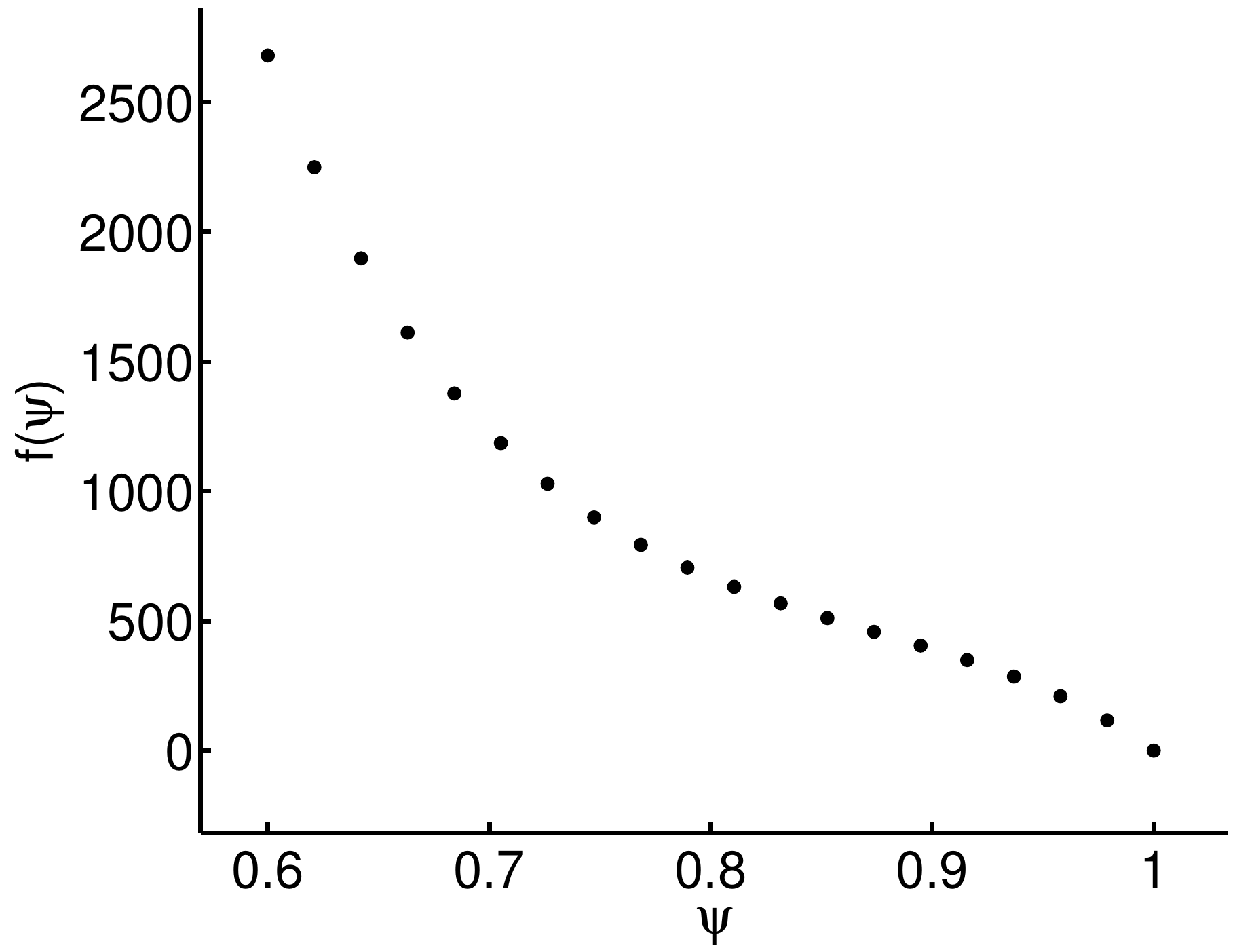} &
(f) \includegraphics[scale=0.2]{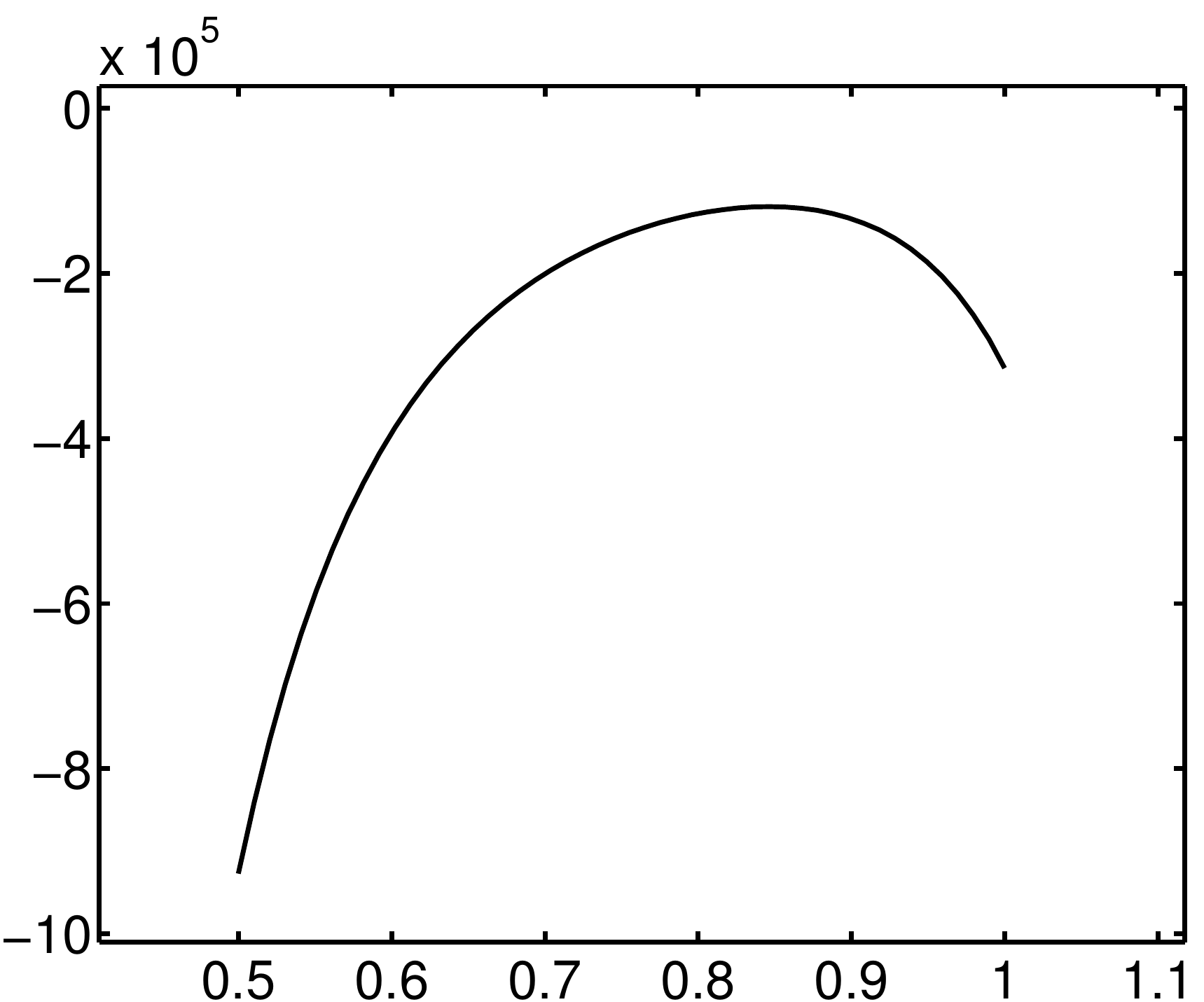}

\end{array}
$
\end{center}
\caption{(a) Plot of $\tilde f(\omega+ i\psi \omega')$ against $\psi$. (b) Plot of $\pd{}{\psi} \tilde f(\omega+ i\psi \omega')$ against $\psi$. (c) Plot of $\tilde g(\omega+i\psi \omega')$ against $\psi$, (d) Plot of $\pd{}{\psi} \tilde  g(\omega+ i\psi \omega')$ against $\psi$. (e) Plot of $\tilde f( i\psi \omega')$ against $\psi$, (f) Plot of $\pd{}{\psi} \tilde f(i\psi \omega')$ against $\psi$. As $\psi \to 0^+$, $\tilde f(i\psi \omega')\to \infty$.}
\label{fig96}
\end{figure}

% -------------------------------------------------------------------------------
% whole middle interval stability
% -------------------------------------------------------------------------------

\section{Stability for periods in the middle stability interval} \label{stabmid}

In this section we verify that conditions (A1), (A2), and (S1) hold for $k \in [0.9426,0.99999].$

\subsection{Simplicity of KdV eigenvalues}

In this section we show that for $k\in[0.942,0.9999984]$, corresponding to $X(k)\in[X_l,X_r]$ where $X_l \approx 8.43$ and $X_r \approx 26.07$, that the nonzero KdV eigenvalues of the linearized KdV operator are simple for all Floquet parameters $\xi \in [0,2\pi/X]$. By Lemma \ref{lemma:xi_zero}, $\lambda_{KdV} = 0$ only if $\xi = 0\mod \frac{2\pi }{X}$.
Recall that the KdV spectra are given by
$
\lambda_{KdV} = -4\wp'(\alpha),
$ where $\wp'$ is the derivative of the  Weierstrass elliptic function $\wp$, and $\alpha = n\omega + i\psi \omega'$.

\begin{lemma}\label{lemma:middle:simplicity}
The following hold for $k\in[0.942,0.9999984]$ and $\psi_0 = 0.95$: (1) $|\lambda_0(\omega+i\psi\omega')|<\lambda_0(i\psi_0\omega')$ for $\psi \in [-1,0]$, (2) $\xi(i\psi_0 \omega')<3\pi/X$, (3) the function $h(x,y)$ defined by $h(x,y):= (ci\lambda_0(\omega-ix\omega')-ci\lambda_0(iy\omega'))^2+(\omega\xi(\omega-ix\omega')-\omega\xi(iy\omega')-2\pi)^2$, where $c = -8\omega^3/(\pi\vartheta_1'(0))^3 $, is strictly convex for $(x,y)\in [0,1]\times [\psi_0,1]$, and (4) the nonzero KdV eigenvalues of the linearized KdV operator are simple for all Floquet parameters $\xi \in [0,2\pi/X]$.
\end{lemma}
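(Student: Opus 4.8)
The plan is to mirror, for the full parameter range $k\in[0.942,0.9999984]$, the structure already carried out for the single value $k=0.99$ in Section \ref{simplicity}. Claims (1), (2), and (3) are each a \emph{uniform-in-$k$} version of the corresponding claim in Lemma \ref{lemma:num_properties} (with the threshold relaxed from $\psi_0=0.98$ to $\psi_0=0.95$), and claim (4) is the analogue of Proposition \ref{p:simplicity}. The structural part of the argument — deducing simplicity of the nonzero KdV eigenvalues from the analytic monotonicity facts in Lemma \ref{lemma:properties} together with (1), (2), (3) of the present lemma and the pinning values $\xi(\omega)=0$, $\xi(\omega+i\omega')=\xi(i\omega')=2\pi/X$ from Lemma \ref{lemma:xi_zero} — is identical to the proof of Proposition \ref{p:simplicity} and carries over verbatim, since Lemma \ref{lemma:properties} was proved by identities valid for all $k\in(0,1)$. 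So the real content is the rigorous numerical verification of (1)–(3) uniformly over the one-parameter family $k\in[0.942,0.9999984]$.

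For this I would proceed as follows. First, reparametrize: $\lambda_0$, $\xi$, $\omega$, $\omega'$, and $h$ are all explicit (via the $q$-series of the Weierstrass functions and the selection principle \eqref{def:kappa} for $\kappa=\mathcal{G}(k)$), so each of the quantities in (1)–(3) is an explicit analytic function of $(k,\psi)$ or $(k,x,y)$. Second, subdivide the $k$-interval $[0.942,0.9999984]$ into subintervals — necessarily taking them progressively narrower as $k\to 1^-$, since $q=e^{-\pi\omega'/\omega}\to 1$ and the $q$-series converge ever more slowly, so that the interval enclosures of $\lambda_0$, $\xi$, $h$ degrade near the right endpoint (this is exactly the bootstrap/subdivision philosophy of Computational Detail 3 and Section \ref{chebyshev}). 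Third, on each $k$-subinterval, bound the relevant functions by interval arithmetic: for (1), enclose $\lambda_0(i\psi_0\omega')$ from below and $|\lambda_0(\omega+i\psi\omega')|$ from above over $\psi\in[-1,0]$ (using $100$-or-so $\psi$-subintervals as in Lemma \ref{lemma:num_properties}, or Chebyshev interpolation in $\psi$ with the bounds of Section \ref{chebyshev}); for (2), enclose $\omega\xi(i\psi_0\omega')$ from above and compare with $3\pi\omega/X=3\pi/2$; for (3), enclose $h_{xx}$ from below and the Hessian determinant $\Delta=h_{xx}h_{yy}-h_{xy}^2$ from below over the box $[0,1]\times[\psi_0,1]$, again via subdivision in $(x,y)$. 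Fourth, assemble (4) from (1)–(3) and Lemmas \ref{lemma:properties}, \ref{lemma:xi_zero} exactly as in Proposition \ref{p:simplicity}.

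The main obstacle is the behavior as $k\to 1^-$: there $\omega'/\omega\to 0^+$ forces $q\to 1^-$, the $q$-series lose their rapid convergence, and the naive interval enclosures blow up, so that a fixed uniform subdivision will not close the estimates near $k=0.9999984$. Handling this requires either a $k$-adaptive subdivision whose mesh refines geometrically toward the right endpoint, or a change of variable / resummation of the $q$-series (e.g.\ exploiting the modular transformation $q\leftrightarrow \tilde q=e^{-\pi\omega/\omega'}$, which is small in precisely this regime) to restore fast convergence; I expect the cleanest route is the latter where available and adaptive subdivision elsewhere, with the interpolation-error bounds \eqref{interp-bound2} and the derivative bounds \eqref{eq:derbounds} controlling the cost. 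A secondary but routine obstacle is bookkeeping the Hessian-positivity check for $h$ over a two-dimensional box uniformly in a third parameter $k$; this is a three-dimensional interval sweep and is handled by the two-step carving strategy of Section \ref{chebyshev}. Everything else is a direct, if lengthy, interval-arithmetic computation, and the details (subdivision points, enclosures, source files) would be recorded in \cite{Bdoc}.
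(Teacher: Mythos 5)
Your proposal matches the paper's approach: the paper's proof of this lemma consists precisely of noting that (1)--(3) are verified by interval arithmetic over the $k$-range (details deferred to \cite{Bdoc}, with $h_x>0$ in the published text being a typo for $h_{xx}>0$), and that (4) then follows as in Proposition \ref{p:simplicity} from (1)--(3) together with Lemmas \ref{lemma:xi_zero}, \ref{lemma:properties}, and \ref{lemma:num_properties}. One caveat on your remark that ``Lemma \ref{lemma:properties} was proved by identities valid for all $k\in(0,1)$'' so that the structural step carries over verbatim: parts (1)--(5) of that lemma are indeed $k$-independent Weierstrass/$q$-series identities, but part (6) --- the monotonicity $\partial_\beta \xi(\omega+i\beta)>0$ --- was established numerically for $k=0.99$ alone, so it too must be re-verified by interval arithmetic over $k\in[0.942,0.9999984]$; the paper's proof is equally silent on this point, implicitly relegating it to \cite{Bdoc}, but a self-contained write-up should flag the re-verification of Lemma \ref{lemma:properties}(6) explicitly alongside the $k$-sweep for (1)--(3).
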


\begin{proof}
(1)-(3) We use interval arithmetic to verify that $|\lambda_0(\omega+i\psi\omega')|< \lambda_0(i\psi_0\omega')$ for $\psi \in [-1,0]$, $\xi(i\psi_0 \omega')<3\pi/X$, and that $h_x(x,y) >0$ and $h_{xx}(x,y)h_{yy}(x,y)-h_{xy}^2(x,y) > 0$ for $(x,y)\in [0,1]\times [\psi_0,1]$. See section simplicity of \cite{Bdoc} for details. (4) By parts (1)-(3) and the Lemmata \ref{lemma:xi_zero}, \ref{lemma:properties}, and \ref{lemma:num_properties}, the nonzero KdV eigenvalues of the linearized KdV operator are simple for all Floquet parameters $\xi \in [0,2\pi/X]$. 
\end{proof}

\subsection{Distinctness of $\alpha_j$}

\begin{lemma}\label{lemma:middle:distinct}
For $k\in[0.9,0.9999995]$, condition (A2) holds. 
\end{lemma}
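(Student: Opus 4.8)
The plan is to mimic the single-wave distinctness argument of Section~\ref{distinct}, now uniformly over the $k$-interval. Recall from \eqref{eq:distinct} that the characteristic velocities are
\[
V_i(w_1,w_2,w_3)=\frac{w_1+w_2+w_3}{3}+\frac{2(w_3-w_2)}{3}b_i,
\]
with the $b_i=b_i(k)$ given explicitly in terms of $K(k)$ and $E(k)$. Since the Riemann invariants satisfy $w_1\le w_2\le w_3$ with $k^2=(w_2-w_1)/(w_3-w_1)$, and $k\in[0.9,0.9999995]$ forces $k\neq 0$, we have $w_3>w_1$; hence the factor $\tfrac{2}{3}(w_3-w_2)$ in front of the $b_i$ is the same for all three indices and is strictly positive unless $w_2=w_3$, i.e. unless $k=1$, which is excluded. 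Therefore the $V_i$ are distinct if and only if the three numbers $b_1(k),b_2(k),b_3(k)$ are pairwise distinct, and condition (A2) reduces to verifying this separation for every $k$ in the stated interval.

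First I would record the three pairwise-difference functions $b_1-b_2$, $b_2-b_3$, and $b_1-b_3$ as explicit closed-form expressions in $K(k)$ and $E(k)$ (clearing the denominators $E-K$, $(1-k^2)K-E$, and $E$, each of which is nonzero on the interval and whose signs are fixed). Then, exactly as in the single-wave computation where one found $b_1\approx-1.413$, $b_2\approx-0.068$, $b_3\approx0.065$, I would evaluate each difference function with interval arithmetic on a partition of $[0.9,0.9999995]$ into finitely many subintervals, using the q-series / INTLAB evaluations of $K$ and $E$, and check that each difference interval is bounded away from zero (with a definite sign) on every piece. Because the $b_i$ vary slowly away from $k=1$ and the computation only involves complete elliptic integrals, a modest subdivision should suffice; near the right endpoint $k=0.9999995$ one may need finer subdivision or a Taylor-expansion post-processing step as in Section~\ref{evaluate} to keep the output intervals narrow. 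Concluding, the distinctness of $\{b_i\}$ on all subintervals yields distinctness of $\{V_i\}=\{\alpha_j\}$, which is (A2).

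The main obstacle I anticipate is not conceptual but numerical: ensuring the interval enclosures of $b_i(k)$ stay tight enough to separate $b_2$ and $b_3$, which in the single-wave case differ only in the second decimal place, over the whole range and in particular as $k\to1^-$ where $E-K$, $(1-k^2)K-E$, and $(1-k^2)K$ all degenerate and the naive quotients blow up or cancel. I would address this by rewriting the differences so that the degenerating factors cancel analytically before interval evaluation (e.g. expressing $b_1-b_2$ with a common denominator and simplifying the numerator), and by checking the sign of each simplified numerator and denominator separately on each subinterval; if a residual near-cancellation remains close to $k=1$, a one-sided asymptotic expansion of $b_i(k)$ as $k\to1$ handles the sliver not reached by the interval grid. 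The remaining steps—partitioning, interval evaluation, and sign verification—are routine given the machinery of Section~\ref{chebyshev} and the INTLAB elliptic-integral routines already in use.
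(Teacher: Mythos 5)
Your proposal matches the paper's approach: the paper likewise reduces (A2) to pairwise distinctness of the $b_i(k)$ in \eqref{eq:distinct} (via the observation that $k\in(0,1)$ forces $w_3>w_2$) and then verifies this by interval-arithmetic evaluation of the $b_i$ over the stated $k$-range, deferring computational details to the supplementary documentation. The additional remarks you make about handling near-cancellation as $k\to 1^-$ are sensible safeguards but not part of the paper's stated argument.
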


\begin{proof}
From the discussion in Section \ref{distinct}, it suffices to show that the $b_i(k)$ described in equation \ref{eq:distinct} are distinct in order to show that condition (A2) holds for a given $k$. We use interval arithmetic to verify the $b_i$ are distinct for $k\in[0.9,0.9999995]$. See Section distinct of \cite{Bdoc} for details.
\end{proof}

\subsection{Stability condition (S1)}\label{stabmid}

In this section we show that the stability condition (S1) holds for the middle stability region.

\begin{lemma}\label{lemma:middle:stability}
The stability condition (S1) holds for $k\in$[0.9426,0.9999983].
\end{lemma}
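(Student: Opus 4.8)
The plan is to lift the single-wave argument of Lemma~\ref{p:stab} to the whole interval by treating the elliptic modulus $k$ as one more interpolation variable and carving $[0.9426,0.9999983]$ into a finite, adaptively chosen collection of subintervals $[k_j,k_{j+1}]$. On each subinterval I would proceed in the same two stages as in Section~\ref{chebyshev}: first Chebyshev-interpolate in the spatial variable $x$ the integrands in the numerator and denominator of \eqref{innerprod2} (and, for the branch $\alpha=i\psi\omega'$, the coefficient functions $c_j(x),h_j(x)$ of \eqref{factored}), then integrate in $x$ term-by-term using $\int_{-1}^1 T_n\,dx=(1+(-1)^n)/(1-n^2)$; second, Chebyshev-interpolate the resulting functions $\tilde f_1,\tilde f_2,\tilde g$ and their $\psi$-derivatives in the two variables $(\psi,k)\in[0,1]\times[k_j,k_{j+1}]$. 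The Bernstein radii $\rho_x,\rho_\psi,\rho_k$ must be picked so that all the functions stay analytic on the stadia: for $\rho_x$ the constraint is still $|\Im x|<\omega'/\omega$, to miss the zeros of $\vt_1(\pi(x+i\omega')/2\omega)$, while for $\rho_\psi$ and $\rho_k$ the constraints are that the lattice $2m\omega+2n\omega'$ (poles of $\xi$) and the theta-zeros stay off the stadia; since $\omega,\omega',q$ move with $k$, these constraints shrink as $k$ grows, which is exactly what forces the subdivision.

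On each subinterval I would then bound the modulus $M$ of every function to be interpolated by a crude but rigorous interval evaluation of its $q$-series on the relevant stadium --- the cheap-sup-norm/fast-convergence bootstrap emphasized in Section~\ref{chebyshev} --- and feed $M$ together with $\rho_x,\rho_\psi,\rho_k$ into \eqref{interp-bound2}, \eqref{eq:derbounds}, and the multivariable error-composition inequalities of Section~\ref{chebyshev} to fix the node counts $N_x,N_\psi,N_k$ and a total error bound $\eps$. Evaluating the interpolants with interval arithmetic through the Taylor expansion in $\theta$ and $\nu$ of Section~\ref{evaluate} (to keep the output intervals narrow), and using the corollary to Lemma~\ref{lemma:kappa-monotone} to replace the coarse interval enclosure of $1/\omega^2$ in $\tilde f=\tilde f_1+\tilde f_2/\omega^2$ by its values at the endpoints $k_j,k_{j+1}$, I would verify on each subinterval that there are $0<a<b<1$ with $\tilde f>0$ and $\tilde g<0$ on $[a,b]\times[k_j,k_{j+1}]$, with $\partial_\psi\tilde f>0$ and $\partial_\psi\tilde g<0$ on $[0,a]\times[k_j,k_{j+1}]$, and with $\partial_\psi\tilde f<0$ and $\partial_\psi\tilde g>0$ on $[b,1]\times[k_j,k_{j+1}]$. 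Since $\tilde f=\tilde g=0$ at $\psi\in\{0,1\}$ by Lemma~\ref{lemma:zeros} and $\tilde g<0$ on $\alpha=i\beta$ by the lemma of Section~\ref{gspecial}, these sign and monotonicity facts force $\lambda_1(\xi(\alpha))<0$ for $\alpha=\omega+i\psi\omega'$ throughout.

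For the branch $\alpha=i\psi\omega'$ I would reuse the device of Lemma~\ref{p:stab}: because $\xi(i\psi\omega')\to\infty$ as $\psi\to0$, rather than interpolating $\tilde f$ itself I would write $\tilde f(i\psi\omega')=\sum_{n=0}^{5}p_n(\psi,k)(i\xi)^n$ with the $p_n$ assembled from the interpolated $c_j,h_j$, bound $|p_n|$ uniformly in $k$ on a small initial interval $[0,a]$, and apply the root bound $R=1+\max_{n<5}|p_n|/|p_5|$ to conclude that a zero of $\tilde f$ there would force $|\xi|$ below an explicit constant, contradicting the lower bound on $\xi(a\,\omega'(k))$ coming from the monotonicity of $\xi(i\beta)$ in Lemma~\ref{lemma:properties}; on the remainder of the $\psi$-range I would check $\tilde f>0$ on a middle subinterval and $\partial_\psi\tilde f<0$ near $\psi=1$ by direct interval evaluation, and then combine with $\tilde f(i\omega')=0$ and the sign of $\tilde g$ to get $\lambda_1<0$. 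The hard part is the right endpoint $k=0.9999983$: there $\omega'/\omega\to0$, so the nome $q=e^{-\pi\omega'/\omega}\to1^-$, the $q$-series converge ever more slowly, the sup bounds $M$ explode, and the admissible $\rho$'s collapse, so the $k$-subdivision must be refined sharply and the node counts grow; balancing this blow-up against the exponential interpolation decay --- and doing so uniformly, including for the $\xi\to\infty$ root-bound step --- is where essentially all the difficulty sits. The purely mechanical book-keeping over the finitely many subintervals is what the source code documented in \cite{Bdoc} carries out.
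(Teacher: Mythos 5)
Your proposal matches the paper's proof in essentially every structural respect: interpolate in $x$, integrate the Chebyshev series term by term, then interpolate the resulting integrals in $\psi$ and a parameter variable; subdivide the parameter range so the Bernstein stadia miss the theta-zeros and the poles of $\xi$; separately handle $\alpha=\omega+i\psi\omega'$ (sign of $\tilde f,\tilde g$ in a middle $\psi$-strip, signs of $\partial_\psi\tilde f,\partial_\psi\tilde g$ in the two end-strips, combined with the endpoint vanishing from Lemma~\ref{lemma:zeros}) and $\alpha=i\psi\omega'$ (polynomial root bound in $\xi$ near $\psi=0$, direct evaluation of the factored form in a middle strip, $\partial_\psi\tilde f<0$ near $\psi=1$, combined with the sign of $\tilde g$ from the lemma in \ref{gspecial}). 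The one small deviation is that you carve and interpolate in the elliptic modulus $k$ directly, whereas the paper's Tables~\ref{tableB}--\ref{tableD} carve and interpolate in the nome $q=e^{-\pi K(\sqrt{1-k^2})/K(k)}$; since $K,E$ have branch points at $k=\pm1$, the radius of $k$-analyticity collapses as $k\to1$, while the $q$-series are entire in $q$ on $|q|<1$, so the paper's choice yields noticeably larger admissible $\rho$'s and fewer subintervals near the upper end. This is a computational-efficiency distinction rather than a logical gap, and your account otherwise reproduces the paper's argument.
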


\begin{proof}
Recall the definition of $\Re(\lambda(\xi))$, $f(\alpha)$, and $g(\alpha)$ given in equation \eqref{innerprod}. In Section \ref{gspecial}, we show that $g(i\psi\omega')>0$ for $\psi\in(0,1)$, and in Section \ref{zeros} we show that $f(\alpha) = g(\alpha)=0$ for $\alpha \in \{\omega,\omega+i\omega',i\omega'\}$. For convenience, we reformulate the stability condition in \ref{formulation} and use analytic interpolation to evaluate $f(\alpha)$, $g(\alpha)$, $f_{\psi}(\alpha)$, and $g_{\psi}(\alpha)$ for other values of $\alpha = \tilde n \omega +i\psi \omega'$, $\tilde n \in \{0,1\}$, $\psi \in [0,1]$. We describe the method of interpolation in \ref{chebyshev}. We provide in Tables \ref{tableB}, \ref{tableC}, and \ref{tableD} details of the function bounds, stadium radii, and number of interpolation nodes used.

In the case $\alpha = \omega+i\psi \omega'$ we verify, using the interpolation polynomials, that $\Im{f(\alpha)}>0$ and $\Im{g(\alpha)}<0$ on the interval $\psi\in[(1+\cos(9\pi/10))/2,(1+\cos(\pi/10))/2]$, $f_{\psi}(\alpha)>0$ and $g_{\psi}(\alpha) < 0$ for $\psi \in [0,(1+\cos(9\pi/10))/2]$, and $f_{\psi}(\alpha)<0$ and $g_{\psi}(\alpha) > 0$ for $\psi \in [(1+\cos(\pi/10))/2,1]$. 

In the case that $\alpha = i\psi \omega'$, we factor (see \ref{factored}) $\tilde f(\alpha)$ in polynomial form $\tilde f(\alpha) = \sum_{k = 0}^5 \tilde f_k(\alpha) (i\xi)^k$ and verify that as a polynomial of $\xi$, $\tilde f(\alpha)$ has no roots in $\xi(\psi)$ for $\psi \in [0,10^{-3}]$. We then verify in factored form that $f(\alpha) > 0$ for $\psi \in [10^{-3},0.5]$. We use the factored form because $f(\alpha) \to \infty$ as $\psi \to 0$. For $\psi \in [0.5,1]$, we interpolate $f(\alpha)$ without factoring in $\xi$ and verify that $f(\alpha)>0$ for $\psi \in[0.5,0.9]$ and that $f_{\psi}(\alpha) <0$ for $\psi \in [0.9,1]$.

Together, these facts imply that $\Re(\lambda_1) < 0$ for $k\in [0.9426,0.99999]$. See Sections driver\_stability\_n0 and driver\_stability\_n1 of \cite{Bdoc} for details. 
\end{proof}

\begin{table}[!b]
\begin{tabular}{|c|c|c|c|c|c|c|c|c|c|c|}
\hline
$q_L$&$q_R$&$M_x$&$M_q$&$M_{\psi}$&$\rho_x$&$\rho_q$&$\rho_{\psi}$&$N_x$&$N_q$&$N_{\psi}$\\
\hline
0.1 & 0.4 & 2.78e+30 & 1.82e+28 & 1.29e+27 & 1.3 & 2.73 & 2.81 & 447 & 108 & 102\\
\hline
0.35 & 0.5 & 4.86e+29 & 9.14e+29 & 1.01e+28 & 1.22 & 10.1 & 5.57 & 581 &  48 &  72\\
\hline
0.49 & 0.538 & 6.26e+31 & 2.15e+35 & 2.06e+30 & 1.19 & 36.4 & 8.05 & 678 &  34 &  76\\
\hline
0.53 & 0.6 & 3.37e+34 & 4.76e+38 & 1.77e+33 & 1.16 & 22.3 & 9.02 & 867 &  42 &  80\\
\hline
0.59 & 0.66 & 3.46e+39 & 5.31e+44 & 2.1e+38 & 1.13 & 19.2 & 10.8 & 1.17e+03 &  49 &  88\\
\hline
0.65 & 0.71 & 3.32e+49 & 2.45e+55 & 2.25e+48 & 1.1 & 19.2 & 13.2 & 1.65e+03 &  57 & 104\\
\hline
\end{tabular}
\caption{Here we record the details of our analytic interpolation when $\alpha = i\psi\omega'$. Here $q_L$ and $q_R$ are respectively the left and right endpoints of the interval in $q$ on which we interpolate. The bounds on the functions for interpolation in the variables $x$, $q$, and $\psi$ are given respectively by $M_x$, $M_q$, and $M_{\psi}$. The number of interpolation nodes used are respectively $N_x$, $N_q$, and $N_{\psi}$. } %Table created by table_B.m
\label{tableB}
\end{table}

\begin{table}[!b]
\begin{tabular}{|c|c|c|c|c|c|c|c|c|c|c|}
\hline
$q_L$&$q_R$&$M_x$&$M_q$&$M_{\psi}$&$\rho_x$&$\rho_q$&$\rho_{\psi}$&$N_x$&$N_q$&$N_{\psi}$\\
\hline
0.1 & 0.4 & 2.73e+30 & 1.8e+28 & 1.21e+27 & 1.3 & 2.73 & 2.81 & 446 & 108 & 102\\
\hline
0.35 & 0.5 & 3.04e+29 & 9.27e+29 & 4.61e+27 & 1.22 & 10.1 & 5.57 & 579 &  48 &  72\\
\hline
0.49 & 0.538 & 6.9e+30 & 2.09e+35 & 1.02e+29 & 1.19 & 36.4 & 8.05 & 666 &  34 &  74\\
\hline
0.53 & 0.6 & 1.02e+34 & 4.54e+38 & 2.66e+32 & 1.16 & 22.3 & 9.02 & 859 &  42 &  79\\
\hline
0.59 & 0.66 & 3.47e+39 & 5.15e+44 & 1.52e+38 & 1.13 & 19.2 & 10.8 & 1.17e+03 &  49 &  88\\
\hline
0.65 & 0.71 & 3.32e+49 & 2.44e+55 & 2.25e+48 & 1.1 & 19.2 & 13.2 & 1.65e+03 &  57 & 104\\
\hline
\end{tabular}
\caption{Here we record the details of our analytic interpolation when $\alpha =\omega+ i\psi\omega'$. Here $q_L$ and $q_R$ are respectively the left and right endpoints of the interval in $q$ on which we interpolate. The bounds on the functions for interpolation in the variables $x$, $q$, and $\psi$ are given respectively by $M_x$, $M_q$, and $M_{\psi}$. The number of interpolation nodes used are respectively $N_x$, $N_q$, and $N_{\psi}$. } %Table created by table_C.m
\label{tableC}
\end{table}

\begin{table}[!b]
\begin{tabular}{|c|c|c|c|c|c|c|c|c|c|c|}
\hline
$q_L$&$q_R$&$M_x$&$M_q$&$M_{\psi}$&$\rho_x$&$\rho_q$&$\rho_{\psi}$&$N_x$&$N_q$&$N_{\psi}$\\
\hline
0.1 & 0.4 & 2.61e+29 & 4.43e+25 & 1.35e+26 & 1.3 & 2.73 & 2.81 & 437 & 102 & 100\\
\hline
0.35 & 0.5 & 2.87e+29 & 9.3e+27 & 2.74e+27 & 1.22 & 10.1 & 5.57 & 579 &  46 &  71\\
\hline
0.49 & 0.538 & 6.59e+30 & 3.14e+33 & 9.21e+28 & 1.19 & 36.4 & 8.05 & 665 &  33 &  74\\
\hline
0.53 & 0.6 & 9.98e+33 & 3.47e+37 & 2.52e+32 & 1.16 & 22.3 & 9.02 & 859 &  41 &  79\\
\hline
0.59 & 0.66 & 3.44e+39 & 1.46e+44 & 1.5e+38 & 1.13 & 19.2 & 10.8 & 1.17e+03 &  48 &  88\\
\hline
0.65 & 0.71 & 3.32e+49 & 2.04e+55 & 2.24e+48 & 1.1 & 19.2 & 13.2 & 1.65e+03 &  57 & 104\\
\hline
\end{tabular}
\caption{Here we record the details of our analytic interpolation of the factored form of the functions when $\alpha = i\psi\omega'$. Here $q_L$ and $q_R$ are respectively the left and right endpoints of the interval in $q$ on which we interpolate. The bounds on the functions for interpolation in the variables $x$, $q$, and $\psi$ are given respectively by $M_x$, $M_q$, and $M_{\psi}$. The number of interpolation nodes used are respectively $N_x$, $N_q$, and $N_{\psi}$.  } %Table created by table_C.m
\label{tableD}
\end{table}

% -------------------------------------------------------------------------------
% Lower instability
% -------------------------------------------------------------------------------

\section{Instability for periods in the lower instability region}\label{lowunstab}

In this section we describe our results showing that the periodic traveling-wave solutions of \eqref{gks},  corresponding to $k\in$[\kUnstableLowerLeft,\kUnstableLowerRight], described in Proposition \ref{p:kdvsolnexpand} are spectrally unstable.

\begin{lemma}\label{lemma:lower:instability}
For $k\in$[\kUnstableLowerLeft,\kUnstableLowerRight], corresponding to $X\in[X_l,X_r]$ where $X_l \approx$ \XUnstableLowerLeft and $X_r \approx $\XUnstableLowerRight, $\Re(\lambda_1(\omega+i\omega'))>0$; hence, by Proposition \ref{p:kdvstab} periodic traveling-wave solutions of \eqref{gks} described in Proposition \ref{p:kdvsolnexpand} are spectrally unstable for $\delta > 0$ sufficiently small. 
\end{lemma}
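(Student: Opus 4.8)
The geometry of the problem at $\alpha=\omega+i\omega'$ is the crux. By Lemma~\ref{lemma:xi_zero} this value parametrizes the band-edge Floquet number $\xi=2\pi/X$, at which $\lambda_{KdV}(\omega+i\omega')=-4\wp'(\omega+i\omega')=0$; and by Lemma~\ref{lemma:zeros} it is a common zero of the numerator $\tilde f$ and the denominator $\tilde g$ of the ratio \eqref{innerprod2}. Hence $\Re\lambda_1(\omega+i\omega')$ is an indeterminate $0/0$ form and, as in Proposition~\ref{p:kdvstab}, must be read through the continuous extension along the Bloch curve. The plan is to parametrize that curve by $\psi$ via $\alpha=\omega+i\psi\omega'$ (along which $\xi(\omega+i\psi\omega')$ is real and, by Lemma~\ref{lemma:properties}(6), strictly increases to $2\pi/X$ as $\psi\to1$), to note that $\tilde f(\omega+i\psi\omega')$ and $\tilde g(\omega+i\psi\omega')$ are real-analytic in $\psi$ near $\psi=1$ and vanish there, and to resolve the limit by one application of L'H\^opital's rule,
\[
\Re\lambda_1(\omega+i\omega')=\lim_{\psi\to1}\frac{\tilde f(\omega+i\psi\omega')}{\tilde g(\omega+i\psi\omega')}=\frac{\partial_\psi\tilde f(\omega+i\psi\omega')}{\partial_\psi\tilde g(\omega+i\psi\omega')}\bigg|_{\psi=1},
\]
valid as soon as $\partial_\psi\tilde g(\omega+i\omega')\neq0$. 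It therefore suffices to certify, uniformly in $k$ over the stated interval, that $\partial_\psi\tilde g(\omega+i\omega')$ is bounded away from $0$ and that $\partial_\psi\tilde f(\omega+i\omega')$ has the \emph{same} sign, so that the quotient is strictly positive; this is the mirror image of the sign pattern (opposite signs, quotient negative) found in the proof of Lemma~\ref{p:stab} for the stable regime. Spectral instability for $\delta>0$ small then follows at once from Proposition~\ref{p:kdvstab}(i), with no need for (A1)--(A2).

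The two $\psi$-derivatives at $\psi=1$ will be computed rigorously with the interval-arithmetic machinery of Section~\ref{chebyshev}, essentially copying Section~\ref{stabmid}. Concretely: differentiate in $\psi$ the integrands of \eqref{innerprod2} in closed form using \eqref{vandder}--\eqref{factored} and the $q$-series of $\vartheta_1$ (note $\omega$ is independent of $\psi$, so only the theta-factors are differentiated and $1/\omega^2$ is enclosed tightly on each $k$-subinterval); subdivide the modulus range into subintervals of the nome $q=e^{-\pi\omega'/\omega}$, which here stays small --- of order at most about $0.15$ since $k\le\kUnstableLowerRight$ --- so that only a few nodes and few subintervals are needed; on each subinterval pick a stadium radius $\rho_x$ avoiding the zeros of $\vartheta_1(\pi(\cdot+i\omega')/2\omega)$ and a radius $\rho_q$, bound the differentiated integrands on the corresponding stadia by crude interval evaluation of the $q$-series, read off the needed node counts from \eqref{interp-bound2}, form the two-dimensional Chebyshev interpolant in $(x,q)$, integrate in $x$ by the quadrature of Section~\ref{evaluate} (keeping only the even coefficients), and evaluate the resulting interpolant at $\psi=1$ with the Taylor-expansion-in-$\theta$ device of Section~\ref{evaluate} to keep the interval widths small. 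Recording the bounds as in Tables~\ref{tableB}--\ref{tableD} and checking that the enclosures of $\partial_\psi\tilde f(\omega+i\omega')$ and $\partial_\psi\tilde g(\omega+i\omega')$ lie in explicit intervals of a common sign not containing $0$ finishes the verification.

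The main obstacle is the clean resolution of the $0/0$ limit: one must be sure that $\psi=1$ is a \emph{simple} zero of $\tilde g(\omega+i\psi\omega')$ throughout $[\kUnstableLowerLeft,\kUnstableLowerRight]$, i.e.\ that $\partial_\psi\tilde g(\omega+i\omega')$ never vanishes on this comparatively long $k$-interval. The stable-regime data suggest this derivative is robustly of one sign, but should it approach $0$ somewhere, the argument degrades and one would have to subdivide more finely and, where necessary, expand $\tilde f$ and $\tilde g$ to higher order in $(1-\psi)$ and compare leading coefficients of matched order. A secondary, purely bookkeeping, difficulty is the familiar tension between the very large sup-norm bounds $M_x$ of the theta-function combinations and the interpolation budget, absorbed --- as elsewhere in the paper --- by the bootstrap of Section~\ref{chebyshev} together with subdivision of the $q$-range; here, since $q$ is small, it is mild.
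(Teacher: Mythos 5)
Your proposal matches the paper's proof in essence: the paper likewise recognizes from Lemma \ref{lemma:zeros} that $\tilde f(\omega+i\omega')=\tilde g(\omega+i\omega')=0$, resolves the $0/0$ via $\Re\lambda_1(\omega+i\omega')=\tilde f_\psi(\omega+i\omega')/\tilde g_\psi(\omega+i\omega')$ (valid when $\tilde g_\psi\neq 0$), and then certifies positivity by interval arithmetic, interpolating $\tilde f_\psi$ and $\tilde g_\psi$ in the nome $q$ at fixed $\psi=\tilde n=1$ while subdividing the $q$-range to enlarge $\rho_q$ and keep node counts small (Table \ref{table:lower-instability}). The only cosmetic difference is that the paper directly verifies the quotient is positive rather than separately certifying a common sign and a lower bound on $|\tilde g_\psi|$, but the content is the same.
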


\begin{proof}
Recall from \eqref{innerprod2} that $\Re(\lambda_1(\alpha)) = \tilde f(\alpha)/\tilde g(\alpha)$ where $\alpha = \tilde n \omega + i\psi\omega'$. When $\psi = 1$,  $\tilde f(\alpha) = \tilde g(\alpha) = 0$ by Lemma \ref{lemma:zeros}; hence, $\Re(\lambda_1(\omega+i\omega')) = \tilde f_{\psi}(\omega+i\omega')/\tilde g_{\psi}(\omega+i\omega')$ so long as $\tilde g_{\psi}(\omega+i\omega')\neq 0$. We evaluate $\tilde f_{\psi}(\omega+i\omega')/\tilde g_{\psi}(\omega+i\omega')$ by interpolating $\tilde f_{\psi}$ and $\tilde g_{\psi}$ in the single variable $q$ with $\psi = \tilde n = 1$ fixed. See section \ref{chebyshev} for details of the method of interpolation. To keep the number of interpolation nodes needed small, we break up the interval in $q$ into subintervals. This increases the size of $\rho_q$ which significantly decreases the number of interpolation nodes needed. Using the interpolating polynomial with its error bounds, we use interval arithmetic to verify that $\Re(\lambda_1(\omega+i\omega')) > 0$ for $k \in[\kUnstableLowerLeft,\kUnstableLowerRight]$. The size of the subintervals in $k$ varies. See Table \ref{table:lower-instability} and  Section lower\_instability\_interpolation of \cite{Bdoc} for details. In Table \ref{table:lower-instability}, each row corresponds to a different interpolation polynomial. The first two columns indicate the range of $k$ values for which the interpolating polynomial was used and the third and fourth columns indicate the range of $q(k) = e^{-\pi K(\sqrt{1-k^2})/K(k)}$. The bounds on the modulus of the function being interpolated for each of the variables is given by $M_x$, $M_q$, and the radius of the stadium is given by $\rho_q$ and $\rho_{\psi}$. We used $N_x$ interpolation nodes in the variable $x$ and $N_{\psi}$ interpolation nodes in $q$. We give a lower bound on $\Re(\lambda_1)$ of $M_{\lambda}$. See Section \ref{chebyshev} and Tables \ref{tableB}, \ref{tableC}, and \ref{tableD} for interpolation details.
\end{proof}

\begin{table}[!b]
\begin{tabular}{|c|c|c|c|c|c|c|c|c|c|}
\hline
$k_L$&$k_R$&$q_L$&$q_R$&$M_x$&$M_q$&$\rho_q$&$N_x$&$N_q$&$M_{\lambda}$\\
\hline
0.9299 & 0.9422 & 0.122 & 0.139 & 1.53e+77 & 1.3e+18 & 27.6 & 130 & 25 & 4.41e-07 \\
\hline
0.89892 & 0.93008 &   0.1 &   0.2 & 1.24e+85 & 1e+18 & 5.26 & 140 & 51 & 0.342 \\
\hline
0.74993 & 0.90005 &  0.05 &  0.11 & 4.04e+74 & 1.11e+19 & 4.64 & 126 & 56 & 0.708 \\
\hline
0.39992 & 0.75108 &  0.01 &  0.06 & 8.14e+69 & 9.82e+20 & 2.18 & 120 & 118 & 0.399 \\
\hline
0.29993 & 0.40007 & 0.005 & 0.011 & 5.43e+63 & 1.03e+24 & 4.64 & 111 & 64 & 0.211 \\
\hline
0.23991 & 0.30009 & 0.003 & 0.006 & 2.58e+63 & 2.1e+25 & 5.26 & 111 & 61 & 0.14 \\
\hline
0.19991 & 0.24009 & 0.002 & 0.0038 & 2.52e+63 & 2.13e+26 & 5.67 & 111 & 59 & 0.0943 \\
\hline
\end{tabular}
\caption{In this table, entries in the first two columns are rounded to 5 significant digits and entries in the other columns are rounded to 3 significant digits. We verify instability for $k\in[k_L,k_R]$ using interpolating polynomials valid for $q\in[q_L,q_R]$. An interpolation bound in the variable $x$ with $\rho_x = 5.460277197252352$ for $\tilde f_{\psi}$ and $\tilde g_{\psi}$ is given by $M_x$ and in the variable $q$ by $M_q$. The number of interpolation nodes needed in $x$ and $q$ are respectively $N_x$ and $N_q$. The minimum of $\Re(\lambda_1)$ for $k\in[k_L,k_R]$ is bounded below by $M_{\lambda}$. The interpolation error is always less than 1e-16.}
%\label{table:lower-instability } %Table created by create_tableA.m
\label{table:lower-instability}
\end{table}

\section{Instability for periods in the upper instability region}\label{upunstab}

In this section we describe our results showing that the periodic traveling-wave solutions of \eqref{gks},  corresponding to $k\in[0.99999839$,\kUnstableUpperRight], described in Proposition \ref{p:kdvsolnexpand} are spectrally unstable.

\begin{lemma}\label{lemma:upper:instability}
For $k\in[0.99999839,$\kUnstableUpperRight], corresponding to $X\in[X_l,X_r]$ where $X_l\approx 26.06$ and $X_r \approx$\XUnstableUpperRight, $\Re(\lambda_1(i\omega'))>0$ for some value of $\alpha$; hence, the periodic traveling-wave solutions of \eqref{gks} described in Proposition \ref{p:kdvsolnexpand} are spectrally unstable.
\end{lemma}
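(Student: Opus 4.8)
The plan is to mirror, essentially verbatim, the argument proving Lemma~\ref{lemma:lower:instability}, but localized at the degenerate corner $\alpha = i\omega'$ (the $\tilde n = 0$, $\psi = 1$ point) rather than at $\alpha = \omega + i\omega'$. Recall from \eqref{innerprod2} that $\Re(\lambda_1(\alpha)) = \tilde f(\alpha)/\tilde g(\alpha)$ for $\alpha = \tilde n\omega + i\psi\omega'$. By Lemma~\ref{lemma:zeros} both $\tilde f$ and $\tilde g$ vanish at $\alpha = i\omega'$, so fixing $\tilde n = 0$, writing $\alpha = i\psi\omega'$, and Taylor expanding in $\psi$ about $\psi = 1$ gives $\Re(\lambda_1(i\omega')) = \tilde f_\psi(i\omega')/\tilde g_\psi(i\omega')$ provided $\tilde g_\psi(i\omega') \neq 0$. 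Thus the first task is to reduce the lemma to a rigorous verification that $\tilde f_\psi(i\omega')$ and $\tilde g_\psi(i\omega')$ are nonzero and of the same sign for every $k$ in the stated range, so that the quotient is strictly positive.

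Second, I would parametrize by the single variable $q = e^{-\pi\omega'/\omega} = e^{-\pi K(\sqrt{1-k^2})/K(k)}$, holding $\tilde n = 0$ and $\psi = 1$ fixed, and apply the one-dimensional Chebyshev machinery of Section~\ref{chebyshev} to the integrands defining $\tilde f_\psi$ and $\tilde g_\psi$: first interpolate in the spatial variable $x$ on a stadium $E_{\rho_x}$ chosen to avoid the zeros of $\vartheta_1$ and the poles of the Weierstrass $\zeta$, integrate the interpolant term by term as in Section~\ref{evaluate}, then interpolate the resulting functions of $q$ on a stadium $E_{\rho_q}$ avoiding the poles of $\xi$. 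As in Lemma~\ref{lemma:lower:instability}, to keep the node counts feasible I would subdivide the $q$-range into subintervals, recording for each the bounds $M_x$, $M_q$, the radii $\rho_x$, $\rho_q$, the node counts $N_x$, $N_q$, and a lower bound $M_\lambda$ on $\Re(\lambda_1)$, in the format of Table~\ref{table:lower-instability}. On each subinterval, interval arithmetic applied to the interpolant (Taylor expanded in the $\cos^{-1}$ variable as in Section~\ref{evaluate} to keep output intervals tight) verifies $\tilde g_\psi \neq 0$ and $\tilde f_\psi/\tilde g_\psi > 0$; spectral instability for $\delta > 0$ sufficiently small then follows from Proposition~\ref{p:kdvstab}.

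The main obstacle is that the upper instability region is the near-homoclinic limit $k \to 1$, in which $\omega'/\omega = K(\sqrt{1-k^2})/K(k) \to 0$ and hence $q \to 1$. In this regime the $q$-series for $\vartheta_1$ and $\zeta$ converge slowly, the sup-norm bounds $M_x$ on the integrands blow up, and the poles of $\xi(\alpha)$ crowd the domain of analyticity, forcing $\rho_q \to 1$ and driving $N_q$ upward; the bootstrap balance of poor $M$ against fast exponential convergence that makes the rest of the paper feasible becomes delicate here. Overcoming it requires a judicious change of variable near $q = 1$ — for instance interpolating in the complementary nome $\tilde q = e^{-\pi K(k)/K(\sqrt{1-k^2})}$ (which is small in this limit) via the modular transformation, or in $\log q$ — chosen so that the relevant functions are analytic on a stadium of non-degenerate radius. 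A secondary point is that $\tilde g_\psi(i\omega')$ must be shown not to vanish anywhere in the range before the quotient is formed; since this is a one-variable check it is cheap, but it is logically prior to the sign computation.
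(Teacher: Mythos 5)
There is a genuine gap, and it stems from a misreading of the statement (admittedly the statement itself is garbled: ``$\Re(\lambda_1(i\omega'))>0$ for some value of $\alpha$'' only makes sense if read as ``$\Re(\lambda_1(i\psi\omega'))>0$ for some $\psi$''). You took the corner point $\alpha=i\omega'$ (i.e.\ $\psi=1$) literally and proposed to compute the L'H\^{o}pital limit $\tilde f_\psi(i\omega')/\tilde g_\psi(i\omega')$ there, exactly mirroring Lemma~\ref{lemma:lower:instability} at $\alpha=\omega+i\omega'$. But at the upper boundary the instability does \emph{not} manifest at $\psi=1$: Lemma~\ref{lemma:upper:strict1}, applied on $R_6=J\times[0.9,1]$ with $J=[0.9999983,0.99999839]$, shows that $\tilde f_\psi<0$ and $\tilde f>0$ for $\psi<1$ across the transition, and since $\tilde g<0$ for $\psi\in(0,1)$ with $\tilde g(i\omega')=0$ one has $\tilde g_\psi(i\omega')\ge 0$, so the quotient $\tilde f_\psi/\tilde g_\psi$ at $\psi=1$ is \emph{negative} on both sides of the stability boundary. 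Your check would therefore return $\lambda_1(i\omega')<0$ at the left end of the instability range and the proof would stall. The upper and lower boundaries are genuinely asymmetric in this respect: the transition region $R_4$ sits at $\psi\in[0.99,1]$ in the lower case but at $\psi\in[0.7,0.8]$ in the upper case, so the corner-point trick that works on the left has no analogue on the right.

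What the paper actually does is simpler in principle but necessarily two-dimensional: it interpolates $\tilde f$ and $\tilde g$ jointly in $(q,\psi)$ (the 2D machinery of Section~\ref{chebyshev}, with the node counts and stadium radii tabulated in Table~\ref{tableB}), slices the $k$-range into subintervals, and on each slice evaluates $\lambda_1(i\psi\omega')$ at $100$ values of $\psi$ in $[0.6,0.8]$, verifying that at least one of them is strictly positive. No Taylor expansion or L'H\^{o}pital reduction is needed because the test points stay away from the degenerate zeros of $\tilde f$ and $\tilde g$. Your observation that the near-homoclinic limit $q\to 1$ degrades the analytic-interpolation bounds is correct and is a real difficulty; the paper absorbs it by subdividing the $q$-range (the rows of Table~\ref{tableB} show $\rho_q$ shrinking and node counts rising as $q$ increases), not by a modular change of nome, though that is a sensible alternative. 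The logically prior point you raise---that $\tilde g\neq 0$ must be established before forming the quotient---is handled once and for all by the sign result of Section~\ref{gspecial}, which gives $\tilde g(i\psi\omega')\neq 0$ for all $\psi\in(0,1)$; at the interior test points $\psi\in[0.6,0.8]$ that condition is automatic, another advantage over working at $\psi=1$ where $\tilde g$ vanishes.
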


\begin{proof}
For $\alpha = i\psi \omega'$, we interpolate $\tilde f(\alpha)$ and $\tilde g(\alpha)$ given in equation \eqref{innerprod2} in the variables $q(k)$ and $\psi$. We then break the domain into $k$-intervals and use interval arithmetic to evaluate, via the interpolation polynomials, $\lambda_1(i\psi\omega')$ on each $k$ interval with 100 points in $\psi$ between 0.6 and 0.8. We verify that $\lambda_1(i\psi\omega') > 0$ for at least one of these $\psi$ values implying spectral instability. See Table \ref{tableB} and section driver\_instability\_upper of \cite{Bdoc} for details.
\end{proof}

% -----------------------------------------------------------------------------
% Determination of sharp stability transitions
% -----------------------------------------------------------------------------

\section{Determination of sharp stability transitions} \label{strict}

In this section we show that the stability transitions are sharp.

\begin{lemma}\label{transitionlemma}
Suppose that stability is determined by the sign of $h:[k_{\min},k_{\max}]\times [0,1]\to \R$, that is $h(k,\psi)<0$  for all $\psi\in[0,1]$ corresponds to stability at $k$ and $h(k,\psi)> 0$ for some $\psi$ corresponds to instability, where $h$ is analytic in both variables, and suppose that stability holds at one end of the interval $[k_{\max},k_{\min}]$ and not at the other. Further suppose that $h(k,\psi) < 0$ for $(k,\psi)\in [k_{\min},k_{\max}]\times ([0,1]/(\psi_1,\psi_2))$ where $\psi_1<\psi_2$. Then, if (i) $h_k \neq 0$ on $B:= [k_{\min},k_{\max}]\times [\psi_1,\psi_2]$, there is a sharp stability boundary which occurs for  $k_{\min} < k_*< k_{\max}$, (ii) If   $\mathrm{sign}(h_{\psi \psi})  =-\mathrm{sign}(h_k)$ (convexity), then there is a unique transition point $(k_*,\psi_*)$, with
$k_{\min} < k_*< k_{\max}$ and $\psi_1\leq \psi_*\leq \psi_2$, determined by the property $F:= (h,h_{\psi})=(0,0)$.
In this case, for any $(\psi,k)\in B$, we have
the a posteriori estimate
\be
\label{aposteriori}
|(k,\psi)-(k_*,\psi_*)|\leq   \sup_B  |dF^{-1}|  |F(k,\psi)|.
\ee
\end{lemma}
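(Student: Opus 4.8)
The plan is to treat the three assertions separately, in increasing order of difficulty, using essentially the intermediate value theorem for (i), a quantitative inverse function / implicit function argument for (ii), and the mean value inequality together with the hypothesis that $dF$ is boundedly invertible on $B$ for the \emph{a posteriori} estimate \eqref{aposteriori}.

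For assertion (i): by hypothesis, stability holds at one endpoint of $[k_{\min},k_{\max}]$ and fails at the other. Since $h(k,\psi)<0$ for $(k,\psi)\in[k_{\min},k_{\max}]\times([0,1]\setminus(\psi_1,\psi_2))$, stability versus instability at a given $k$ is governed entirely by the sign of $m(k):=\max_{\psi\in[\psi_1,\psi_2]}h(k,\psi)$, which is continuous in $k$ (being a max of a jointly continuous function over a compact set). Stability at $k$ is equivalent to $m(k)<0$ — more precisely, $m(k)\le 0$ with the boundary case handled below — and instability is $m(k)>0$. Since $m$ changes sign over $[k_{\min},k_{\max}]$ it has a zero; let $k_*$ be, say, the infimum of the set of $k$ with $m(k)\ge 0$ (or supremum of the stable set). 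One then argues $k_{\min}<k_*<k_{\max}$ strictly: the strict inequalities at the endpoints (known from the verified interval bounds, e.g. $\Re\lambda_1<0$ strictly on one side and $>0$ strictly on the other, which propagate to a neighborhood by continuity) push the transition into the open interval. Here the hypothesis $h_k\neq 0$ on $B$ is used to rule out an interval of $k$-values on which $m(k)=0$: if $h(k,\psi)=0$ then $\partial_\psi h(k,\psi)=0$ at an interior maximizing $\psi$, and $h_k\ne0$ forces the zero set of $h$ in $B$ to be (locally) a graph $\psi\mapsto k(\psi)$, so $m(k)=0$ can hold for at most one $k$; hence the transition point is unique as a $k$-value, establishing a sharp boundary.

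For assertion (ii): assume in addition $\mathrm{sign}(h_{\psi\psi})=-\mathrm{sign}(h_k)$ on $B$. At the transition value $k_*$ found above, $m(k_*)=0$ is attained at some $\psi_*\in[\psi_1,\psi_2]$; since $h_{\psi\psi}$ has constant sign on $B$, $h(k_*,\cdot)$ is strictly concave (in the case $h_k>0$; strictly convex if $h_k<0$), so the maximizer $\psi_*$ is interior and unique and satisfies $h_\psi(k_*,\psi_*)=0$ together with $h(k_*,\psi_*)=0$, i.e. $F(k_*,\psi_*)=(0,0)$. To see $(k_*,\psi_*)$ is the \emph{only} zero of $F$ in $B$, compute the Jacobian
\[
dF=\begin{pmatrix} h_k & h_\psi\\ h_{\psi k} & h_{\psi\psi}\end{pmatrix},
\]
whose determinant at a zero of $F$ (where $h_\psi=0$) equals $h_k\,h_{\psi\psi}<0$ by the sign hypothesis; thus $dF$ is invertible on the zero set, and — since $dF$ being invertible throughout $B$ (guaranteed by $\sup_B|dF^{-1}|<\infty$, which is the standing hypothesis for \eqref{aposteriori}) makes $F$ locally injective everywhere — a degree/monotonicity argument along the graph $\psi\mapsto k(\psi)$ of the concave-maximizer curve, combined with the sign change of $m$, yields exactly one solution. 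The bounds $\psi_1\le\psi_*\le\psi_2$ are immediate since outside $(\psi_1,\psi_2)$ we have $h<0$, so the max cannot be attained there.

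For the \emph{a posteriori} estimate \eqref{aposteriori}: this is a standard Newton–Kantorovich / mean value inequality. Given any $(k,\psi)\in B$, write $F(k,\psi)-F(k_*,\psi_*)=F(k,\psi)$ since $F(k_*,\psi_*)=0$, and apply the fundamental theorem of calculus along the segment joining the two points (which lies in the convex set $B$):
\[
F(k,\psi)=\left(\int_0^1 dF\bigl((k_*,\psi_*)+t\,((k,\psi)-(k_*,\psi_*))\bigr)\,dt\right)\bigl((k,\psi)-(k_*,\psi_*)\bigr).
\]
Denote the averaged Jacobian by $\bar A$; since every $dF$ along the segment is invertible with inverse bounded by $\sup_B|dF^{-1}|$, and — by shrinking $B$ around $(k_*,\psi_*)$ if necessary, or simply invoking that $\sup_B |dF^{-1}|$ controls the whole block — $\bar A$ is invertible with $|\bar A^{-1}|\le \sup_B|dF^{-1}|$, we get $|(k,\psi)-(k_*,\psi_*)|=|\bar A^{-1}F(k,\psi)|\le \sup_B|dF^{-1}|\,|F(k,\psi)|$, which is \eqref{aposteriori}. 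The main obstacle I anticipate is the bookkeeping in (ii): carefully arguing global uniqueness of the zero of $F$ on all of $B$ (not just local uniqueness near $(k_*,\psi_*)$) from the one-sided convexity hypothesis and the endpoint sign change, rather than merely existence; everything else is soft topology and calculus, and in the paper's setting these hypotheses are anyway verified by interval arithmetic on the explicit $h$ built from $\lambda_1$.
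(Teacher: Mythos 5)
Your proposal takes essentially the same route as the paper's proof: the intermediate value theorem together with the fixed sign of $h_k$ on $B$ for part (i) (the paper tracks the zero set $H$ of $h$ and sets $k_*=\inf\{k:(k,\psi)\in H\}$ where you track $m(k)=\max_\psi h(k,\psi)$, but the idea is the same), the concavity hypothesis for uniqueness of $(k_*,\psi_*)$ in (ii) (the paper argues by contradiction via the midpoint of two hypothetical zeros of $h(k_*,\cdot)$), and a mean-value/Taylor representation of $F$ about $(k_*,\psi_*)$ for the \emph{a posteriori} bound \eqref{aposteriori}. The gap you flag in bounding the inverse of the averaged Jacobian by $\sup_B|dF^{-1}|$ is present in the paper as well, which writes a single mean-value intermediate point for the vector-valued $F$ and passes directly to the estimate without addressing it.
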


\begin{proof}
(i) Define the set $H:= \{(k,\psi) \in [k_{\min},k_{\max}]\times [0,1]| h(k,\psi) = 0\}$. Since $h$ is continuous and stability holds (without loss of generality) at $k_{\min}$ and does not hold at $k_{\max}$, $H$ is nonempty by the Intermediate Value Theorem. Define $k_* = \inf k:(k,\psi)\in H$. Since $h$ is continuous, $H$ is closed and so there exists $\psi_*$ such that $h(k_*,\psi_*) = 0$. By the Mean Value Theorem, $h(k,\psi_*) > 0$ for $k>k_*$ implying instability, and by definition of $k_*$, stability holds for $k<k_*$. By continuity of $h$, $k_{\min}<k_*<k_{\max}$. (ii) Suppose without loss of generality that stability holds at $k_{\min}$ so that $h_k > 0$ in $B$ and let $k_*$ be as in (i) . Suppose that $h(k_*,\psi_1) = h(k_*,\psi_2) = 0$. By convexity in $\psi$, $h(k_*,(\psi_1+\psi_2)/2) > 0$. But $h(k_{\min},(\psi_1+\psi_2)/2) < 0$ so by the Intermediate Value Theorem, there is a $k_0< k_*$ such that $h(k_0,(\psi_1+\psi_2)/2) = 0$. This contradicts the definition of $k_*$. Taylor expanding about $(k_*,\psi_*)$ we have $F(k,\psi) = F_k(k_0,\psi_0)(k-k_0)+F_{\psi}(k_0,\psi_0)(\psi-\psi_0)$ for some $(k_0,\psi_0)$. Then $|(k,\psi)-(k_*,\psi_*)|\leq   \sup_B  |dF^{-1}|  |F(k,\psi)|$.
\end{proof}

\begin{remark}
In principle, one may approximate $(k_*,\psi_*)$ to any given precision via Newton's method with multiple precision arithmetic and the evaluation of $F$ in the error bound with multiple precision interval arithmetic, but we do not do so here.  
%TODO: Either carry this out or remark that we choose not to. 
\end{remark}

\subsection{Sharp transition at the lower stability boundary}

In this section we verify that the transition from instability to stability is strict at the lower stability boundary. Define $J:= [0.942197747747748,0.9426]$ and the regions $R_1 := J\times[0,(1+\cos(9\pi/10))/2]$, $R_2 := J\times [(1+\cos(9\pi/10))/2,(1+\cos(\pi/5))/2]$, $R_3 := J\times[(1+\cos(\pi/5))/2,0.99]$, and $R_4:= J\times[0.99,1]$. 

\begin{lemma}\label{lemma:lower:strict1}
For $(k,\psi) \in R_4$ and $\alpha = \omega+i\psi\omega'$, $\pd{}{k}\lambda_1(k,\psi) < 0$.
\end{lemma}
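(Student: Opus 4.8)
The plan is to establish Lemma \ref{lemma:lower:strict1} by a direct interval-arithmetic computation, in the same spirit as the numerical verifications carried out earlier in Section \ref{showS1} and Section \ref{stabmid}. Recall from \eqref{innerprod2} that on the branch $\alpha = \omega + i\psi\omega'$ we have $\lambda_1 = \tilde f(\alpha)/\tilde g(\alpha)$, with $\tilde f$ and $\tilde g$ built from the integrands in \eqref{factored}. Since $R_4 = J\times[0.99,1]$ is the region adjacent to $\psi = 1$, where $\tilde f(\omega+i\omega') = \tilde g(\omega+i\omega') = 0$ by Lemma \ref{lemma:zeros}, the quotient $\tilde f/\tilde g$ is a $0/0$ form at the corner $\psi = 1$; following the device used in Lemma \ref{lemma:lower:instability}, I would instead work with the representation $\lambda_1 = \tilde f_\psi/\tilde g_\psi$ near $\psi = 1$ (valid as long as $\tilde g_\psi \neq 0$), or more robustly write $\tilde f = (1-\psi)\hat f$, $\tilde g = (1-\psi)\hat g$ with $\hat f,\hat g$ analytic and nonvanishing on $R_4$, so that $\lambda_1 = \hat f/\hat g$ and $\partial_k\lambda_1 = (\hat f_k\hat g - \hat f\hat g_k)/\hat g^2$.

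The computation then proceeds by two-variable analytic interpolation in $(q,\psi)$, exactly as described in Sections \ref{chebyshev} and \ref{stabmid}: reparametrize $k\in J$ by $q(k) = e^{-\pi K(\sqrt{1-k^2})/K(k)}$, choose stadium radii $\rho_x$, $\rho_q$, $\rho_\psi$ avoiding the zeros of $\vt_1$ and the poles of $\xi$ (the same analyticity region analysis as in the proof of Lemma \ref{p:stab} applies, since $J$ is a small subinterval of $[0.942,0.9999984]$), obtain crude sup-norm bounds $M_x$, $M_q$, $M_\psi$ on the relevant integrands and their $k$- and $\psi$-derivatives via the $q$-series for $\vt_1$ and $\wp,\zeta$, and use \eqref{interp-bound2} to fix the number of nodes $N_x$, $N_q$, $N_\psi$ guaranteeing a small interpolation error. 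With the interpolants and their rigorous error bars in hand, evaluate $\partial_k\lambda_1(k,\psi) = (\hat f_k\hat g - \hat f\hat g_k)/\hat g^2$ on a grid of subintervals covering $R_4$ — subdividing $J$ and $[0.99,1]$ finely enough — using the Taylor-expansion trick of Section \ref{evaluate} to keep the output intervals narrow, and verify that the enclosing interval lies strictly below $0$ on every subrectangle. Since $\partial_k\lambda_1$ is continuous (indeed analytic) on $R_4$, verifying strict negativity on a covering by closed subrectangles establishes $\partial_k\lambda_1 < 0$ throughout.

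The main obstacle I anticipate is controlling the $0/0$ behaviour at the corner $\psi = 1$ while keeping interval widths manageable: the functions $\tilde f$, $\tilde g$ and their $k$-derivatives have very large sup-norm bounds (of order $10^{30}$ or worse, as Tables \ref{tableB}--\ref{tableD} show), so the cancellation producing the small nonzero value of $\partial_k\lambda_1$ must be carried out in a form where the common factor $(1-\psi)$ is extracted symbolically rather than left to numerical cancellation. Concretely, one would want to differentiate the explicit $\vt_1$- and $\wp$-expressions for the integrands with respect to $k$ (through $q$ and through $\omega$, $\omega'$, $\xi$) before interpolating, and to interpolate $\hat f = \tilde f/(1-\psi)$ directly, so that no spurious blow-up enters. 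Once this reformulation is in place, the remaining work — bounding, choosing nodes, gridding $R_4$, and checking the sign — is routine interval arithmetic of the type already validated in the preceding sections, and the details are recorded in the accompanying source at \cite{Bdoc}.
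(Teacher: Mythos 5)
Your proposal is essentially the paper's argument: the paper likewise recognises the $0/0$ degeneracy at the corner $\psi=1$ (where $\tilde f=\tilde g=0$ by Lemma~\ref{lemma:zeros}) and, in effect, extracts the factor $(\psi-1)$ before doing interval arithmetic, then combines this with Chebyshev interpolation in $q$, Cauchy-integral derivative bounds, and a subdivision of $J$ into a handful of subintervals. The only implementation nuance worth noting is that the paper realises the ``factor out $(1-\psi)$'' step not by interpolating $\hat f = \tilde f/(1-\psi)$ as a function on $R_4$, but by first-order Taylor expanding $\tilde f,\tilde g,\tilde f_k,\tilde g_k$ about $\psi=1$ with integral remainders, uniformly bounding $f_{\psi\psi},g_{\psi\psi},f_{k\psi\psi},g_{k\psi\psi}$ on $R_4$, and thereby writing $\partial_k\lambda_1$ as a rational expression in quantities evaluated only at $\psi=1$ plus small remainder intervals of width $O(1-\psi)$ — so no gridding of $R_4$ in the $\psi$-direction is needed, and the $\psi$-dependence over $[0.99,1]$ is absorbed entirely into the Taylor remainder interval $z$.
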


\begin{proof}
Recall that $\lambda_1 =\tilde f/\tilde g$ where $\tilde f$ and $\tilde g$ are defined in equation \eqref{lemma:zeros}. Dropping the tilde notation, $\pd{}{k}\lambda_1 = f_k/g - (f/g)(g_k/g)$. When $\psi = 1$, $f(k,\psi) = g(k,\psi) = 0$ for all $k\in(0,1)$ by \ref{lemma:zeros}, so that Taylor expanding $f$ about $\psi_0 = 1$ and simplifying yields $f(k_0,\psi) = f_{\psi}(k_0,1)(\psi-1) -\int_{\psi}^1f_{\psi\psi}(k_0,t)(\psi-1)dt$. We similarly Taylor expand $g$, $\pd{}{k}f$, and $\pd{}{k}g$. Let $M_1$ such that $|f_{\psi\psi}(k_0,\psi)|\leq M_1$ for $(k,\psi)\in R_{4}$ and similarly let $M_2$, $M_3$, and $M_4$ be bounds on $g_{\psi\psi}$, $f_{k\psi\psi}$, and $g_{k\psi\psi}$ respectively. Note that $|\int_{\psi}^1f_{\psi\psi}(k_0,t)(\psi-t)dt|\leq M_1(\psi-1)^2/2$. Defining the interval $z = (1/2)[-0.01,0.01]$, we have upon substitution and simplification that
\begin{equation}
\pd{}{k}\lambda_1(k_0,\psi) \in \frac{f_{\psi k}(k_0,1)+M_3z}{g_{\psi}(k_0,1)+M_2z} -\left(\frac{f_{\psi}(k_0,1)+M_1z}{g_{\psi}(k_0,1)+M_2z}\right)\left(\frac{g_{k\psi}(k_0,1)+M_4z}{g_{\psi}(k_0,1)+M_2z} \right).
\label{eq:lower:strict}
\end{equation}
 Recall that $ f = f_1+f_2/\omega^2$. We approximate $f_1$, $f_2$, $g$ and their first two derivatives in $\psi$ using Chebyshev interpolation in the variables $\tilde q\in[-1,1]$ and $\tilde \psi\in[-1,1]$ where $q = 5/2+3\tilde q/2$, $q = e^{-\pi K(\sqrt{1-k^2})/K(k)}$, and $\psi= (1+\tilde \psi)/2$; see Section \ref{chebyshev} for interpolation details. We take $\psi \in [0,1]$, $q\in[0.1,0.4]$, $\rho_{\psi} \approx 2.81$ and $\rho_q \approx 2.733$. We found function bounds of $M_q \approx 2.99\times 10^{37}$ for $q\in E_{q}$, $M_{\psi} \approx 1.08\times 10^{35}$ for $\psi \in E_{\psi}$, and $M_x \approx 3.12\times 10^{38}$ for $x\in E_{x}$. We use $N_q = 205$ points to interpolate in the variable $q$, $N_{\psi} = 193$ nodes in $\psi$, and $N_x = 521$ nodes in $x$. The associated one dimensional approximation error for each of the variables is respectively $err_q \approx 3.81 \times 10^{-50}$, $err_{\psi} \approx 9.62 \times 10^{-50}$, and $err_x \approx 9.78\times 10^{-18}$. We intentionally make the interpolation error in the variables $\tilde q$ and $\tilde \psi$ excessively small to provide good error bounds when approximating the derivative of the function with the derivative of the interpolant. Now $\pd{}{k}g =( \pd{}{k} q )(\pd{}{q}\tilde q)( \pd{}{\tilde q}g)$. To approximate $\pd{}{\tilde q}g(\tilde q, \tilde \psi)$, we use $\pd{}{\tilde q} p(\tilde q,\tilde \psi)$ where $p$ is the Chebyshev interpolant. To bound the approximation error, we need a bound on $\pd{}{\tilde q}g$ on a stadium, see \ref{dererror}. We take $\rho_{\check q} \approx 1.75$ and use Cauchy's integral formula to determine that $|\pd{}{\tilde q}g| \leq M \approx 10^{39}$ for $q\in E_{\check q}$. Putting these together, the error of approximating $\pd{}{\tilde q}g$ with $N_q = 205$ interpolation nodes is bounded by $err_{\check q} \approx 10^{-9}$. With these error estimates in place, we are able to evaluate \eqref{eq:lower:strict}. We break $J$ up into 5 convenient sub-intervals to verify $\pd{}{k}\lambda_1(k,\psi) < 0$. For example,   $\pd{}{k}\lambda_1(k,\psi)\in[r_-,r_+]$ where $r_{\pm} = -34.27388757999935\pm 13.49688667120629$ when we use the sub-interval $[0.9425,0.9426]$. 
\end{proof}

The computations in \eqref{eq:lower:strict} are of size: $f_{\psi,k}=O(10^4)$, $f_{\psi}=O(10^3)$, $g_{\psi}=O(10^2)$, $g_{\psi k} = O(10^3)$, $f_{\psi\psi k} = 0(10^6)$, $g_{\psi\psi k} = O(10^5)$, $f_{\psi\psi} = O(10^3)$, and $g_{\psi\psi} = O(10^4)$. This indicates the difficulty of the computation since the interval in $\psi$ must be taken sufficiently small, about $10^{-2}$ in width, to avoid loss of information.

\begin{lemma} \label{lemma:lower:strict2}
When $\alpha = \omega + i\psi \omega'$, $\tilde f > 0$ for $(k,\psi)\in R_{1}\cup R_{2}$ and $\pd{}{\psi} \tilde f > 0$ for $(k,\psi)\in R_{3}$. In addition, $\tilde g< 0$ for $(k,\psi)\in J\times [0,1]$.
\end{lemma}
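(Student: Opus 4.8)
The plan is to establish all four sign statements by the same rigorous numerical scheme used in Lemmas~\ref{p:stab} and~\ref{lemma:middle:stability}, now specialized to the branch $\alpha=\omega+i\psi\omega'$ and to the narrow window $J$. Throughout I would work with the reformulation \eqref{innerprod2}, writing $\tilde f=\tilde f_1+\tilde f_2/\omega^2$ and treating $\tilde f_1$, $\tilde f_2$, $\tilde g$ as $x$-integrals of the analytic integrands assembled in \eqref{factored}. Recall from Lemma~\ref{lemma:zeros} that $\tilde f$ and $\tilde g$ vanish exactly at $\alpha=\omega$ (that is, $\psi=0$) and at $\alpha=\omega+i\omega'$ (that is, $\psi=1$); accordingly the inequalities $\tilde f>0$ on $R_1$ and $\tilde g<0$ on $J\times[0,1]$ are to be read as strict on the interior, with the sign on a neighborhood of those two degenerate values supplied by a first-order Taylor expansion exactly as in the proof of Lemma~\ref{p:stab}, where $\partial_\psi\tilde g<0$ near $\psi=0$, $\partial_\psi\tilde g>0$ near $\psi=1$, and $\partial_\psi\tilde f>0$ near $\psi=0$ were verified.

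The first step is to set up the three-variable analytic interpolation in $x$ (afterwards integrated out via $\int_{-1}^1 T_n(x)\,dx=(1+(-1)^n)/(1-n^2)$), in $q=e^{-\pi K(\sqrt{1-k^2})/K(k)}$ used as a proxy for $k$, and in $\psi$. For each variable I would choose a Bernstein radius $\rho_x,\rho_q,\rho_\psi>1$ small enough to avoid the zeros of $\vartheta_1(\pi(x+i\omega')/2\omega)$ and the half-period poles of $\xi$, exactly as in the analyticity discussion in the proof of Lemma~\ref{p:stab}; compute crude interval upper bounds $M_x,M_q,M_\psi$ for the moduli of the relevant integrands on the corresponding stadia from the $q$-series for $\vartheta_1$ and $\zeta$; and read off from \eqref{interp-bound2} the node counts $N_x,N_q,N_\psi$, of the type recorded in Table~\ref{tableC}. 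Because $\partial_\psi\tilde f$ is needed on $R_3$ (and $\partial_\psi\tilde g$ near $\psi\in\{0,1\}$), the $\psi$-interpolation error must be driven far below machine precision so that the derivative-of-interpolant bound \eqref{eq:derbounds}, equivalently a Cauchy estimate on a slightly smaller stadium (Section~\ref{dererror}), stays usable. The interpolants are then evaluated in interval arithmetic by Taylor-expanding in the angular variables $\theta,\nu$ with $x=\cos\theta$, following Section~\ref{evaluate}, to keep the output intervals narrow despite polynomial degrees in the hundreds; and $1/\omega^2=\kappa(k)^2/\pi^2$ is enclosed tightly on each $q$-subinterval using the monotonicity corollary following Lemma~\ref{lemma:kappa-monotone}.

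With this machinery in place I would carry out the four verifications, subdividing $J$ (equivalently the $q$-window) into finitely many pieces for the interval-evaluation step so as to control the widths. \textbf{(a)} On the part of $R_1\cup R_2$ bounded away from $\psi=0$, evaluate the interpolant of $\tilde f$ and check $\tilde f\ge c>0$; on the thin strip adjacent to $\psi=0$, use $\tilde f|_{\psi=0}=0$ together with a verified $\partial_\psi\tilde f>0$ to conclude $\tilde f>0$ for $\psi>0$. \textbf{(b)} On $R_3$, evaluate the interpolant of $\partial_\psi\tilde f$ and check it is $>0$ throughout; with (a) at the shared endpoint $(1+\cos(\pi/5))/2$ this also gives $\tilde f>0$ on $R_3$. \textbf{(c)} On $J\times[0,1]$, evaluate the interpolant of $\tilde g$ and check $\tilde g\le-c<0$ away from $\psi\in\{0,1\}$, and near $\psi=0$ (resp.\ near $\psi=1$) use $\tilde g|_{\psi=0}=0$ with a verified $\partial_\psi\tilde g<0$ there (resp.\ $\tilde g|_{\psi=1}=0$ with $\partial_\psi\tilde g>0$ there) to conclude $\tilde g<0$ on the interior.

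The main obstacle, as elsewhere in the paper, is the interplay between the astronomically large sup-norm bounds $M_x,M_q,M_\psi$ (order $10^{29}$--$10^{30}$ and worse on the stadia) and the requirement of genuinely tight interval outputs: exponential convergence of analytic interpolation keeps the node counts finite, but the interpolating polynomials then have degree in the hundreds, so Clenshaw evaluation is unusable in interval arithmetic and the Taylor-in-$\theta$ device together with the subdivision of $J$ becomes indispensable. The genuinely delicate point is the behavior near $\psi=1$, the incipient lower stability transition, where $\tilde f$ and $\tilde g$ are simultaneously small; there one must work on $\psi$-intervals of width on the order of $10^{-2}$ and argue with the first $\psi$-derivatives rather than the functions themselves, which is precisely why the region $R_4$ near $\psi=1$ is handled separately in Lemma~\ref{lemma:lower:strict1} and why only $\partial_\psi\tilde f$, not $\tilde f$ directly, is asserted on $R_3$.
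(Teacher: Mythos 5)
Your proposal is correct and follows essentially the same route as the paper: analytic interpolation in $x$, $q$, and $\psi$ with interval-arithmetic evaluation on subdivisions of $J$, direct sign checks on the interior regions, and derivative arguments combined with the vanishing $\tilde f(k,0)=\tilde g(k,0)=\tilde g(k,1)=0$ from Lemma~\ref{lemma:zeros} near the degenerate endpoints. The only immaterial deviation is in the handling of $\tilde g$ on $R_3$: the paper verifies $\partial_\psi\tilde g>0$ on all of $R_3\cup R_4$ and pushes down from $\tilde g(k,1)=0$, whereas you propose a direct check $\tilde g\le -c<0$ on the middle and reserve the derivative argument for a thinner neighborhood of $\psi=1$; both close the argument, though the paper's choice is numerically preferable since $\tilde g$ is already decaying toward zero throughout $R_3$.
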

 
\begin{proof}
Recall that $\lambda_1 =\tilde f/\tilde g$ where $\tilde f$ and $\tilde g$ are defined in equation \eqref{innerprod2}. We use analytic interpolation with error bounds on 5 sub-intervals of $J$ to approximate $\tilde f$, $\tilde g$, and their derivatives with respect to $\psi$. The interpolation details are the same as those described in Section \ref{stabmid}. We verify with interval arithmetic via evaluation of the Chebyshev interpolants that $f_{\psi} > 0$ for $(k,\psi)\in R_{1}$ and that $\tilde f > 0$ for $(k,\psi)\in R_{2}$. By Lemma \ref{lemma:zeros}, $\tilde f(k,0) = 0$; hence  $f(k,\psi) > 0$ for $(k,\psi)\in R_{1}$. We verify that $\pd{}{\psi} \tilde f > 0$ for $(k,\psi)\in R_{3}$. For $(k,\psi)\in R_{1}$, we verify that $g_{\psi} < 0$, for $(k,\psi)\in R_{2}$ we verify $\tilde g < 0$, and for $(k,\psi)\in R_{3}\cup R_{4}$ we verify that $\tilde g_{\psi} > 0$. This together with Lemma \ref{lemma:zeros} verifies that $\tilde g < 0$ on the indicated region. 
\end{proof}

We are now ready to show strict transition of stability at the lower stability boundary.

\begin{lemma} \label{lemma:lower:strict3}
There exists a unique $k_*\in J$ where $J:= [0.942197747747748,0.9426]$ such that the stability condition (S1) holds for $\{k>k_*\}\cap J$ and (S1) does not hold for $\{k<k_*\}\cap J$. \end{lemma}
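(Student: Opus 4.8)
The plan is to deduce the statement from the abstract transition lemma, Lemma~\ref{transitionlemma}(i), applied to the single analytic function $h(k,\psi):=\Re\lambda_1(\omega+i\psi\omega')$ on $J\times[0,1]$. The first step is a reduction: I would record that the second Floquet branch $\alpha=i\psi\omega'$ remains stable throughout $J$, i.e. $\Re\lambda_1(i\psi\omega')<0$ for all $\psi\in[0,1]$ and all $k\in J$, which is the same analytic-interpolation computation performed for the middle interval in Lemma~\ref{lemma:middle:stability} (with bounds as in Table~\ref{tableB}), rerun on the subintervals of $J$. Granting this, for $k\in J$ the condition (S1) is equivalent to $h(k,\psi)<0$ for every $\psi\in[0,1]$, so it suffices to exhibit a sharp sign-transition of $h$ in the variable $k$.

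Next I would check the hypotheses of Lemma~\ref{transitionlemma}(i). Analyticity of $h$ on a neighborhood of $J\times[0,1]$ holds because, by Lemma~\ref{lemma:zeros}, $\tilde f$ and $\tilde g$ both vanish at $\psi\in\{0,1\}$, while Lemma~\ref{lemma:lower:strict2} and its proof give $\tilde g_\psi<0$ at $\psi=0$, $\tilde g_\psi>0$ at $\psi=1$, and $\tilde g\ne 0$ for $\psi\in(0,1)$; thus $\psi=0,1$ are simple zeros of $\tilde g$ and the quotient $h=\tilde f/\tilde g$ extends analytically across them, in agreement with the Taylor-expanded expression for $\lambda_1$ used at $\psi=1$ in Lemma~\ref{lemma:lower:strict1}. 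For the band hypothesis I take $\psi_1=0.99$ and $\psi_2=1$, with the transition band abutting the endpoint $\psi=1$, so that $h$ must be uniformly negative only on $J\times[0,0.99]$: Lemma~\ref{lemma:lower:strict2} gives $\tilde f>0$ on $R_1\cup R_2$ and $\tilde f_\psi>0$ on $R_3$, which together with $\tilde f(k,0)=0$ and $\tilde f_\psi(k,0)>0$ yields $\tilde f>0$ on $J\times(0,0.99]$ and $h(k,0)<0$; since $\tilde g<0$ on $J\times[0,1)$, this makes $h<0$ on $J\times[0,0.99]$ uniformly in $k$. On $B:=J\times[0.99,1]=R_4$, Lemma~\ref{lemma:lower:strict1} gives $h_k=\partial_k\Re\lambda_1<0$, so in particular $h_k\ne 0$ on $B$. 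Finally, $h(0.9426,\cdot)<0$ by stability at the right endpoint of $J$ (Lemma~\ref{lemma:middle:stability}), while $h(0.942197747747748,1)=\Re\lambda_1(\omega+i\omega')>0$ by instability at the left endpoint of $J$ (Lemma~\ref{lemma:lower:instability}).

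With all hypotheses in place, Lemma~\ref{transitionlemma}(i) supplies a value $k_*\in(0.942197747747748,0.9426)$ such that $h(k,\cdot)<0$ on $[0,1]$ for $k>k_*$ and $h(k,\psi)>0$ for some $\psi$ when $k<k_*$; since, as in that proof, the zero set of $h$ lies in $B$ where $h_k$ keeps a single sign, $k_*$ is the unique dividing value. Combined with the stability of the $\alpha=i\psi\omega'$ branch on $J$ established in the first step, this says exactly that (S1) holds on $\{k>k_*\}\cap J$ and fails on $\{k<k_*\}\cap J$, which is the claim.

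The part requiring care—though not a genuine obstacle, since all the needed estimates are already in hand—is the bookkeeping at $\psi=1$: this is the boundary $\psi$-value where the transition actually occurs and where the raw ratio $\tilde f/\tilde g$ is $0/0$, so one must consistently use the analytic continuation of $\lambda_1$ (equivalently the Taylor-expanded form of Lemma~\ref{lemma:lower:strict1}) and let the $\psi$-band of Lemma~\ref{transitionlemma}(i) abut $\psi=1$ rather than sit in the interior. One must also track that the sign of $h_k$ on $B$ is the one compatible with passing from instability at the left end of $J$ to stability at the right end, so that the conclusion comes out with stability on the upper side $\{k>k_*\}$. The only genuinely new computation, verifying that the $\alpha=i\psi\omega'$ branch stays stable on all of $J$, is routine given the interpolation framework of Section~\ref{stabmid}.
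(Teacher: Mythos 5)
Your proof follows essentially the same route as the paper's: verify the $\alpha=i\psi\omega'$ branch stays stable across $J$, reduce to the $\alpha=\omega+i\psi\omega'$ branch, confine the transition band to $R_4=J\times[0.99,1]$ where Lemma~\ref{lemma:lower:strict1} gives $\partial_k h<0$, and invoke Lemma~\ref{transitionlemma}(i) using instability at the left endpoint (Lemma~\ref{lemma:lower:instability}) and stability at the right endpoint (Lemma~\ref{lemma:middle:stability}). The one place you genuinely diverge from the paper is how you argue that the zero set of $h$ is confined to $R_4$, and this deserves a caution. You take Lemma~\ref{lemma:lower:strict2} at face value, i.e.\ $\partial_\psi\tilde f>0$ on $R_3$, integrate from the left boundary of $R_3$ (where $\tilde f>0$ is inherited from $R_2$), and conclude $\tilde f>0$ on $R_3$, hence $h<0$ there. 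But the paper's own proof of the present lemma cites ``$\partial_\psi\tilde f<0$ in $R_3$'' --- the opposite sign to the one stated in Lemma~\ref{lemma:lower:strict2} --- and argues instead by contradiction: if $\tilde f<0$ at some $(k_0,\psi_0)\in R_3$ then monotone decrease in $\psi$ forces $\tilde f(k_0,0.99)\le 0$, and then stability at $k=0.9426$ together with $\partial_k h<0$ on $R_4$ gives the contradiction. The single-wave data in Lemma~\ref{p:stab} (where $\partial_\psi\tilde f<0$ for $\psi\in[0.9,1]$ at $k=0.99$) and Figure~\ref{fig96}(b) both indicate the sign of $\partial_\psi\tilde f$ on $R_3\approx J\times[0.905,0.99]$ is in fact negative, so the $>0$ in Lemma~\ref{lemma:lower:strict2} appears to be a typo. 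If that is so, your direct integration from the left boundary is insufficient --- $\tilde f$ could drop below $0$ in $R_3$ without violating what you have quoted --- and you would need to replace this step by the paper's contradiction argument or an equivalent. Apart from this one step, the two proofs match, including your remarks about the $0/0$ form at $\psi=1$ and the fact that the $\alpha=i\psi\omega'$ verification must be extended from $[0.9426,\dots]$ to all of $J$ (which Lemma~\ref{lemma:middle:stability} as stated does not literally cover, though the interpolation framework of Section~\ref{stabmid} handles it as you note).
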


\begin{proof}
In Section \ref{stabmid}, we showed that $\lambda_1 < 0$ for $k\in J$ and $\psi \in [0,1]$ in the case $\alpha = i\psi \omega'$. Consider the case $\alpha = \omega+i\psi \omega'$. By Lemma \ref{lemma:lower:strict2}, $\lambda_{1} < 0$ for $(k,\psi)\in R_{1}\cup R_{2}$. From Lemma \ref{lemma:lower:strict2}, we also have that $\lambda_{1} >0$ on $R_{3}\cup R_{4}$ if and only if $\tilde f <0$. Suppose $\tilde f < 0$ for some $(k_{0},\psi_{0}) \in R_{3}$. By Lemma \ref{lemma:lower:strict2}, $\pd{}{\psi} \tilde f<0$ in $R_{3}$ which implies $\tilde f(k_{0},0.99)\leq 0$. We verified (S1), which depends on the sign of $\tilde f$, holds for $k =0.9426$ in Section \ref{stabmid}, so by Lemma \ref{lemma:lower:strict1}, $\tilde f(k_{0},0.99)\leq 0$ implies there is a $k_{1}\in J$ with $k_{1}\geq k_{0}$ such that $\tilde f(k_{1},0.99)= 0$. Thus, condition (S1) holds for $k\in J$ if and only if $\lambda_{1}<0$ for $(k,\psi)\in R_{4}$. Since condition (S1) does not hold for $k =0.942197747747748$ and does hold for $k =0.9426$ as verified in Sections \ref{lowunstab} and \ref{stabmid} respectively, we have by lemma \ref{lemma:lower:strict1} and the application of \ref{transitionlemma} to $\lambda_1$ that the stability transition is strict. 
\end{proof}
% TODO: This isn't quite correct as stated using tansitionlemma. 

% 
% %  Sharp transition at the upper stability boundary
%

\subsection{Sharp transition at the upper stability boundary}

In this section we show that the transition from stability to instability is sharp at the upper stability boundary. Define $J:= [0.9999983,0.99999839]$, $R_1:= J\times [0,0.001]$, $R_2:= J\times [0.001,0.5]$, $R_3:= J\times [0.5,0.7]$, $R_4:= J\times [0.7,0.8]$, $R_5:= J\times [0.8,0.9]$, and $R_6:= J\times [0.9,1]$.  

\begin{lemma}\label{lemma:upper:strict1}
For $\alpha =  i\psi \omega'$ and $(k,\psi)\in R_1\cup R_2\cup R_3 \cup R_5$, $\tilde f > 0$. For $(k,\psi)\in R_6$, $\tilde f_{\psi} < 0$ and $\tilde f > 0$ for $\psi < 1$, and for $(k,\psi)\in R_4$, $\tilde f_k < 0$ and $\tilde f_{\psi\psi} > 0$. 
\end{lemma}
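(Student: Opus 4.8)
The plan is to establish each claimed sign statement about $\tilde f$, $\tilde f_\psi$, $\tilde f_k$, and $\tilde f_{\psi\psi}$ on the six rectangles $R_1,\dots,R_6$ by the same analytic-interpolation-plus-interval-arithmetic machinery already used in Sections \ref{stabsingle} and \ref{stabmid}, with the factored-in-$\xi$ representation of $\tilde f$ from \eqref{factored} playing the central role near $\psi=0$. First I would set up interpolation in the two variables $q = e^{-\pi K(\sqrt{1-k^2})/K(k)}$ and $\psi$ on the narrow $k$-interval $J = [0.9999983,0.99999839]$; because $J$ is so close to $k=1$, the nome $q$ is close to $1$ and one must, as in Computational Detail 3, choose $\rho_q$ and $\rho_\psi$ carefully to avoid the poles of $\xi(\alpha)$ (at $z = 2m\omega + 2n\omega'$) and the zeros of $\vartheta_1$, subdividing $J$ into sub-intervals in $q$ if needed to keep the node counts $N_x,N_q,N_\psi$ manageable, exactly as recorded in Table \ref{tableB}. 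The bounds $M_x,M_q,M_\psi$ on the relevant integrands and their $\psi$- and $k$-derivatives are obtained from the $q$-series for $\vartheta_1$ and $\zeta$ together with the lower bound on $|\vartheta_1(\pi(x+i\omega')/2\omega)|$ as in Lemma \ref{p:stab}, and the interpolation error is controlled by \eqref{interp-bound2} (and \eqref{eq:derbounds} for the derivative statements).

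Next I would handle the four regimes in $\psi$ separately. On $R_1 = J\times[0,0.001]$, where $\xi(i\psi\omega')\to\infty$ as $\psi\to0$, I cannot interpolate $\tilde f$ directly; instead I interpolate the coefficient functions $c_j(x),h_j(x)$ of \eqref{factored}, assemble $\tilde f = \sum_{k=0}^5 p_k(\psi,q)(i\xi)^k$, and — just as in the proof of Lemma \ref{p:stab} — bound the coefficients $|p_k|$ with interval arithmetic, apply the polynomial root bound $R = 1 + (1/|a_n|)\max_{j<n}|a_j|$ to show that $\tilde f(i\psi\omega')=0$ on $R_1$ would force $|\xi|$ below some explicit threshold, and then use Lemma \ref{lemma:properties}(5) (monotonicity of $\xi(i\beta)$) together with an interval evaluation of $\xi(i\cdot 0.001\,\omega')$ to exclude that, giving $\tilde f>0$ on $R_1$. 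On $R_2 = J\times[0.001,0.5]$, $\tilde f$ stays large but I still use the factored form (as in Table \ref{tableD}) to evaluate it and verify $\tilde f>0$ pointwise on a fine grid in $\psi$ with rigorous interval enclosures, Taylor-expanding the Chebyshev interpolant in $\theta,\nu$ per Section \ref{evaluate} to keep interval widths small. On $R_3 = J\times[0.5,0.7]$ and $R_5 = J\times[0.8,0.9]$, $\xi$ is bounded away from its singularity so I interpolate $\tilde f$ itself and verify $\tilde f>0$ directly. On $R_6 = J\times[0.9,1]$, I verify $\tilde f_\psi<0$ by interpolating $\tilde f_\psi$ (using the derivative-of-interpolant bounds \eqref{eq:derbounds} with a slightly smaller stadium) and checking negativity with interval arithmetic; combined with $\tilde f(i\omega')=0$ from Lemma \ref{lemma:zeros} this gives $\tilde f>0$ for $\psi<1$ on $R_6$.

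Finally, on $R_4 = J\times[0.7,0.8]$ I need the two-sided statement $\tilde f_k<0$ and $\tilde f_{\psi\psi}>0$, which is the analytic input for the convexity/monotonicity hypotheses of Lemma \ref{transitionlemma}(ii) at the upper boundary. Here I follow the template of Lemma \ref{lemma:lower:strict1}: write $\tilde f_k = (\partial_k q)(\partial_q \tilde q)(\partial_{\tilde q}\tilde f)$ and $\tilde f_{\psi\psi} = \tfrac14 \partial_{\tilde\psi}^2 \tilde f$ in terms of the rescaled interpolation variables, deliberately over-resolve the interpolation in $q$ and $\psi$ (very small $err_q,err_\psi$) so that differentiating the interpolant still yields tight bounds, use Cauchy's integral formula on a stadium of reduced radius to bound $\partial_{\tilde q}\tilde f$ and $\partial_{\tilde\psi}^2\tilde f$ on the contour, and then evaluate the resulting interval enclosures of $\tilde f_k$ and $\tilde f_{\psi\psi}$ over sub-intervals of $J$. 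The main obstacle I expect is precisely this $R_4$ step, and for the same reason flagged after Lemma \ref{lemma:lower:strict1}: the quantities being bounded involve high derivatives of functions with enormous sup-norms ($M_x$ of order $10^{23}$ or worse when $q$ is this close to $1$), so keeping the interval in $\psi$ (and the sub-intervals in $k$) small enough — on the order of $10^{-2}$ or finer — to avoid catastrophic loss of information, while still covering all of $R_4$ in feasible computation time, is delicate. The $R_1$ root-bound argument is the next most subtle point, since it requires the coefficient bounds $|p_k|$ to be simultaneously tight enough that the implied bound on $|\xi|$ falls comfortably below $\xi(i\cdot 0.001\,\omega')$.
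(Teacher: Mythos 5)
Your proposal takes essentially the same route as the paper: verify $\tilde f>0$ on $R_1,R_2,R_3,R_5$ exactly as in Section~\ref{stabmid} (factored form with the polynomial root bound near $\psi=0$, unfactored interpolation farther from $0$), verify $\tilde f_\psi<0$ on $R_6$ and combine it with $\tilde f(k,1)=0$ from Lemma~\ref{lemma:zeros} to get $\tilde f>0$ for $\psi<1$, and on $R_4$ establish $\tilde f_k<0,\ \tilde f_{\psi\psi}>0$ by differentiating the Chebyshev interpolant with error controlled via Cauchy's integral formula on a larger stadium $E_{\rho_1}$ to bound the relevant functions on a smaller $E_{\rho_2}$. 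The only small divergence is an implementation detail: you propose obtaining $\tilde f_{\psi\psi}$ as a second $\tilde\psi$-derivative of the interpolant of $\tilde f$, whereas the paper separately interpolates $\tilde f$ and $\tilde f_\psi$ and takes a single derivative of each interpolant (which yields the tighter first-derivative error bound of \eqref{eq:derbounds} rather than the second-derivative one); also, invoking Lemma~\ref{lemma:lower:strict1} as the ``template'' for $R_4$ is slightly loose, since the defining feature of that lemma --- the Taylor expansion about the zero at $\psi=1$ to resolve a $0/0$ --- is not needed on $R_4=J\times[0.7,0.8]$, and what you actually borrow (over-resolution plus Cauchy-bounded differentiation of the interpolant) is precisely what the paper does.
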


\begin{proof}
We verify that $\tilde f > 0$ for $(k,\psi)\in R_1$, $R_2$, $R_3$, and $R_5$  and that $f_{\psi} <0$ for $(k,\psi)\in R_6$ in the same way we do in Section \ref{stabmid}, except that we have removed the interval $R_4$ from the computation. Since $\tilde f(k,1) = 0$ and $f_{\psi} <0$ in $R_6$, $\tilde f > 0$ for $(k,\psi)\in R_6$, $\psi < 1$.  We break $J$ up into 1500 sub-intervals of two different lengths and we break up the intervals in $\psi$, corresponding to the different regions $R_j$, into 1200-5000 sub-intervals. To verify $\tilde f_k < 0$ and $\tilde f_{\psi\psi} > 0$ for $(k,\psi)\in R_4$, we interpolate $f$ and $f_{\psi}$ and then approximate their derivatives with respect to $\psi$ using the derivative of the interpolating polynomial. We break $J$ into 1001 subintervals for this computation and take $100$ subintervals in $\psi$. The interpolation error bounds for the derivatives are described in Section \ref{chebyshev}. To bound the error, we need an upper bound on $|f_k|$ and $|f_{\psi}|$ on a stadium. We use Cauchy's integral formula on a stadium $E_{\rho_1}$ to obtain the desired bound on the derivatives on a stadium $E_{\rho_2}$ with $1< \rho_2<\rho_1$. 
% See Table TODO for details.
\end{proof}

We are now ready to prove the following lemma.
\begin{lemma}\label{lemma:upper:strict2}
There exists a unique $k_*\in J$ where $J:= [0.9999983,0.99999839]$ such that the stability condition (S1) holds for $\{k<k_*\}\cap J$ and (S1) does not hold for $\{k>k_*\}\cap J$. Further, there is a unique $\psi_*\in[0,1]$ such that $\lambda_1(k_*,\psi_*) = 0$. 
\end{lemma}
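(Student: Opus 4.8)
The plan is to combine the one-variable strict-monotonicity results of Lemma~\ref{lemma:upper:strict1} with the abstract transition machinery of Lemma~\ref{transitionlemma}, in exactly the same spirit as the lower-boundary argument of Lemma~\ref{lemma:lower:strict3}, but now with the roles of stable and unstable endpoints reversed and with the critical Floquet branch being $\alpha = i\psi\omega'$ rather than $\alpha = \omega + i\psi\omega'$. First I would record that, by the analysis of Section~\ref{stabmid} and Section~\ref{gspecial}, stability of the wave at a given $k\in J$ is governed entirely by the sign of $\tilde f(i\psi\omega')$: indeed $\tilde g(i\psi\omega') < 0$ throughout (Lemma of Section~\ref{gspecial}), and $\tilde f(k,1) = 0$ for every $k$ (Lemma~\ref{lemma:zeros}), so $\Re\lambda_1 = \tilde f/\tilde g$ changes sign precisely where $\tilde f$ does. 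Hence the hypothesis of Lemma~\ref{transitionlemma} applies with $h(k,\psi) := -\tilde f(i\psi\omega')$ (so that $h<0$ corresponds to stability), the bad strip being $[\psi_1,\psi_2] = [0.7,0.8]$: by Lemma~\ref{lemma:upper:strict1}, $\tilde f > 0$ on $R_1\cup R_2\cup R_3\cup R_5$ and, using $\tilde f(k,1)=0$ together with $\tilde f_\psi < 0$ on $R_6$, also $\tilde f > 0$ on $R_6$ for $\psi<1$; thus $h = -\tilde f < 0$ for all $(k,\psi)\in J\times([0,1]\setminus(0.7,0.8))$, as required.

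Next I would verify the two sign conditions in parts (i) and (ii) of Lemma~\ref{transitionlemma} on the box $B := R_4 = J\times[0.7,0.8]$. Part~(i) requires $h_k = -\tilde f_k \neq 0$ on $B$, which is exactly the statement $\tilde f_k < 0$ on $R_4$ from Lemma~\ref{lemma:upper:strict1}; so $h_k > 0$ on $B$. Part~(ii) (the convexity condition) requires $\operatorname{sign}(h_{\psi\psi}) = -\operatorname{sign}(h_k)$, i.e. $h_{\psi\psi} = -\tilde f_{\psi\psi} < 0$, which is again supplied by Lemma~\ref{lemma:upper:strict1}, namely $\tilde f_{\psi\psi} > 0$ on $R_4$. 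It remains to check the endpoint hypothesis of Lemma~\ref{transitionlemma}, that stability holds at one end of $J$ and fails at the other: stability at $k = 0.9999983$ is established in Section~\ref{stabmid} (it lies in the middle stability interval $[0.9426,0.99999983]$), and instability at $k = 0.99999839$ is established in Section~\ref{upunstab} (Lemma~\ref{lemma:upper:instability}), so indeed $h(0.9999983,\cdot)<0$ on $[0,1]$ while $h(0.99999839,\psi_0)>0$ for some $\psi_0\in[0.6,0.8]$.

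With all hypotheses of Lemma~\ref{transitionlemma}(ii) in hand, I would invoke it directly: there is a unique transition point $k_*$ with $0.9999983 < k_* < 0.99999839$ such that (S1) holds for $\{k<k_*\}\cap J$ and fails for $\{k>k_*\}\cap J$, together with a unique $\psi_*\in[0.7,0.8]\subset[0,1]$ characterized by $F := (\tilde f, \tilde f_\psi) = (0,0)$ at $(k_*,\psi_*)$; outside the strip $(0.7,0.8)$ the function $\tilde f$ is strictly positive on all of $J$, so no other zero of $\tilde f$ can occur, giving uniqueness of $\psi_*$ globally in $\psi$. (One minor point to spell out: Lemma~\ref{transitionlemma} as stated gives $\psi_1 \le \psi_* \le \psi_2$, and the strict positivity of $\tilde f$ on $R_1\cup R_2\cup R_3$ and on $R_5\cup R_6$ forces $\psi_*\in(\psi_1,\psi_2)$, hence $\psi_*$ is interior.) I would also, as in the lower-boundary case, remark that the a posteriori estimate \eqref{aposteriori} then lets one localize $(k_*,\psi_*)$ to any desired precision by evaluating $F$ and a bound on $|dF^{-1}|$ over $B$ with interval arithmetic. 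The main obstacle here is not conceptual — the abstract lemma does all the topological work — but rather the quantitative interval-arithmetic verification underlying Lemma~\ref{lemma:upper:strict1}, particularly establishing $\tilde f_k < 0$ and $\tilde f_{\psi\psi} > 0$ simultaneously on $R_4$: as the analogous discussion after Lemma~\ref{lemma:lower:strict1} makes clear, the competing terms in the quotient-rule expansion of these mixed derivatives are of wildly different magnitudes near $k\to 1$ (where $q\to 1$ and the theta-series bounds degrade), so the sub-intervals in both $k$ and $\psi$ must be taken extremely fine to keep the interval enclosures from ballooning past zero.
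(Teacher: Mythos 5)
Your proposal follows the paper's argument almost exactly: reduce (S1) on the branch $\alpha = i\psi\omega'$ to the sign of $-\tilde f$ (using $\tilde g < 0$ from Section~\ref{gspecial} and $\tilde f(k,1)=0$ from Lemma~\ref{lemma:zeros}), feed the one-variable sign and convexity facts from Lemma~\ref{lemma:upper:strict1} into the abstract transition Lemma~\ref{transitionlemma}(ii) with $h = -\tilde f$ and bad strip $R_4 = J\times[0.7,0.8]$, check endpoints (stable at $k = 0.9999983$, unstable at $k = 0.99999839$), and conclude. Your unpacking of which region gives which sign condition, and your remark on the a~posteriori estimate \eqref{aposteriori}, are both faithful to the paper.

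There is one genuine gap. You claim that ``stability of the wave at a given $k \in J$ is governed entirely by the sign of $\tilde f(i\psi\omega')$,'' attributing this to the analysis of Section~\ref{stabmid}. But Section~\ref{stabmid} (Lemma~\ref{lemma:middle:stability}) only covers $k \le 0.9999983$, i.e.\ the \emph{left} endpoint of $J$. For $k$ in the interior of $J = [0.9999983,\,0.99999839]$ nothing you have cited rules out a sign change of $\lambda_1$ on the other Floquet branch $\alpha = \omega + i\psi\omega'$, and if that branch became unstable inside $J$ the reduction to $-\tilde f(i\psi\omega')$ would be invalid. The paper's proof opens precisely with this extra step: ``We verify as in Section~\ref{stabmid}, that $\lambda_1 < 0$ for $\alpha = \omega + i\psi\omega'$'' — a fresh interval-arithmetic verification over all of $J$, done by the same method but not subsumed in the earlier lemma. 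You need to state (and, in a rigorous numerics setting, carry out) that verification before the problem can be localized to the $i\psi\omega'$ branch. Once that is added, the rest of your argument is correct and matches the paper.
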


\begin{proof} We verify as in Section \ref{stabmid}, that $\lambda_1 < 0$ for $\alpha = \omega + i\psi \omega'$. Consider the case $\alpha = i\psi \omega'$. From Lemma \ref{gspecial}, we have that $\tilde g < 0$ for $\psi\in (0,1)$. Thus, the stability condition (S1) depends on the sign of $\tilde f$ with $-\tilde f < 0$ corresponding to stability. We verify in Section \ref{stabmid} that (S1) holds at $k = 0.9999983$, and in Section \ref{upunstab} that (S1) does not hold at $k = 0.99999839$. The Lemma then follows from \ref{lemma:upper:strict1} and the the application of Lemma \ref{transitionlemma} to $-\tilde f$.
\end{proof}

Now that we know the transition from stability to instability is strict, we approximate the transition to higher accuracy.

\begin{lemma}
For $k =0.999998385205026$ and $\psi\in[0.7,0.8]$ where $\alpha = i\psi\omega'$, $\tilde f > 0$. When $k = 0.999998385263233$, $\tilde f<0$ for some value of $\psi\in[0.7,0.8]$.
\label{lemma:strict:upper:pinpoint}
\end{lemma}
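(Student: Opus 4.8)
The plan is a direct rigorous interval-arithmetic verification at the two prescribed values of $k$, reusing the machinery already built for the case $\alpha=i\psi\omega'$ in Section~\ref{stabmid} (and earlier in Lemma~\ref{p:stab}). Fix $k$ to be one of the two stated numbers, enclosed in a tight machine interval, and compute the induced quantities $q(k)$, $K(k)$, $\kappa(k)$, $\omega$, $\omega'$ as narrow intervals. Since $k$ is now a constant, the integrals defining $\tilde f(i\psi\omega')$ in \eqref{innerprod2} require only one-dimensional Chebyshev interpolation, in the spatial variable $x$, of the analytic coefficient functions $c_j(x)$, $h_j(x)$ of \eqref{factored}; the error bound \eqref{interp-bound2} on a stadium $E_{\rho_x}$ with $\rho_x$ chosen small enough to avoid the zeros of $\vartheta_1$ (exactly as in the proof of Lemma~\ref{p:stab}) makes this rigorous. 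Because $\psi\in[0.7,0.8]$ stays well away from the singularity of $\xi(i\psi\omega')$ at $\psi=0$, I keep the polynomial-in-$\xi$ factored form $\tilde f=\sum_{j=0}^{5}p_j(\psi)\,(i\xi(i\psi\omega'))^{j}$ only for numerical conditioning, with $\xi(i\psi\omega')$ enclosed via the $q$-series for the Weierstrass zeta function as in \eqref{some_defs}.

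To obtain a definite sign of $\tilde f$ across $\psi\in[0.7,0.8]$, I partition $[0.7,0.8]$ into many small subintervals and, on each, evaluate the interpolant and its $\xi$-expansion using the Taylor-remainder trick of Section~\ref{evaluate} (Taylor-expand in the angular variable, bound the remainder on the full subinterval), which keeps the output interval narrow. For $k=0.999998385205026$ one then checks that the lower endpoint of the resulting enclosure of $\tilde f$ is strictly positive on every subinterval, so $\tilde f>0$ on all of $[0.7,0.8]$; for $k=0.999998385263233$ it suffices to exhibit one subinterval $[\psi_-,\psi_+]\subset[0.7,0.8]$ on which the upper endpoint of the enclosure is strictly negative, so $\tilde f<0$ there. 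These are precisely the two assertions of the lemma; together with Lemma~\ref{lemma:upper:strict2} and the sign $\tilde f_k<0$ on $R_4$ from Lemma~\ref{lemma:upper:strict1} they localize the transition $k_*$ to $(0.999998385205026,\,0.999998385263233)$, though that conclusion is not part of the statement itself.

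The main obstacle is margin. The transition $k_*$ is being squeezed into an interval of width only about $6\times10^{-11}$ near $k=1$, so at both prescribed values $\min_{\psi\in[0.7,0.8]}\tilde f$ is extremely small — barely positive on the left, barely negative on the right — and the interval enclosure of $\tilde f$ must be driven well below this margin. This requires simultaneously: (i) taking enough Chebyshev nodes $N_x$ that the $x$-integration error sits far below the margin, which is feasible because, although the sup-norm bounds $M_x$ of the integrands are astronomically large (cf.\ Tables~\ref{tableB}--\ref{tableD}), analytic interpolation on $E_{\rho_x}$ still gives exponentially small error; (ii) subdividing $[0.7,0.8]$ finely enough, and using the Taylor-expansion evaluation, that interval blow-up when summing the Chebyshev series and forming the degree-$5$ polynomial in $\xi$ does not overwhelm the sign; and (iii) controlling propagation of the (tight, but nonzero) input interval for $k$ through the steep maps $k\mapsto q,K,\kappa,\omega,\omega'\mapsto\tilde f$, since $k$ lies within $1.6\times10^{-6}$ of $1$. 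No new analytic input is needed beyond Lemmas~\ref{gspecial} and~\ref{lemma:zeros} and the interpolation apparatus of Section~\ref{chebyshev}; the work lies in choosing the discretization parameters so that the INTLAB computation returns intervals of the required sign.
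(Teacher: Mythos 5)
Your proposal is correct, but it takes a genuinely different route from the paper. The paper's own proof is a one-sentence statement that the two checks are done \emph{directly} with interval arithmetic, \emph{without} analytic interpolation, referring to the documentation \cite{Bdoc} for details. You instead reuse the full Chebyshev machinery of Section~\ref{chebyshev}: one-dimensional interpolation in $x$ on a stadium $E_{\rho_x}$, sup-norm bounds $M_x$, the $\xi$-factored polynomial form, and the Taylor-in-$\theta$ evaluation trick. Both routes are sound. The paper's direct route is attractive precisely because $k$ is now a single fixed number (not a parameter range), so there is no need to amortize the cost of building a polynomial surrogate over many $k$; the integrals in \eqref{innerprod2} can simply be enclosed on a fine $\psi$-mesh without first establishing $M_x$ on a complex stadium or choosing $\rho_x$ to clear the zeros of $\vartheta_1$. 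Your interpolation route carries that extra overhead but buys exponentially small $x$-quadrature error, which could matter if the sign margin were even narrower. You also correctly identify the genuine difficulty — the two $k$-values straddle $k_*$ within about $6\times 10^{-11}$, so $\min_{\psi}\tilde f$ is small at both endpoints and careful control of the input interval for $k$ through the steep maps $k\mapsto q,K,\kappa,\omega,\omega'$ is essential; the paper implicitly handles this by choosing the two $k$-values via bisection until the direct interval evaluation returns a definite sign, as noted in the remark following the lemma. One small quibble: you lean on the factored form $\tilde f=\sum_j p_j\,(i\xi)^j$ for conditioning, but on $\psi\in[0.7,0.8]$, far from the $\psi=0$ singularity of $\xi$, the unfactored integrands are already well-behaved, so this precaution is harmless but unnecessary here.
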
 

\begin{proof}
When $k =0.999998385205026$ and $\alpha = i\psi\omega'$, we verify directly (without analytic interpolation) with interval arithmetic that $\tilde f(i\psi\omega') > 0$ for $\psi \in [0.7,0.8]$. When $k = 0.999998385263233$, we verify directly that $\tilde f < 0$ for some value of $\psi\in[0.7,0.8]$. See Section upper\_pinpoint of \cite{Bdoc} for details.
\end{proof}

{\textbf Remark.} We used a bisection scheme to chose the values of $k$ to verify in Lemma\ref{lemma:strict:upper:pinpoint}; see Section upper\_pinpoint of \cite{Bdoc} for details.

% --------------------------------------------------------------
% Proof of Main Theorem
% --------------------------------------------------------------

\section{Proof of the Main Theorem} \label{mainproof}

Collecting the above results, we may now give a proof of the main theorem stated in Section \ref{mainresult}.

\begin{proof}[Proof of Theorem \ref{thm:main}]
The stability condition (S1) does not hold for 
\begin{center}
$k\in H_l:=$ [\kUnstableLowerLeft,\ \kUnstableLowerRight] 
\end{center}
or
\begin{center}
 $k\in H_r:= [$0.99999839,\ \kUnstableUpperRight] 
 \end{center}
by lemma \ref{lemma:lower:instability} and \ref{lemma:upper:instability} respectively. By Lemma \ref{lemma:middle:stability}, the stability condition (S1) holds for $k\in H_m:=$ [\kStableLeft,0.9999983]. The transition of the stability condition (S1) is strict between $H_l$ and $H_m$ and between $H_m$ and $H_r$ by lemma \ref{lemma:lower:strict3} and \ref{lemma:upper:strict2} respectively. By the Lemmata \ref{lemma:upper:strict1} and \ref{lemma:upper:strict2} and their proofs, Lemma \ref{lemma:strict:upper:pinpoint} implies that the stability condition (S1) holds for $k \in [\sup H_m,0.999998385205026]$ and does not hold for $k\in [ 0.999998385263233,\inf H_u]$.

In Lemma \ref{lemma:middle:simplicity} we verify that assumption (A1) holds for $k\in[0.942,0.9999984]$ and in Lemma \ref{lemma:middle:distinct} we verify that assumption (A2) holds for $k\in[0.9,0.9999995]$. Hence, by Proposition \ref{p:kdvstab}, the periodic traveling-wave solutions of \eqref{gks} described in Proposition \ref{p:kdvsolnexpand} are spectrally, thus nonlinearly, stable for $k\in[k_l,k_r]$ where $\sup H_l < k_l< \inf H_m$ and $0.99999838520< k_r < 0.99979838526$.
\end{proof}

This completes our numerical verification of stability of periodic traveling-wave solutions of the Kuramoto-Sivashinsky equation in the Korteweg-de Vries limit.

\medskip
{\bf  Acknowledgment:} We thank Mathew Johnson, Norm Levenberg, and especially Kevin Zumbrun for helpful conversations. All computations were carried out using STABLAB developed by Jeffrey Humpherys, Kevin Zumbrun, and the author. Interval arithmetic computations were carried out using version 6 of INTLAB developed by Siegfried M. Rump, head of the Institute for Scientific Computing
at the Hamburg University of Technology, Germany.  
%
%------------------------------------------------------
% Appendix A, Weierstrass functions
%------------------------------------------------------
%
\appendix

\section{Computing environment}

All computations were carried out in at least one of the two computational environments described below.

Some computations were carried out on a MacBook laptop with 2GB memory and a duo core Intel processor with 2GHz processing speed. The software environment consists of MAC OS X 10.5.8, MATLAB R2008b, STABLAB 2013, and INTLAB 6.

Some computations were carried out on a 2009 Mac Pro with 16GB memory and two quad-core intel processors with 2.26 GHz processing speed. The software environment consists of OS X 10.8.4, MATLAB R2013b, STABLAB 2013, and INTLAB 6.

\section{Weierstrass functions}

Here we gather some useful properties of the Weierstrass functions and the theta function. The Weierstrass functions include the Weierstrass elliptic function $\wp(z)$ and it's derivative $\wp'(z)$, the Weierstrass zeta function $\zeta(z)$, and the Weierstrass sigma function $\sigma(z)$. See \cite{WR} and \cite{GR} for more properties.

%
% --- > Formulation in terms of Weierstrass theta functions
% 

%\subsection{Theta function formulation}

%The theta function has the q-series representation,
%\begin{equation}
%\vartheta_1(z) = 2\sum_{n=1}^{\infty} (-1)^{n+1}q^{(n-1/2)^2}\sin((2n-1)z)\\
%%\vartheta_2(z)&= 2\sum_{n=1}^{\infty} q^{(n-1/2)^2}\cos((2n-1)z)\\
%%\vartheta_3(z) &= 1+2\sum_{n=1}^{\infty} q^{n^2}\cos(2nz)\\
%%\vartheta_4(z)& = 1+2\sum_{n=1}^{\infty} (-1)^nq^{n^2}\cos(2nz).
%\label{theta_fun}
%\end{equation}
%

The Weierstrass sigma functions have the following representations in terms of the theta functions:
\begin{equation}
\begin{split}
\sigma(z)&= \frac{2\omega \vartheta_1(\pi z/2\omega)}{\pi \vartheta_1'(0)}e^{\eta_1z^2/2\omega},\quad
\zeta(z)= \frac{\eta_1 z}{\omega} + \frac{\pi\vartheta_1'(\pi z/2\omega)}{2\omega\vartheta_1(\pi z/2\omega)}\\
\wp(z)&= \wp(\omega) + \left( \frac{\pi \vartheta_1'(0)\vartheta_2(\pi z/2\omega)}{2\omega \vartheta_2(0)\vartheta_1(\pi z/2\omega) } \right)^2 ,\quad
\wp'(z)= \frac{-\pi^3\vartheta_1'(0)^2}{4\omega^3}\frac{\vartheta_2(\pi z/2\omega)\vartheta_3(\pi z/2\omega) \vartheta_4(\pi z/2\omega)}{\vartheta_1^3(\pi z/2\omega)}.
\end{split}
\label{w_of_theta}
\end{equation}
% I verified on Wolfram's web page and the reference book by I. S. Gradshteyn and I. M. Ryzhik that the above definitions for $\sigma, \zeta, and \wp are correct. I differentiated \wp to get \wp, used the identity that v_1'v_2 - v_1v_2' = v_2(0)^2v_3v_4 and that v_1'(0) = v_2(0)v_3(0)v_4(0). I verified this is equivalent to what is on Wolfram. The identity v_1'(0) = v_2(0)v_3(0)v_4(0) is in the reference book and the identity v_1'v_2 - v_1v_2' = v_2(0)^2v_3v_4 is on the Wolfram web page. 1 June 2013

%
% --- > q-series representation
% 

\subsection{The q-series representation} The Weierstrass elliptic functions have the following q-series representations:

\begin{equation}
\begin{split}
q&= e^{-\pi  \omega'/\omega},\quad
\eta_1  = \zeta(\omega),\quad
\eta_1= \frac{\pi^2}{12 \omega}-\frac{2\pi^2}{\omega}\sum_{k=1}^{\infty} \frac{kq^{2k}}{1-q^{2k}}\\
p(z) &= \frac{-\eta_1}{\omega} + \left(\frac{\pi}{2\omega}\right)^2\csc^2\left(\frac{\pi z}{2\omega}\right) - \frac{2\pi^2}{\omega^2}\sum_{k=1}^{\infty} \frac{kq^{2k}}{1-q^{2k}}\cos\left(\frac{k\pi z}{\omega}\right)\\
p(z+i\omega')&= -\frac{\eta_1}{\omega}-\frac{2\pi^2}{\omega^2}\sum_{k=1}^{\infty}\frac{kq^k}{1-q^{2k}}\cos\left(\frac{k\pi z}{\omega}\right)\\
\vartheta_1(z) &= 2\sum_{n=1}^{\infty} (-1)^{n+1}q^{(n-1/2)^2}\sin((2n-1)z)\\
p(z+\omega)&= -\frac{\eta_1}{\omega}+\left(\frac{\pi}{2\omega}\right)^2\sec^2\left(\frac{\pi z}{2\omega}\right) - \frac{2\pi^2}{\omega^2}\sum_{k=1}^{\infty} (-1)^k
\frac{kq^{2k}}{1-q^{2k}} \cos\left(\frac{k\pi z}{\omega}\right)\\
p'(z) &= \frac{-\pi^3}{4\omega^3}\cot\left(\frac{\pi z}{2\omega}\right) \csc^2\left(\frac{\pi z}{2\omega}\right)+ \frac{2\pi^3}{\omega^3}\sum_{k=1}^{\infty} \frac{k^2q^{2k}}{1-q^{2k}}\sin\left(\frac{k\pi z}{\omega}\right)\\
p'(z+i\omega')&= \frac{2\pi^3}{\omega^3}\sum_{k=1}^{\infty} \frac{k^2q^k}{1-q^{2k}}\sin\left(\frac{k\pi z}{\omega}\right)\\
\zeta(z)&= \frac{\eta_1 z}{\omega} + \frac{\pi }{2\omega} \cot\left(\frac{\pi z}{2\omega}\right) + \frac{2\pi }{\omega}\sum_{k=1}^{\infty} \frac{q^{2k}}{1-q^{2k}}\sin\left(\frac{k\pi z}{\omega}\right)\\
\sigma(z) &= \frac{2\omega}{\pi} + \sin\left(\frac{\pi z}{2\omega}\right) + \exp\left( \frac{\eta_1 z^2}{2\omega} + 4\sum_{k=1}^{\infty} \frac{q^{2k}}{k(1-q^{2k})}\sin^2\left(\frac{k\pi z}{2\omega}\right)\right).
\end{split}
\label{qseries}
\end{equation}

%
% --- > periodicity
%

\subsection{Properties}\label{properties}
For $m,n \in \Z$, the following hold:
\begin{equation}
\begin{split}
p(z+2m\omega+2ni\omega') &= p(z),\quad
p'(z+2m\omega+2ni\omega') = p'(z),\\
\sigma(z+2m\omega + 2ni\omega')&= (-1)^{mn+m+n}e^{2(m\zeta(\omega)+n\zeta(i\omega'))(z+m\omega+ni\omega')}\sigma(z),\\
\zeta(z+2m\omega+2ni\omega')&= \zeta(z)+2m\zeta(\omega)+2n\zeta(i\omega').
\end{split}
\label{prop:periodicity}
\end{equation}

%
% --- > Odd function
%
The following oddness properties hold: 
$
p'(-z) = -p'(z),\
\sigma(-z) = -\sigma(z),\
\zeta(-z) = -\zeta(z).
$

%
% --- > addition formula
%

The following addition formulae hold:
\begin{equation}
\begin{split}
\sigma(z_1+z_2)\sigma(z_1-z_2)&= -\sigma^2(z_1)\sigma^2(z_2)(\wp(z_1)-\wp(z_2)),\\
\zeta(z_1+z_2)&= \zeta(z_1)+\zeta(z_2)+\frac{1}{2}\frac{p'(z_1)-p'(z_2)}{p(z_1)-p(z_2)}.
\end{split}
\label{prop:addition}
\end{equation}

The following identities hold:
$
\zeta(i\omega')\omega - i\omega'\zeta(\omega)= -\frac{\pi i}{2},
$
$
\overline{p(z)}= p(\bar z)$,
$\overline{p'(z)}= p'(\bar z)$,
$
p'(\omega)=0$,
$p'(i\omega')=0$,
$p(\omega)= \frac{3g_3}{g_2}$,
$p(i\omega')=-\frac{3g_3}{g_2}$.

%
% --- > derivatives
%

The Weierstrass elliptic functions have the following relationship with respect to their derivatives:
$
\frac{\partial}{\partial z} \wp(z)= \wp'(z)$,
$\frac{\partial}{\partial z} \wp'(z)= 6\wp^2(z)-\frac{g_2}{2}$,
$\frac{\partial}{\partial z} \sigma(z)= \sigma(z)\zeta(z)$,
$\frac{\partial}{\partial z} \zeta(z)= -\wp(z)$,
where $g_2$ and $g_3$ are the Weierstrass elliptic function invariants.

\section{Stability for large $\xi$}

Note that $\zeta(z)$ is analytic in a neighborhood containing the vertical line $l_{k} := \{  z = \omega + i\beta\ |\  \beta \in [0, \omega']\}$. Hence, $\xi(\alpha)$ is bounded for $\alpha \in l_k$.

 We now show that in the case $\alpha = i\beta$, $\beta \in (0,\omega']$, that $\xi(\alpha)$ can not correspond to $\lambda_1 > 0$ for $\beta \in (0,\rho)$ for some $\rho > 0$. Recall that $\xi(i\beta)\to \infty$ as $\beta \to 0+$ and $\xi(i\beta)$ is decreasing as a function of $\beta$.
Remembering the definition of $w(x,\alpha)$ from \eqref{vandder}, we note that $w(x,0)=1$ and $w(x,\alpha)$ is analytic in a region of $\alpha = 0$. Hence, there exists $\rho >0$ and $M_L>0$, $M_k>0$, $k = 0,1,...,4$ such that whenever $0<\beta \leq \rho$, $|w(x,i\beta)| \geq M_L$ and $|w^{(k)}(x,i\beta)| \leq M_k$ for all $x\in [0,X]$. Letting $y(\tilde x) := X/2 + X\tilde x/2$, we see

\eq{
\int_{-1}^1 v(y)\bar v'(y) d\tilde x &= \int_{-1}^1 w(y)(\bar w'(y)-i\xi \bar w(y))dx,\\
&= \int_{-1}^1 w(y)\bar w'(y) - (i\xi)|w(y)|^2d\tilde x,
}{}
and so 

\eq{
\Im\left(\int_0^X v(x)\bar v'(x) dx \right)& \leq 2( M_0M_1 - \xi M_L^2).
}{\label{numer_bound}}

Similarly,

\eq{
\Im\left(\int_{-1}^1 (v''(y)+v''''(y))\bar v'(y) d\tilde x\right)&\leq -M_L^2\xi^5 + \xi^4M_0M_1 +\\
& (M_1+M_0\xi)(4M_1\xi^3+6M_2\xi^2+4M_3 \xi + M_4 + M_0 \xi^2 + 2M_1 \xi + M_2).
}{\label{denom_bound}}

Let $\xi_0> 0$ such that the right hand side of \eqref{numer_bound} and \eqref{denom_bound} are negative for $\xi \geq \xi_0$. Let $\rho :=-i \xi^{-1}(\xi_0)$. Note that $\lambda_1(i\rho) < 0$ Then for $0<\beta\leq \rho$, $\xi(i\beta)$ does not correspond to instability.

%------------------------------------------------------
% Bibliography
%------------------------------------------------------

%--------------------------------------------------------


\begin{thebibliography}{BHLyZ1}

\bibitem{Ba} N. K. Basu, 
{\it On Double Chebyshev Series Approximation},
SIAM Journal on Numerical Analysis, V 10 No. 3, pg 496-505, 1973.

\bibitem{BN} Doron E. Bar and Alexander A. Nepomnyashchy,
{\it Stability of periodic waves governed by the modified Kawahara equation},
Physica D, V 86, pg 586-602, 1995.

\bibitem{Bdoc} B. Barker,
{\it STABLAB Documentation for KdV : Numerical proof of stability of roll waves in the small-amplitude limit for inclined thin film flow}, Available at \url{http://arxiv.org/archive/math}. See also \url{http://pages.iu.edu/~kzumbrun/}.

\bibitem{BJNRZ0}
B. Barker, M.A. Johnson, P. Noble, L.M. Rodrigues, K. Zumbrun.
{\it Stability of Periodic Kuramoto-Sivashinsky Waves }, Applied Math. Letters 25 (2012) no 5, 824-829.

\bibitem{BJNRZ1}
B. {Barker}, M. A. {Johnson}, P. {Noble}, L. M. {Rodrigues}, and
  K. {Zumbrun}.
\newblock Nonlinear modulational stability of periodic traveling-wave solutions of the generalized Kuramoto-Sivashinsky equation.
\newblock \emph{Physica D}, 258 no. 1: 11-46, 2013.

\bibitem{BJNRZ2} B.~Barker, M. Johnson,
P. Noble, M. Rodrigues, and K.~Zumbrun,
{\it Whitham averaged equations and modulational stability
of periodic solutions of hyperbolic-parabolic balance laws,}
%Proceedings, French GDR meeting on EDP, Port D'Albret, France;
%preprint.
Journ\'ees \'equations aux d\'eriv\'ees partielles (2011), Exp. No. 3.

\bibitem{BJNRZ4} B.~Barker, M. Johnson,
P. Noble, M. Rodrigues, and K.~Zumbrun,
{\it Efficient numerical evaluation of the periodic Evans function of Gardner 
and spectral stability of periodic viscous roll waves}, in preparation.

\bibitem{BJNRZ5} B. Barker, M. Johnson, P. Noble, M. Rodrigues, K. Zumbrun,
{\it Witham averaged equations and modulational stability of periodic solutions of hyperbolic-parabolic balance laws}, 
Proceedings, French GDR meeting on EDP, Port D'Albret, France.

\bibitem{BD}
N. Bottman and B. Deconinck.
KdV cnoidal waves are linearly stable. \emph{Discrete Contin. Dyn. Syst.} 25 (4): 1163-1180, 2009.

\bibitem{BHZ2} B. Barker, J. Humpherys, and K. Zumbrun, \textit{STABLAB: A MATLAB-based numerical library for Evans function computation}, Available at: \url{http://impact.byu.edu/stablab/}

\bibitem{BBCL} T. Bloom, L. P. Bos, J.-P. Calvi, and N. Levenberg,
{\it Polynomial interpolation and approximation in $\C^d$},
 \url{http://arxiv.org/pdf/1111.6418v1.pdf}.

\bibitem{BrJK}
J. C. Bronski, M. A. Johnson, and T. Kapitula. An Index Theorem for the Stability of Periodic Traveling Waves of KdV Type.
Proc. Roy. Soc. Edinburg Sect. A 141 (6): 1141-1173, 2011.

\bibitem{CD} H-C. Chang and E.A. Demekhin,
{\it  Complex wave dynamics on thin films,} (Elsevier, 2002).

\bibitem{CDK}
H.C. Chang, E.A. Demekhin, D.I. Kopelevich.
\newblock Laminarizing effects of dispersion in an active-dissipative nonlinear medium.
\newblock {\em Phys. D} 63: 299--320, 1993.

\bibitem{DY}
L. Demanet, L. Ying,
{\it On Chebyshev interpolation of analytic functions}, MIT technical report, March 2010.

\bibitem{EMR}
N.M. Ercolani, D.W. McLaughlin and H. Roitner,
{\it Attractors and transients for a perturbed periodic {K}d{V} equation:
  a nonlinear spectral analysis}, J. Nonlinear Sci. 3(4):477-539 (1993).

\bibitem{FST} U. Frisch, Z.S. She, and O. Thual,
{\it Viscoelastic behaviour of cellular solutions
to the Kuramoto--Sivashinsky model} J. Fluid Mech. 168 (198) 221--240.

\bibitem{G} R. Gardner, {\it On the structure of the spectra of
periodic traveling waves}, J. Math. Pures Appl. 72 (1993), 415-439.

\bibitem{Gu} R. Gunttner, 
{\it Evaluation of Lebesgue Constants},
SIAM Journal on Numerical Analysis, Vol 17, No 4 (1980), 512-520.

\bibitem{GAO} {\it PATRIOT MISSILE DEFENSE:
Software Problem Led to System Failure at Dhahran, Saudi Arabia}, Government Accountability Office report, available at \url{http://www.gao.gov/products/IMTEC-92-26}.

\bibitem{Gu} J. Guyker, 
{\it An inequality for Chebyshev connection coefficients}, 
J. Ineq. Pure and Applied Math 7(2) Art 67, 2006.

\bibitem{IB} {\it Ariane 5 Flight 501 Failure}, Report by the inquiry board, available at \url{http://esamultimedia.esa.int/docs/esa-x-1819eng.pdf}.

\bibitem{GR} I. S. Gradshteyn and I. M. Ryzhik,
{\it Table of integrals, series, and products},
Copyright 2007, Elseveir, Inc.

\bibitem{JNRZ} Mathew A. Johnson, Pascal Noble, L. Miguel Rodriguez, Kevin Zumbrun,
{\it Spectral stability of periodic wave trains of the Korteweg-de Vries/Kuramoto-Sivashinksy equation in the Korteweg-de Vries limit}, Transactions of the AMS, to appear.

\bibitem{JNRZ2} M. A. Johnson, P. Noble, L.M. Rodrigues, K. Zumbrun
{\it Behaviour of periodic solutions of viscous conservation laws under localized and nonlocalized perturbations.} to appear, Inventiones Math.

\bibitem{JZ1} M. Johnson and K. Zumbrun,
{\it Nonlinear stability and asymptotic behavior of periodic traveling waves
of multidimensional viscous conservation laws in dimensions one and two},
SIAM J. Appl. Dyn. Syst. 10 (2011), no. 1, 189--211. 

\bibitem{JZ2} M. Johnson and K. Zumbrun,
{\it Nonlinear stability of periodic traveling waves
of viscous conservation laws in the generic case},
Journal of Differential Equations, 249 no. 5 (2010), 1213-1240.

\bibitem{JZN} M. Johnson, K. Zumbrun, and P. Noble,
{\it Nonlinear stability of viscous roll waves,}
SIAM Journal on Mathematical Analysis, 43 (2011) no. 2, 577-611.

\bibitem{JNRZ1} M. A. Johnson, P. Noble, L.M. Rodrigues, K. Zumbrun
{\it Spectral stability of periodic wave trains of the Korteweg-de Vries/Kuramoto-Sivashinsky equation in the Korteweg-de Vries limit}, submitted (2012) arXiv:1202.6402.

\bibitem{K} Y. Kuramoto,
{\it Chemical oscillations, waves, and turbulence,}
Springer-Verlag, Berlin, 1984, 164 p.

\bibitem{M} J.C. Mason,
{\it Near-Best Multivariate Approximation by Fourier series, Chebyshev series and Chebyshev interpolation},
Journal of Approximation Theory, 28, 349-358 (1980).

\bibitem{KE} P. Kent and J. Elgin,
{\it Traveling-waves of the Kuramoto-Sivashinsky equation: period-multiplying bifurcations. }
Nonlinearity 5 (1992) no. 4, 899-919.


\bibitem{Kh} Boris N. Khoromskij,
University/ETH Z\"{u}rich, Pro∗Doc Program, WS 2010 lecture notes.

\bibitem{KSF}
E. A. Kuznetsov, M. D. Spector, and G. E. Fal'kovich. On the stability of nonlinear waves
in integrable models. \emph{Phys. D} 10: 379--386, 1984.

\bibitem{KT} Y. Kuramoto and T. Tsuzuki,
{\it On the formation of dissipative structures in reaction-diffusion systems,}
Progress of Theoretical Physics, 1975. 54:3.

\bibitem{N} Alexander A. Nepomnyashchy, 
{\it Kawahara equation},
Trans. Penn State Univ. 362 (1976) 114 [in Russian].

\bibitem{PSU} R. Pego, H. Schneider, and H. Uecker,
{\it Long-time persistence of Korteweg-de Vries
solitons as transient dynamics in a model of inclined film flow,}
Proc. Royal Soc. Edinburg 137A (2007), 133--146.

\bibitem{WR} 
{\it The best-known properties and formulas for Weierstrass functions and inverses}, 1998-2013 Wolfram Research, Inc., 
\begin{tiny}\url{http://functions.wolfram.com/EllipticFunctions/WeierstrassSigma4/introductions/Weierstrass/05/} \end{tiny}

\bibitem{R} S. M. Rump, 
{\it INTLAB-INTerval LABoratory},
in: Tibor Csendes (ed.), {\it Developments in Reliable Computing, Kluwer Academic Publishers}, Dordrecht, Netherlands, 1999, pp. 77-105.   

\bibitem{Rep} 
{\it ICERM Reproducibility in Computational and Experimental Mathematics: Readings and References},
Web page describing the proceedings of the ICERM conference on Reproducibility in Computational and Experimental Mathematics held in December 2013, available at \url{http://wiki.stodden.net/ICERM_Reproducibility_in_Computational_and_Experimental_Mathematics:_Readings_and_References}.

\bibitem{RW} S. C. Reddy and J. A. C. Weideman,
{\it The accuracy of the Chebyshev differencing method for analytic functions},
Siam Journal on Numerical Analysis 42(5) pg 2176-2184, 2005.

\bibitem{S} M. D. Spector,
{\it Stability of conoidal waves in media with positive and negative dispersion},
Sov. Phys. JETP 67(1) pg 104, 1988.

\bibitem{S1} G.I. Sivashinsky,
{\it  Nonlinear analysis of hydrodynamic instability in laminar flame. I. Derivation of basic equations,}
Acta Astron. 4 (1977) no. 11-12, 1177--1206.

\bibitem{S2} G.I. Sivashinsky,
{\it  Instabilities, Pattern Formation, and Turbulence in Flames},
Annual Review of Fluid Mechanics, January 1983. 15. Pp.179-199.

\bibitem{Sc1} G. Schneider, {\it Nonlinear diffusive stability
of spatially periodic solutions-- abstract theorem and higher space
dimensions},
Proceedings of the International Conference on Asymptotics
in Nonlinear Diffusive Systems (Sendai, 1997),  159--167,
Tohoku Math. Publ., 8, Tohoku Univ., Sendai, 1998.

\bibitem{Sc2} G. Schneider,
{\it Diffusive stability of spatial periodic solutions of the
Swift-Hohenberg equation,} (English. English summary)
Comm. Math. Phys. 178 (1996), no. 3, 679--702.

\bibitem{T} Eitan Tadmor,
{\it The Exponential Accuracy of Fourier and Chebyshev Differencing Methods},
SIAM Journal on Numerical Analysis, Volume 23, Issue 1 (Feb., 1986), 1-10.

\bibitem{W} H. A. Win,
{\it Model equation of surface waves of viscous fluid down an inclined plane},
J. Math. Kyoto Univ. 33 (1993) no. 3, 803-824.

\bibitem{YY} J. Yu, Y. Yang,
{\it Evolution of small periodic disturbances into roll waves in channel flow with internal dissipation},
Stud. Appl. Math. 111 (2003) no. 1, 1-27.

\bibitem{ZH}K. Zumbrun and P. Howard,
 {\it Pointwise semigroup methods and stability of viscous shock waves},
Indiana University Mathematics Journal, 47(3) pg 741-871, 1998.

\end{thebibliography}
\end{document}